\documentclass[reqno]{amsart}

\usepackage[alphabetic]{amsrefs}

\usepackage{amssymb}

\usepackage[all,cmtip]{xy}

\PassOptionsToPackage{usenames,dvipsnames}{xcolor}
\usepackage[inline]{enumitem}
\usepackage{mathtools}
\usepackage{xfrac}
\usepackage{bm}
\usepackage{tkz-euclide}
\usepackage{tikz,xifthen}
\usepackage{tikz-cd}
\usepackage{mathrsfs}
\usetikzlibrary{arrows}
\usetikzlibrary{decorations.markings}
\usetikzlibrary{backgrounds}
\usetikzlibrary{patterns}
\usetikzlibrary{tikzmark}
\usetikzlibrary{calligraphy}
\usepackage{pgfplots}
\pgfplotsset{compat=1.10}
\usepgfplotslibrary{fillbetween}
\usepackage[T1]{fontenc}
\usepackage{tensor}
\usepackage{lmodern}
\usepackage{cancel}
\usepackage{multicol}
\usepackage{hyperref}
\usepackage{adjustbox}
\usepackage{graphicx}
\usepackage{yhmath}
\usepackage{etoolbox}
\usepackage{stmaryrd}
\usepackage{needspace}
\usepackage{listofitems}
\usepackage{geometry}
\usepackage[dvipsnames]{xcolor}
\setlist{leftmargin=8mm, itemsep=0.4em, topsep={0.5em}}
\usepackage[textsize=tiny,linecolor=MidnightBlue,backgroundcolor=MidnightBlue!25,textwidth=2.7cm]{todonotes}
\makeatletter 
\@mparswitchfalse%
\makeatother
\normalmarginpar

\hypersetup{
    colorlinks=true,
    linkcolor=MidnightBlue!90!black,
    citecolor=black!25!red,
    urlcolor=MidnightBlue!90!black,
    pdftitle={Yang-Mills theory for Multiplicative Ehresmann connections},
    pdfauthor={Žan Grad},
    pdfpagemode=FullScreen,
}

\makeatletter
\renewcommand\subsection{
\Needspace{2cm}
\@startsection{subsection}{2}%
  \z@{-.5\linespacing\@plus-.7\linespacing}{.5\linespacing}%
  {\bf}}
\makeatother

\makeatletter
\newsavebox{\@brx}
\newcommand{\llangle}[1][]{\savebox{\@brx}{\(\m@th{#1\langle}\)}%
  \mathopen{\copy\@brx\kern-0.5\wd\@brx\usebox{\@brx}}}
\newcommand{\rrangle}[1][]{\savebox{\@brx}{\(\m@th{#1\rangle}\)}%
  \mathclose{\copy\@brx\kern-0.5\wd\@brx\usebox{\@brx}}}
\makeatother

\makeatletter
\def\blfootnote{\xdef\@thefnmark{}\@footnotetext}
\makeatother
\setlist[enumerate]{leftmargin=25pt, label={(\roman*)}}

\DeclarePairedDelimiterX\set[1]\lbrace\rbrace{\def\given{\;\delimsize\vert\;}#1}
\renewcommand{\d}[1]{\ensuremath{\operatorname{d}\!{#1}}}
\newcommand{\D}[1]{\ensuremath{\operatorname{D}\!{#1}}}

\newcommand{\enquote}[1]{``{#1}''}
\newcommand{\rra}{\rightrightarrows}
\newcommand{\Ra}{\Rightarrow}
\newcommand{\ra}{\rightarrow}

\newcommand{\R}{\mathbb{R}}

\newcommand{\Z}{\mathbb{Z}}

\newcommand{\C}{\mathcal{C}}
\newcommand{\G}{\mathcal{G}}
\newcommand{\DD}{\mathscr{D}}

\newcommand{\vb}{\mathcal{VB}}
\newcommand{\A}{\mathscr{A}}
\newcommand{\F}{\mathcal{F}}
\renewcommand{\H}{\mathcal{H}}
\newcommand{\ve}{\mathscr{V}\hspace{-0.25em}\mathscr{E}}

\newcommand{\ul}[1]{\underline{\smash{#1}}}

\newcommand{\PB}{\mathcal{PB}}
\renewcommand{\O}{\mathcal{O}}
\renewcommand{\S}{\mathscr{S}}
\renewcommand{\sec}{§}
\renewcommand{\L}{\mathscr{L}}
\renewcommand{\frak}{\mathfrak}

\newcommand{\End}{\operatorname{End}}

\newcommand{\codim}{\operatorname{codim}}
\newcommand{\sgn}{\operatorname{sgn}}

\newcommand{\pr}{\mathrm{pr}}
\newcommand{\vol}{\mathrm{vol}}
\newcommand{\Ad}{\mathrm{Ad}}

\newcommand{\ad}{\operatorname{ad}}
\newcommand{\Hor}{\mathrm{Hor}}
\newcommand{\id}{\mathrm{id}}

\newcommand{\im}{\operatorname{im}}
\newcommand{\inv}{\mathrm{inv}}
\newcommand{\ginv}{{\G\text{-}\inv}}
\newcommand{\ainv}{{A\text{-}\inv}}

\newcommand{\vf}{\mathfrak X}

\newcommand{\deriv}[2]{\frac{d}{d{#1}}\Big|_{{#1}={#2}}}

\newcommand{\arc}{\mathscr{C}}
\newcommand{\cev}[1]{{#1}^L}
\newcommand{\inner}[2]{\left\langle{#1},{#2}\right\rangle}
\newcommand{\innersmall}[2]{\langle{#1},{#2}\rangle}
\newcommand{\innerr}[2]{\llangle #1, #2\rrangle}

\DeclareRobustCommand{\gobblefive}[5]{}
\newcommand*{\SkipTocEntry}{\addtocontents{toc}{\gobblefive}}

\numberwithin{equation}{section}

\theoremstyle{plain} 
\newtheorem{theorem}{Theorem}
\newtheorem{corollary}[theorem]{Corollary}
\newtheorem{lemma}[theorem]{Lemma}
\newtheorem{proposition}[theorem]{Proposition}

\theoremstyle{definition}
\newtheorem{definition}[theorem]{Definition}

\theoremstyle{remark}
\newtheorem{remark}[theorem]{Remark}

\newtheorem*{remark*}{Remark}
\newtheorem{example}[theorem]{Example}

\AtBeginEnvironment{lemma}{\Needspace{1cm}}
\AtBeginEnvironment{theorem}{\Needspace{1cm}}
\AtBeginEnvironment{proposition}{\Needspace{1cm}}
\AtBeginEnvironment{definition}{\Needspace{1cm}}
\AtBeginEnvironment{corollary}{\Needspace{1cm}}

\calclayout

\newcommand{\Addresses}{{
  \bigskip
  \footnotesize

  Žan Grad, \textsc{KU Leuven, Department of Mathematics, Celestijnenlaan 200B, Leuven, Belgium.}\par\nopagebreak

  \vspace{0.2em}
  \textit{E-mail address}: \url{grad.zan@kuleuven.be}.\par\nopagebreak
  
  \textit{Webpage}: \url{https://zangrad.github.io}.
}}

\begin{document}

\title[Yang--Mills theory for multiplicative Ehresmann connections]{Yang--Mills theory for\\multiplicative Ehresmann connections
}

\author[Ž.\ Grad]{Žan Grad} \thanks{This research is supported by PhD Grant UI/BD/152069/2021 of FCT, Portugal, and  COST Action CaLISTA (CA21109) of the European Cooperation in Science and Technology. It was partially supported by FWO and FNRS under FWO project G0B3523N and EOS project G0I2222N}





\subjclass[2020]{58H05 (22A22, 58E15, 53B15)}

\begin{abstract}
  We develop a twofold generalization of classical Yang--Mills theory, extending it from principal bundles to the setting of possibly \textit{non-transitive} and \textit{non-integrable} Lie algebroids. The classical theory is recovered when one considers the Atiyah algebroid of a principal bundle. In our framework, principal bundle connections are replaced by the more general notion of (infinitesimal) multiplicative Ehresmann connections. 
   An action functional for such connections is constructed, now including a curvature 3-form contribution, alongside the usual curvature 2-form term, and the resulting variational problem is naturally constrained by a cohomological condition. 
  We derive the associated Euler--Lagrange equations, and define a class of self-dual solutions (instantons) in both 4 and 5 dimensions. We also show that the solution space is invariant under gauge transformations, and compute its tangent space at a solution.
  As an important example, we show that our framework produces a Yang--Mills theory for connections on bundle gerbes.
\end{abstract}


\maketitle
\thispagestyle{empty}


\tableofcontents

\vspace{-2em}


\pagebreak
\section{Introduction and main results}
\label{sec:intro}
In the framework of principal bundles, Yang--Mills theory is an application of variational calculus to principal bundle connections. Roughly speaking, the \textit{Yang--Mills action functional} inputs principal connections $\omega$ and outputs the $L^2$-norm of the curvature $F^\omega$,
\begin{align}
   \label{eq:classical_YM_action}
   \S(\omega)= \int_M \inner{F^\omega}{F^\omega} \vol_M.
\end{align}
Minimizing this functional amounts to the second-best scenario to having a flat connection. Finding the minima requires solving the corresponding variational problem, i.e., finding its critical points, which are characterized as solutions to the Euler--Lagrange equations. Specifically, for the action functional above, these equations boil down to the vanishing exterior covariant coderivative of $F^\omega$, 
\begin{align}
   \label{eq:classical_YM_eqn}
   \d{}^{\nabla^\omega}{\star}\ F^\omega=0.
\end{align}
This is called the \textit{Yang--Mills equation}; in coordinates, it is a system of nonlinear second-order partial differential equations for $\omega$. One of its main features is that the solution space is preserved by pullbacks along gauge transformations, allowing for the study of the moduli space of solutions. 
Another important feature is that when the base space $M$ is 4-dimensional, the Yang--Mills equations allow for a distinguished first-order reduction given by the (anti) self-duality equations $F^\omega = \pm \star F^\omega$, whose solutions, known as \textit{instantons}, are  global minima of the action. This paper is about a natural generalization of this framework and its features. For a reference on the classical Yang--Mills theory (on principal bundles), we direct the reader to \cites{atiyah_geometry_of_ym_fields} or \cite{hamilton}.

A concrete motivation for our generalization comes from the theory of \textit{bundle gerbes} over manifolds. These are geometric models for higher degree cohomology classes. To briefly elaborate, recall that for a smooth manifold $M$, there is a one-to-one correspondence between second degree integral cohomology classes $H^2(M;\Z)$ and (isomorphism classes of) principal $S^1$-bundles over $M$, via their Chern classes. Similarly, the third degree integral cohomology classes $H^3(M;\Z)$ correspond to (Morita equivalence classes of) \textit{$S^1$-bundle gerbes} over $M$ \cites{bundle_gerbes_original, bundle_gerbes, stacks_gerbes}. Just as principal bundles, they also admit a natural notion of a connection, known in string theory under the name of \textit{Kalb--Ramond fields}. 
 A natural question, posed in \cite{ym_gerbes}, is whether Yang--Mills theory can be extended to bundle gerbes. We provide a positive answer to this question in Theorem \ref{thm:ym_gerbes}, as a direct consequence of a more general approach with \textit{multiplicative Ehresmann connections} (abbr.\ MECs). 
 
 The notion of a MEC was first introduced in \cite{gerbes} for Lie groupoid extensions, and later systematically studied in \cite{mec} in terms of their existence, properties, and examples. They are a certain kind of connections compatible with the geometric structure of a given Lie algebroid or groupoid. Beyond the familiar case of principal bundle connections and linear connections on vector bundles, examples include connections on (non-abelian) bundle gerbes, equivariant splittings for bundles of ideals on action groupoids, and more. They are interesting in their own right and exhibit a strong applicative character. For instance, they have recently been used to obtain a normal form theorem for Poisson submanifolds \cite{poisson_submanifolds}, and to study fibrations of Lie groupoids \cite{mecs_for_fibrations}.  Moreover, in \cite{mec}, MECs were also introduced in the infinitesimal setting of Lie algebroids. This is particularly important, since, as we will see, our generalization of Yang--Mills theory can be built without the integrability assumption. To emphasize they belong to the infinitesimal realm, we will often call them \textit{IM connections}, (or \textit{infinitesimal} MECs, or IMECs). 

\SkipTocEntry\subsection*{Overview of main results}
In this paper, we generalize Yang--Mills theory 
in a two-fold way, relaxing both the \textit{transitivity} and the \textit{integrability} conditions that were implicitly assumed when dealing with principal bundles. We obtain a framework suitable for general Lie groupoids and Lie algebroids; connections on principal bundles are hereby replaced with the more general MECs on Lie groupoids and IMECs on Lie algebroids. Our main results are the following.

\begin{itemize}
   \item We construct an action functional for (infinitesimal) MECs, generalizing \eqref{eq:classical_YM_action}. We pose a natural \textit{constrained variational problem} and obtain the Euler--Lagrange equations, which boil down to a PDE similar to \eqref{eq:classical_YM_eqn}, as well as a certain partial differential relation (PDR),
   \item \textit{Gauge invariance.} The solution space of the variational problem is proved to be \textit{invariant under Lie groupoid (or algebroid) automorphisms} preserving the extra data entering the construction. 
   \item \textit{Instantons in 4D and 5D.} We extend the classical notion of self-dual fields on a four-dimensional base, introduce a five-dimensional analogue, and show they form a special class of solutions,
   \item We relate the obtained Yang--Mills theories in the global and the infinitesimal setting, via the \textit{van Est map} --- a fundamental map relating the complexes of differential forms central to our framework. We show that if a given algebroid is integrable, the global and infinitesimal Yang--Mills theory are equivalent, up to connectedness conditions on the integrating Lie groupoid.
   \item The (formal) tangent space to the space of solutions is obtained,
   \item The theory is presented on several classes of examples: (i) bundle gerbes (non-transitive, integrable example); (ii) the Almeida--Molino algebroid (transitive, generally non-integrable example); and (iii) a totally intransitive example, admitting self-dual solutions in 5 dimensions.
\end{itemize}
The rest of the introduction is dedicated to an exposition of the first three bullet points.

\SkipTocEntry\subsection*{The classical theory from a Lie algebroid perspective}
To expand on our stated results and motivate them, we first consider principal bundles from the viewpoint of Lie algebroids. In the work of Atiyah and Bott \cite{atiyah-bott}, it is already recognized that connections on a principal $G$-bundle $\pi\colon P\ra M$ can equivalently be described as vector bundle splittings of the following short exact sequence, called the \textit{Atiyah sequence}.
\begin{align}
   \label{eq:atiyah_ses}
   \begin{tikzcd}[ampersand replacement=\&]
   	0 \& {\ad (P)} \& {A(P)} \& TM \& 0.
   	\arrow[from=1-1, to=1-2]
   	\arrow[from=1-2, to=1-3]
   	\arrow["\rho"', from=1-3, to=1-4]
   	\arrow[from=1-4, to=1-5]
      \arrow["\sigma", bend left=-30, from=1-4, to=1-3]
   \end{tikzcd}
\end{align}
Here, $A(P)=\frac{TP}G$ is the Atiyah algebroid of $P\ra M$, $\ad (P)=\frac{P\times \frak g}{G}$ is the adjoint vector bundle, and $\rho$ is induced by $\d\pi$. The interpretation here is that a splitting $\sigma$ is the same as a horizontal lift of vector fields on $M$, to $G$-invariant ones on $P$. Since \eqref{eq:atiyah_ses} is a short exact sequence of Lie algebroids, any splitting $\sigma$ has a \textit{curvature} $F^\sigma$, measuring the failure of $\sigma$ to be a Lie algebroid morphism:
\[
F^\sigma\in\Omega^2(M;\ad P),\quad F^\sigma(X,Y)=\sigma([X,Y])-[\sigma(X),\sigma(Y)].
\]
Moreover, a splitting $\sigma$ induces a connection $\nabla^\sigma$ on $\ad P$, given by  $\nabla^\sigma_X \xi = [\sigma(X),\xi],$ for all  $X\in \vf(M)$ and $\xi\in\Gamma(\ad P)$. Importantly, Jacobi identity on $A(P)$ implies that the triple $(\sigma, \nabla^\sigma, F^\sigma)$ satisfies:
\begin{enumerate}[label={(\roman*)}]
   \item The connection $\nabla^\sigma$ preserves the Lie bracket on $\ad (P)$: $\nabla^\sigma[\xi,\eta]=[\nabla^\sigma\xi,\eta]+[\xi,\nabla^\sigma\eta]$.
   \item The curvature tensor $R^{\nabla^\sigma}$ of $\nabla^\sigma$ equals the adjoint action by $F^\sigma$, i.e., $R^{\nabla^\sigma} = [-, F^\sigma]$,
   \item The curvature satisfies the Bianchi identity, $\d{}^{\nabla^\sigma} F^\sigma=0$.
\end{enumerate}

Now, to formulate a variational problem, we need to additionally pick a metric (and orientation) on the base $M$, as well as an $\ad$-invariant bundle metric on $\ad(P)$. 
This allows us to define the action functional \eqref{eq:classical_YM_action}. 
Noting that the space of splittings is an affine space modelled on $\Omega^1(M;\ad P)$, varying this action yields precisely the Yang--Mills equation \eqref{eq:classical_YM_eqn}. 

This approach readily generalizes to (possibly non-integrable) transitive Lie algebroids $A\Ra TM$, by simply considering the splittings of the SES defined by the anchor $\rho\colon A\twoheadrightarrow TM$, sometimes called the abstract Atiyah sequence. Conceptually, this offers an important insight: Yang--Mills theory is fundamentally \textit{infinitesimal in nature}, since its construction does not depend on global data. The more difficult part is in extending this to \textit{non-transitive} Lie algebroids, which are moreover possibly  \textit{non-regular}, i.e., the anchor $\rho\colon A\ra TM$ need not be surjective or have constant rank. The main idea of how to proceed in the non-transitive case is explained below.


\SkipTocEntry\subsection*{What are our fields?} 
In the classical case, the dynamical fields (i.e., the variables that the action functional takes as inputs) were simply the splittings of \eqref{eq:atiyah_ses}. To describe what the dynamical fields are in the non-transitive case, we must first set the stage by fixing a short exact sequence of Lie algebroids.
\begin{definition}
   \label{def:boi_algd}
   A \textit{bundle of ideals} of a Lie algebroid $A\Ra M$ is a vector subbundle $\frak k$ of $A$ contained within its isotropy ($\frak k\subset \ker\rho$), whose sections $\Gamma(\frak k)$ form an ideal for the Lie bracket on $\Gamma(A)$: 
\begin{align}
   [\Gamma(A),\Gamma(\frak k)]\subset \Gamma(\frak k).
\end{align}
\noindent In this way, $\frak k\ra M$ is naturally a bundle of Lie algebras with the bracket inherited from $A$, and we have the following associated short exact sequence of Lie algebroids over $M$:
 \begin{align}
     \label{eq:ses_bdl_ideals}
   \begin{tikzcd}[ampersand replacement=\&]
  	0 \& \frak k \& A \& B \& 0,
  	\arrow[from=1-1, to=1-2]
  	\arrow[from=1-2, to=1-3]
  	\arrow["\phi", from=1-3, to=1-4]
  	\arrow[from=1-4, to=1-5]
  \end{tikzcd}
 \end{align}
 where $\phi\colon A\ra B=A/\frak k$ is the quotient projection. Alternatively, one can start with a surjective Lie algebroid map $\phi\colon A\ra B$, covering $\id_M$, and define $\frak k=\ker\phi$. In both cases, the Lie algebroid $A$ is called an \textit{extension} of the Lie algebroid $B$ by a bundle of Lie algebras $\frak k$. Note that both Lie algebroids $A$ and $B$ determine the same (singular) orbit foliation on $M$, which we denote by $\F$. 
\end{definition}

In the setting of general Lie algebroid extensions, the aforementioned \textit{IM connections} provide a natural, geometrically richer notion of a connection compared to just vector bundle splittings of \eqref{eq:ses_bdl_ideals}. They have a rather involved definition which we postpone until \sec\ref{sec:background_imecs}; in supergeometric terms, they are connections on the $Q$-bundle $\pi^*\frak k[1]\ra A[1]$, see Remark \ref{rem:supergeometry}. For the purpose of this introduction, we will think of them in a down-to-earth way: as triples $(\sigma, \nabla, F)$, satisfying the compatibility relations below, extending the conditions (i--iii) we saw in the transitive case. These triples will be our dynamical fields. Although this approach is very concrete, it comes with the price of obscuring why these are the \enquote{natural} fields to consider, which will become clear in \sec\ref{sec:preliminaries}.
  
 \begin{definition}
\label{def:gauge_triples}
   Given a bundle of ideals $\frak k$ of a Lie algebroid $A\Ra M$, a \textit{connection triple} $(\sigma,\nabla,F)$ consists of a vector bundle splitting $\sigma\colon B\ra A$ of the sequence \eqref{eq:ses_bdl_ideals}, a connection $\nabla$ on $\frak k\ra M$, and a 2-form $F\in\Omega^2(M;\frak k)$. They must satisfy the following compatibility conditions:
{\setlength\multicolsep{0.5em}
\begin{multicols}{2}
\begin{enumerate}[itemsep=0.5em, topsep=0pt]
  \item $\nabla$ preserves the Lie bracket on $\frak k$.
  \item $R^\nabla=[-,F]_{\frak k}$.
  \item The \textit{curvature 3-form} $G\coloneq\d{}^\nabla F$ \\is transversal: $\iota_{X}G=0$ for all $X\in T\F$.
  \item $\nabla_{\rho_B(\beta)}=[\sigma(\beta),-]$, for any $\beta\in B$.
  \item $F^\sigma=(\rho_B)^*F$.
  \item[\vspace{\fill}] \phantom{}\\\phantom{}
\end{enumerate}
\end{multicols}}
\vspace{-0.2em}
\noindent Equivalently, it consists of an IM connection together with a curving $F$; see Definitions \ref{def:im_connection} and \ref{def:curving}.
 \end{definition}
Comparing to the transitive case, the \textit{Bianchi identity} in condition (iii) now clearly stands out: it no longer reads $\d{}^\nabla F=0$. Instead, it implies that \textit{the curvature 3-form $G$ is an $A$-invariant form} (Definition \ref{def:invariant_forms}), meaning we regard it as a form on the (possibly non-smooth) leaf space $M/\F$. This leads to a conceptual shift regarding the curvature, now encoded by a \textit{curvature pair} $(F,G)$.  The fact that $F$ is a part of the dynamical field, as opposed to being derived from the IM connection, reflects another paradigm shift: $F$ plays the role of a \enquote{potential}, modelling an abstract object --- the abstract curvature of an IM connection, which is an IM 2-form on $A$, see Definition \ref{def:curvature}.

At last, the conditions (iv) and (v) appearing above state that leafwise (i.e., tangentially to $\F$), $\nabla$ and $F$ are completely determined by the splitting $\sigma$.
Here, $\rho_B\colon B\ra TM$ denotes the anchor of the algebroid $B\Ra M$, and $F^\sigma$ denotes the curvature of the splitting $\sigma$, defined analogously as in the transitive case:
\begin{align}
\begin{split}
      \label{eq:curvature_splitting}
      &F^\sigma\in \Omega^2(B;\frak k)\coloneq\Gamma(\Lambda^2B^*\otimes \frak k),\\ &F^\sigma(\beta_1,\beta_2)=\sigma[\beta_1,\beta_2]-[\sigma(\beta_1),\sigma(\beta_2)].
\end{split}
\end{align}

\SkipTocEntry\subsection*{Generalized Yang\texorpdfstring{--}{-}Mills theory as a constrained variational problem}
When constructing an action functional for connection triples $(\sigma, \nabla, F)$, the idea is that besides accounting for the norm of the 2-curvature $F$, the norm of the corresponding 3-curvature $G=\d{}^\nabla F$ should now also contribute to the action. With this in mind, we can state our main theorem. The groupoid version of this theorem is given later, in Theorem \ref{thm:main_theorem_groupoids}.

\begin{theorem}
      \label{thm:main_theorem}
      Let $\frak k$ be a bundle of ideals of a Lie algebroid $A$ over a compact, oriented, pseudo-Riemannian manifold $M$, and let $\inner\cdot\cdot_{\frak k}$ be an ad-invariant bundle metric on $\frak k$. For any connection triple $(\sigma,\nabla,F)$, the following statements are equivalent.
      \begin{enumerate}[label={(\roman*)}]
         \item The triple $(\sigma,\nabla,F)$ is a solution to the \textbf{constrained variational problem} for the functional 
   \begin{align}
      \label{eq:ym_action}
      \S(\sigma,\nabla,F)=\int_M \inner FF_{\frak k}\vol_M+\mu\int_M \inner GG_{\frak k}\vol_M,\quad (\mu\in\R\text{ fixed}),
   \end{align}
      subject to the constraint that the cohomological class, defined by the underlying IM connection of the triple, is constant in the cohomology of the Weil complex of the Lie algebroid $A$.
      \item The curvature pair $(F,G)$ satisfies the following Euler--Lagrange equation and relation,
      \begin{align}
         \label{eq:YM.I}
            (\d{}^\nabla)^* F&=0,\tag{YM.I}\\
   \label{eq:YM.II}
            F+\mu\,(\d{}^\nabla)^* G &\perp \Omega^2_{\ainv}(M;\frak k),\tag{YM.II}
      \end{align}
      which are systems of nonlinear partial differential equations and relations, respectively.
      \end{enumerate}
\end{theorem}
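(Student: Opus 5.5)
The plan is to identify the space of admissible variations and then vary the two terms of \eqref{eq:ym_action} separately. First I would describe the tangent space to the constrained configuration space. The space of connection triples is affine, and by the discussion of IM connections and curvings (Definitions \ref{def:im_connection} and \ref{def:curving}) a variation of $(\sigma,\nabla,F)$ is governed by a pair: an IM 1-form variation, and a change of curving. The cohomological constraint says that the class in the Weil complex of $A$ stays fixed. So I expect admissible infinitesimal variations to take the form
\[
\delta F=\d{}^\nabla\alpha+\beta ,
\]
where $\alpha\in\Omega^1(M;\frak k)$ comes from the variation of the IM connection (the one that changes the class by an exact term), and $\beta\in\Omega^2_{\ainv}(M;\frak k)$ is an $A$-invariant 2-form. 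The free change of curving is allowed precisely because it preserves compatibility (iii)--(v), which needs $\beta$ to vanish leafwise and to be invariant. Correspondingly, $\delta G=\d{}^\nabla\delta F+[\delta\nabla\wedge F]$. Using conditions (ii) and (i), together with $\d{}^\nabla\d{}^\nabla=[F,-]$, this should reduce to $\delta G=\d{}^\nabla\beta$, because the $\alpha$-contribution to $G$ cancels. Establishing this parametrization from the Weil-complex description is the step I expect to be the main obstacle. It requires checking that the constraint leaves exactly these degrees of freedom, that $\alpha$ is arbitrary, and that the $\alpha$-part of $\delta G$ indeed cancels. I would try to read this off from the transgression formula for the Weil class under a path of IM connections.

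With the parametrization in hand, I would compute the first variation using ad-invariance of $\inner\cdot\cdot_{\frak k}$ and the metric compatibility of $\nabla$. The latter is needed for the formal adjoint $(\d{}^\nabla)^*$ to be the $L^2$-adjoint; I would check that it follows from ad-invariance together with (i) and (iv), or else assume the metric is $\nabla$-parallel as part of the setup. The result is
\[
\tfrac12\delta\S=\int_M\inner{(\d{}^\nabla)^*F}{\alpha}\vol_M+\int_M\inner{F+\mu(\d{}^\nabla)^*G}{\beta}\vol_M .
\]
Compactness of $M$ with no boundary justifies the integrations by parts.

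Finally I would read off the equivalence. Since $\alpha$ ranges over all of $\Omega^1(M;\frak k)$ (taking $\beta=0$), vanishing of the first term for every $\alpha$ gives \eqref{eq:YM.I}. Since $\beta$ ranges over $\Omega^2_{\ainv}(M;\frak k)$, vanishing of the second term is exactly the orthogonality relation \eqref{eq:YM.II}. The converse is immediate from the same formula. I would also check that mixed variations with both $\alpha$ and $\beta$ nonzero add no further conditions, which is clear by linearity.
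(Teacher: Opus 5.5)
Your strategy is the paper's. You vary inside $\DD_\chi(A;\frak k)$ by $\gamma\in\Omega^1(M;\frak k)$ and $\beta\in\Omega^2_\ainv(M;\frak k)$, observe that only $\beta$ moves $G$, and read \eqref{eq:YM.I} and \eqref{eq:YM.II} off the linear term $2\innerr{F}{\d{}^\nabla\gamma+\beta}+2\mu\innerr{G}{\d{}^\nabla\beta}$.

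\emph{Metric compatibility.} Your metric-compatibility step is wrong, and it is also unnecessary. Ad-invariance together with (iv) makes $\nabla$ compatible with $\inner\cdot\cdot_{\frak k}$ only along the orbits; transversally nothing is forced. For instance, in Example \ref{example:self_dual} any flat connection $\nabla_X f=Xf+\theta(X)f$ with $\theta$ closed is allowed, and it is not metric unless $\theta=0$. This is why the paper introduces transversal compatibility (Definition \ref{def:transversal_metric_compatibility}) as a separate condition and says it is not needed for this theorem. In the statement, $(\d{}^\nabla)^*$ is by definition the formal adjoint for $\innerr\cdot\cdot$, which exists for every linear connection. Hence the linear term equals $2\big(\innerr{(\d{}^\nabla)^*F}{\gamma}+\innerr{F+\mu(\d{}^\nabla)^*G}{\beta}\big)$ with no hypothesis on $\nabla$. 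Compatibility would only give the formula $(-1)^k\star^{-1}\d{}^\nabla\star$, and adding it as an assumption would prove a weaker theorem.

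\emph{Parametrization.} The step you flag as the main obstacle needs no transgression formula. Fixing $[\C,v]\in H^1(W^{\bullet,1}(A;\frak k))$ means, by definition, that the IM connection moves by $\delta^0\gamma$ with $\gamma$ arbitrary. Since $\delta^0\gamma$ is a horizontal IM $1$-form, the result is again an IM connection. The substantive input is Lemma \ref{lemma:affine_primitive}: $F+\d{}^\nabla\gamma-\tfrac12[\gamma,\gamma]_{\frak k}$ is a curving of $(\C,v)+\delta^0\gamma$. This follows from three facts:
\begin{itemize}
\item[] the curvature expansion $\Omega^{(\C,v)+(L,l)}=\Omega^{(\C,v)}+\D{}^{(\C,v)}(L,l)+\mathbf c_2(L,l)$;
\item[] the identity $\D{}^{(\C,v)}\circ\delta^0=\delta^0\circ\d{}^\nabla$, the left square of \eqref{eq:weil_ideals};
\item[] the identity $\mathbf c_2(\delta^0\gamma)=-\tfrac12\delta^0[\gamma,\gamma]_{\frak k}$.
\end{itemize}
Two curvings of the same IM connection differ by an element of $\ker\delta^0=\Omega^2_\ainv(M;\frak k)$; combined with the above, this gives Lemma \ref{lemma:D_chi_affine}. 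That lemma supplies, for every $(\gamma,\beta)$, an actual line $\lambda\mapsto\big((\C,v)+\lambda\delta^0\gamma,\,F+\lambda(\d{}^\nabla\gamma+\beta)-\tfrac{\lambda^2}{2}[\gamma,\gamma]_{\frak k}\big)$ inside the constraint set. A first-order check of the linearized conditions does not by itself produce such curves, and the first-variation argument needs them.

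Along this line $G$ becomes exactly $G+\lambda\d{}^\nabla\beta$. The $\gamma$-part leaves $G$ invariant to all orders, using $\nabla^\gamma=\nabla+[\cdot,\gamma]$, $R^\nabla=[-,F]_{\frak k}$ and Jacobi; your cancellation is the linearization of this. The $\beta$-part contributes $\d{}^{\nabla^{\lambda\gamma}}\beta=\d{}^\nabla\beta$ because $\beta$ is central. Finally, $(\gamma,\beta)$ and $(\gamma+\epsilon,\beta-\d{}^\nabla\epsilon)$ with $\epsilon\in\Omega^1_\ainv(M;\frak k)$ give the same line, so the model space is the quotient $Q_\chi$. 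This redundancy does not affect reading off the Euler--Lagrange conditions.
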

  \noindent The subscript \enquote{$A$-inv} in \eqref{eq:YM.II} denotes \textit{$A$-invariant forms}; as already mentioned, they serve as a model for the forms on the (possibly non-smooth) leaf space $M/\F$. 
  The differential operator $(\d{}^\nabla)^*$ denotes the covariant coderivative, i.e., the formal adjoint to $\d{}^\nabla$ with respect to the $L^2$-pseudo inner product on $\Omega^\bullet(M;\frak k)$, induced by integrating over $M$. Compactness of $M$ is assumed so that the integral \eqref{eq:ym_action} is finite; alternatively, we can work with compactly supported forms.
  

\SkipTocEntry\subsection*{Gauge invariance of solutions} 
Given a variational problem, \textit{gauge invariance} (or \textit{automorphism invariance}) refers to the principle that any automorphism of the underlying structure which defines the variational problem, should also preserve its solution space. 
Such an automorphism that preserves the underlying structure will be called a \textit{gauge transformation}. More precisely, in our setting we have:
\begin{definition}
   \label{def:infinitesimal_gauge_transformations}
   Assume the setting of Theorem \ref{thm:main_theorem}.\ An \textit{(infinitesimal) gauge transformation} is a Lie algebroid automorphism $\Theta$ of $A\Ra M$ covering an orientation-preserving isometry on the base $M$, restricting on $\frak k\subset A$ to an isometry for the bundle metric $\inner\cdot\cdot_{\frak k}$. 
\end{definition}
Note that any such $\Theta$ is in particular an automorphism of the extension \eqref{eq:ses_bdl_ideals}, since we are requiring $\Theta(\frak k)= \frak k$. Gauge transformations thus naturally act on connection triples from Definition \ref{def:gauge_triples}, and the next theorem asserts this action preserves the solution space, i.e., gauge invariance.
\begin{theorem}
   \label{thm:gauge_invariance}
   In the setting of Theorem \ref{thm:main_theorem}, the action functional \eqref{eq:ym_action} is invariant under infinitesimal gauge transformations, hence the solution space to \eqref{eq:YM.I} and \eqref{eq:YM.II} is preserved by them. 

   In particular, this holds for flows of inner derivations $[\alpha,\cdot]$ of the Lie algebroid $A$, induced by the sections $\alpha\in\Gamma(A)$ such that the flow of $\rho(\alpha)$ is an orientation-preserving isometry of the base $M$.
\end{theorem}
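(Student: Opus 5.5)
We define the action of a gauge transformation on connection triples by pullback and then check that every ingredient of the variational problem is natural under it. Let $\Theta$ be a Lie algebroid automorphism of $A$ covering an orientation-preserving isometry $\theta$ of $M$, with $\Theta(\frak k)=\frak k$. Then $\Theta$ descends to an automorphism $\Theta_B$ of $B=A/\frak k$, since $\phi\circ\Theta=\Theta_B\circ\phi$. Given a triple $(\sigma,\nabla,F)$, we set:
\begin{itemize}
\item $\Theta^*\sigma:=\Theta^{-1}\circ\sigma\circ\Theta_B$;
\item $\Theta^*\nabla$, the pullback connection, given by $(\Theta^*\nabla)_X\xi=\Theta^{-1}\nabla_{\d\theta(X)}(\Theta\xi)$;
\item $\Theta^*F:=\Theta^{-1}\circ\theta^*F$.
\end{itemize}
We must show this is again a connection triple. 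Conditions (i), (ii) and (iv) follow because $\Theta|_{\frak k}$ is a bracket-preserving fibrewise isomorphism intertwining anchors, i.e.\ $\rho\circ\Theta=\d\theta\circ\rho$. Condition (v) follows because $F^{\Theta^*\sigma}=\Theta^{-1}\circ\Theta_B^*F^\sigma$, which one checks directly from \eqref{eq:curvature_splitting} using that $\Theta$ and $\Theta_B$ are algebroid morphisms. For (iii), $\d{}^{\Theta^*\nabla}(\Theta^*F)=\Theta^*(\d{}^\nabla F)$ by naturality of the exterior covariant derivative. Moreover $\d\theta$ preserves $T\F$, since $\theta$ maps orbits to orbits, so transversality of $G$ is preserved. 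Equivalently, one could argue at the level of IM connections and curvings via Definitions \ref{def:im_connection} and \ref{def:curving}, using that pullback by $\Theta$ commutes with the Weil differential.

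Next we show $\S$ is invariant. Since $\Theta|_{\frak k}$ is an isometry of $\inner\cdot\cdot_{\frak k}$ covering the isometry $\theta$, we get pointwise
\[
\inner{\Theta^*F}{\Theta^*F}_{\frak k}=\theta^*\inner FF_{\frak k}.
\]
Here the norm on forms uses the metric on $M$, which $\theta$ preserves, so $\theta$ commutes with the Hodge star. The same identity holds for $G$, using $G^{\Theta}=\Theta^*G$. Because $\theta$ preserves orientation and $\theta^*\vol_M=\vol_M$, we have $\int_M\theta^*(\cdot)\,\vol_M=\int_M(\cdot)\,\vol_M$. Hence $\S(\Theta^*(\sigma,\nabla,F))=\S(\sigma,\nabla,F)$.

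To conclude that solutions go to solutions, we also need $\Theta^*$ to be compatible with the constraint. It is a bijection on connection triples, with inverse $(\Theta^{-1})^*$, and it maps admissible variations with fixed Weil cohomology class to admissible variations. This holds because $\Theta$ induces a cochain automorphism of the Weil complex of $A$ that is compatible with taking the class of an IM connection. So critical points of the constrained problem correspond under $\Theta^*$. Alternatively, verify \eqref{eq:YM.I} and \eqref{eq:YM.II} directly. The operator $(\d{}^\nabla)^*$ is natural, since $\theta$ is an isometry and $\Theta$ is a fibre isometry. For \eqref{eq:YM.II} we also need $\Theta^*\Omega^2_{\ainv}(M;\frak k)=\Omega^2_{\ainv}(M;\frak k)$. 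I expect this invariance of $A$-invariant forms (Definition \ref{def:invariant_forms}), together with compatibility of the Weil-class constraint, to be the main point to check carefully. It should follow because invariance is defined through the $A$-action, which $\Theta$ intertwines.

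For the last claim, let $\alpha\in\Gamma(A)$ be such that the flow of $\rho(\alpha)$ is by orientation-preserving isometries. The derivation $[\alpha,\cdot]$ integrates to a flow $\Theta_t$ of Lie algebroid automorphisms covering the flow of $\rho(\alpha)$; this is standard, using compactness of $M$ for completeness. Each $\Theta_t$ preserves $\frak k$ because $\frak k$ is an ideal. It remains to see that $\Theta_t|_{\frak k}$ is an isometry. Differentiating in $t$, it suffices that
\[
\rho(\alpha)\inner\xi\eta_{\frak k}=\inner{[\alpha,\xi]}\eta_{\frak k}+\inner\xi{[\alpha,\eta]}_{\frak k}
\]
for all $\xi,\eta\in\Gamma(\frak k)$. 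This is exactly the ad-invariance of $\inner\cdot\cdot_{\frak k}$ in its algebroid sense, i.e.\ invariance under the $A$-representation on $\frak k$, which we take as the paper's meaning of ad-invariance. So each $\Theta_t$ is a gauge transformation, and the first part applies.
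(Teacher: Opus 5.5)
Your proposal is correct. It rests on the same idea as the paper: every ingredient of the problem is natural under pullback, and $\Theta$ preserves both metrics and the orientation. The difference is that you implement this in the connection-triple model, which is exactly the alternative the paper mentions in the remark after its proof.

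The paper instead pulls back the IM connection and derives everything from one identity on the Weil complex, $\D{}^{\Theta^*(\C,v)}\Theta^*=\Theta^*\D{}^{(\C,v)}$. This identity comes from $\d{}^{\nabla^\Theta}\Theta^*=\Theta^*\d{}^\nabla$ together with $h^*_\Theta\Theta^*=\Theta^*h^*$, the latter seen in the VB-algebroid picture. It yields $\Omega^{\Theta^*(\C,v)}=\Theta^*\Omega^{(\C,v)}$, and hence in one stroke that $\Theta^*F$ is a curving with 3-curvature $\Theta^*G$.

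Your version never needs $\D{}^{(\C,v)}$ and is more elementary. The cost is checking (i)--(v) one by one, and relying on the equivalence between triples and pairs (IM connection, curving). On the other hand, you are more explicit than the paper about why invariance of $\S$ gives invariance of the solution space, namely that $\Theta^*$ maps $\DD_\chi(A;\frak k)$ affinely onto $\DD_{\Theta^*\chi}(A;\frak k)$.

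The point you flag as delicate is in fact immediate. Since $\Theta$ preserves brackets and $\d{\theta}\circ\rho=\rho\circ\Theta$, one has $\Theta^*(\L^A_{\Theta\alpha}\gamma)=\L^A_\alpha(\Theta^*\gamma)$ and $\Theta^*(\iota_{\rho(\Theta\alpha)}\gamma)=\iota_{\rho(\alpha)}\Theta^*\gamma$. That is, $\Theta^*\circ\delta^0=\delta^0\circ\Theta^*$, so $\Theta^*$ preserves $\Omega^\bullet_{\ainv}(M;\frak k)$ and the affine structure given by $\llbracket\gamma,\beta\rrbracket$. You should also record that, via $\C(\alpha)=\nabla(v\alpha)+\iota_{\rho(\alpha)}F$, your pulled-back triple $(\Theta^{-1}\circ\sigma\circ\Theta_B,\Theta^*\nabla,\Theta^*F)$ corresponds to $(\Theta^*(\C,v),\Theta^*F)$, so the two constraints match.

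Your argument for the last claim is the same as in Example~\ref{ex:gauge_transformations} and the proof of Lemma~\ref{lem:pairing_A_invariant_implies_G_invariant}: ad-invariance says the derivation $([\alpha,\cdot],\rho(\alpha))$ is metric-compatible, so its flow acts on $\frak k$ by isometries.
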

   For intuition, let us relate back to principal bundles. For an integrable algebroid $A$, a class of infinitesimal gauge transformations is given by adjoint actions of bisections of an integrating (source-fibre connected) Lie groupoid $\G$, that is, differentials $\Ad_\sigma=(I_\sigma)_*\colon A\ra A$ of inner automorphisms $I_\sigma\colon \G\ra \G$, where $\sigma$ is a bisection of $\G$. According to the definition above, the base map of $\sigma$ must be an orientation-preserving isometry; for instance, this holds for static bisections, i.e., those bisections $\sigma$ that take values in the isotropy of $\G$. In the classical case, the usual principal bundle automorphisms are precisely the static bisections of its gauge groupoid.

\SkipTocEntry\subsection*{Instantons in 4D and 5D}
The classical (anti) self-duality relation of a 2-form on a 4-dimensional base $M$ reads
\begin{align}
   \label{eq:asd_4}
   F=\pm \star F.
\end{align}
For a transitive algebroid $A$, if the curvature $F$ of a splitting $\sigma$ of the abstract Atiyah sequence satisfies \eqref{eq:asd_4}, Bianchi's identity $\d{}^{\nabla}F=0$ implies $F$ is indeed a solution to \eqref{eq:YM.I}, and the PDR \eqref{eq:YM.II} is vacuously fulfilled. We recover the usual (anti) self-dual principal connections when $A$ is moreover integrable. Interestingly, however, the transitive case is not the only situation where this applies: for instance, when the bundle of ideals $\frak k\subset$ has a semisimple typical fibre, regardless of transitivity of $A$, we similarly get that any connection triple $(\sigma,\nabla,F)$ whose 2-curvature $F$ satisfies \eqref{eq:asd_4}, is automatically a solution of the variational problem \eqref{eq:ym_action}, and the  constraint is vacuous.

Our generalization to the non-transitive setting allows us to analogously introduce a special class of solutions when the base $M$ is 5-dimensional. In this case, the \textit{(anti) self-dual} connection triples are defined by the condition that the curvature pair $(F,G)$ satisfies
  \begin{align}
   \label{eq:asd_5}
    G=\pm \star F.
  \end{align}
Bianchi's identity implies such triples are automatically solutions to \eqref{eq:YM.I} and \eqref{eq:YM.II}, if we let $\mu=(-1)^s$ where $\mu$ is the constant from Theorem \ref{thm:main_theorem} and $s$ is the index of the pseudo Riemannian metric on $M$. We will see that these special solutions may only exist in very special circumstances.

\subsubsection*{Acknowledgments} I am deeply grateful to Rui L.\ Fernandes for his interest in this project, and the many insightful conversations. This project was a part of my PhD, and my deepest thanks go to my advisers Ioan Mărcuţ and Pedro Resende for their guidance and advice. I also thank Alejandro Cabrera, Marius Crainic, João Nuno Mestre, and Lennart Obster for the many helpful discussions.

\numberwithin{theorem}{section}

\section{Preliminaries}
\label{sec:preliminaries}
The two main sources for the prerequisite background on (infinitesimal) MECs are \cite{mec} and \cite{covariant_derivatives}. In this section, we recall the global and infinitesimal notions of MECs, together with their curvature, and note their fundamental properties. We will also recall the Bott--Shulman--Stasheff complex of forms, and Weil complex in the infinitesimal setting, since these complexes are central to our theory. 
We begin with Lie groupoids, since they are more geometric and intuitive, whereas the infinitesimal setting discussed in \sec\ref{sec:background_imecs} is more (Lie) algebraic and computational. 
\subsection{Multiplicative Ehresmann connections on Lie groupoids}
We begin by describing bundles of ideals on a Lie groupoid, the global analogue of Definition \ref{def:boi_algd}. 
\begin{definition}
\label{def:boi}
On a Lie groupoid $\G\rra M$, a vector bundle $\frak k\subset \ker \rho$ is called a \textit{bundle of ideals}, if for any $g\in \G$, the map $\Ad_g\colon \ker\rho_{s(g)}\ra\ker\rho_{t(g)}$ given by $\Ad_g=\d({C_g})$, restricts to a map
\[\Ad_g\colon \frak k_{s(g)}\ra \frak k_{t(g)}.\] 
Here, $\rho$ denotes the anchor of the Lie algebroid of $\G$. This definition makes $(\frak k,\Ad)$ into a representation of $\G$, which we will sometimes denote by $\Ad\colon \G\curvearrowright \frak k$.
\end{definition}
   Any bundle of ideals on $\G$ is also a bundle of ideals on its Lie algebroid as in Definition \ref{def:boi_algd}, and the converse holds if $\G$ is $s$-connected \cite{rigidity_poisson_submanifolds}. The main class of examples of bundles of ideals on Lie groupoids comes from surjective submersive groupoid morphisms $\Phi\colon\G\ra \H$ covering the identity, 
\[\begin{tikzcd}[column sep=small]
	\G && \H \\
	& M
	\arrow["\Phi", from=1-1, to=1-3]
	\arrow[shift right, from=1-1, to=2-2]
	\arrow[shift left, from=1-1, to=2-2]
	\arrow[shift left, from=1-3, to=2-2]
	\arrow[shift right, from=1-3, to=2-2]
\end{tikzcd}\]
   where one takes as $\frak k$ the kernel of the associated Lie algebroid morphism, $\frak k=\ker\d\Phi|_M$. In this case, we are in the situation where $\G$ is an extension of $\H$ by a bundle of normal Lie subgroups of the isotropy groups of $\G$. That is, we have the following SES of Lie groupoids:
\[\begin{tikzcd}
	1 & {\ker\Phi} & \G & \H & 1,\quad \text{where }\ker\Phi=\Phi^{-1}(1_M).
	\arrow[from=1-1, to=1-2]
	\arrow[from=1-2, to=1-3]
	\arrow["\Phi", from=1-3, to=1-4]
	\arrow[from=1-4, to=1-5]
\end{tikzcd}\]

Contrary to the case of algebroids, not all bundles of ideals arise in this way; given a bundle of ideals $\frak k$ on $\G\rra M$, we define its \textit{smearing} as the distribution $K\subset T\G$,
  \begin{align}
    \label{eq:smearing}
    K_g\coloneqq \d (L_g)_{1_{s(g)}}(\frak k_{s(g)})=\d(R_g)_{1_{t(g)}}(\frak k_{t(g)}).
  \end{align}
  It is easy to see $K$ is involutive; if the corresponding foliation $\F(K)$ on $\G$ is simple, the natural projection $\Phi\colon \G\ra \G/\F(K)$ is a surjective submersive Lie groupoid morphism with $\ker\d\Phi|_M=\frak k$.
\begin{definition}
   \label{def:mec}
   A \textit{multiplicative Ehresmann connection} (MEC) for a bundle of ideals $\frak k$ on a Lie groupoid $\G\rra M$, is equivalently described as either:
\begin{itemize}
   \item A distribution $E\subset T\G$ which is a Lie subgroupoid of $T\G\rra TM$, satisfying $T\G=E\oplus K$.
   \item A multiplicative 1-form $\omega\in \Omega^1(\G;\frak k)$ with values in the representation $\frak k$, satisfying $\omega|_{\frak k}=\id_{\frak k}$.
\end{itemize}
(See below for the definition of multiplicativity of a differential form.) The set of all MECs is denoted
\begin{align*}
  \A(\G;\frak k)=\set{\omega\in \Omega^1_m(\G;\frak k)\given \omega|_{\frak k}=\id_{\frak k}}.
\end{align*}
One should view the connection 1-form $\omega$ as the vertical projection $T\G\ra K\cong s^*\frak k$. The relation between the two descriptions above is $E=\ker\omega$, and we keep in mind that roughly speaking, there is a correspondence of the conditions: \textit{Lie subgroupoid $\leftrightarrow$ multiplicative}, and $T\G=E\oplus K \leftrightarrow \omega|_{\frak k}=\id_{\frak k}$.
\end{definition}
\begin{remark}
   If $\frak k$ comes from a surjective submersive Lie groupoid morphism $\Phi\colon \G\ra \H$ (as above), there holds $K=\ker\d\Phi$, so $E$ is an Ehresmann connection with the additional property of being a Lie subgroupoid of $T\G$. Being a subgroupoid just means that $E$ is closed under the partial multiplication $\d m$ on $T\G\rra TM$, and we stress that this does not imply involutivity of $E\subset T\G$.
\end{remark}
\begin{definition}
   \label{def:multiplicative}
   A differential form $\omega\in\Omega^k(\G;\frak k)\coloneq \Omega^k(\G;s^*\frak k)$ is \textit{multiplicative}, if there holds
\begin{align}
\label{eq:multiplicative}
  \omega_{gh}(\d m(X_i,Y_i))_i=\omega_h(Y_i)_i+\Ad_{h^{-1}}\circ \omega_g(X_i)_i
\end{align}
for any composable arrows $g,h\in \G$ and any $k$ composable pairs of vectors $(X_i,Y_i)\in T_{(g,h)}\G^{(2)}$, i.e., $\d s_g(X_i)=\d t_h(Y_i)$. The space of all multiplicative forms is denoted by $\Omega^k_m(\G;\frak k)$.
\end{definition}
\subsubsection{Complexes of forms}\label{sec:intro_bss_complex} There is an important cochain complex in which multiplicative $k$-forms are precisely the 1-cocycles \cite{diff_cohomology}. It is the \textit{Bott--Shulman--Stasheff} complex of differential forms on the nerve of $\G\rra M$ (for a fixed form degree $k$), with values in the representation $\frak k$,
\begin{align}
   \label{eq:bss}
\begin{tikzcd}[ampersand replacement=\&, column sep=large, row sep=large]
	{\Omega^{k}(M;\frak k)} \& {\Omega^{k}(\G;\frak k)} \& {\Omega^{k}(\G^{(2)}; \frak k)} \& \cdots
	\arrow["{\delta^0}", from=1-1, to=1-2]
	\arrow["{\delta^1}", from=1-2, to=1-3]
	\arrow["{\delta^2}", from=1-3, to=1-4]
\end{tikzcd}
\end{align}
   where $\delta$ is the simplicial differential.  Apart from multiplicative forms, another class of forms appearing in this complex also plays a big role in our framework:  the 0-cocycles, known as \textit{invariant forms} on the base manifold. They are the forms which are in the kernel of differential at zeroth level,
\begin{align}
\begin{split}
      \label{eq:invariant}
      &\delta^0\colon \Omega^k(M;\frak k)\ra \Omega^k(\G;\frak k),\\
      &(\delta^0\gamma)_g= (s^*\gamma)_g - \Ad_{g^{-1}}\circ(t^*\gamma)_g,\quad (g\in \G).
\end{split}
\end{align}
The multiplicative forms arising as coboundaries, $\im(\delta^0)$, are called \textit{multiplicatively exact}, or sometimes also \textit{basic} or \textit{cohomologically trivial}, since their classes vanish in the cohomology of \eqref{eq:bss}.

Now, if we fix a MEC $\omega$, it induces a differential operator, called the \textit{horizontal exterior covariant derivative}, generalizing the well-known operator from principal bundles. It is defined as the map
\[
\D{}^\omega\colon \Omega^k(\G^{(\bullet)};\frak k)\ra \Omega^{k+1}(\G^{(\bullet)};\frak k), \quad \D{}^\omega=h^*\circ\d{}^\nabla.
\]
Here, $\nabla$ denotes the linear connection on the vector bundle $\frak k\ra M$ induced by the given MEC $\omega$ (see \cite{covariant_derivatives}*{Proposition 3.9} for a formula), and $h^*$ denotes the precomposition with the horizontal projection $h\colon T\G\ra E\hookrightarrow T\G$.
Importantly, it turns out $\D{}^\omega$ is a cochain map (Theorem 3.18 in \textit{loc.\,cit.}), meaning we get the following commutative diagram at any $k\geq 0$.
\begin{align}
\label{eq:bss_ideals}
\begin{tikzcd}[ampersand replacement=\&, column sep=large, row sep=large]
	{\Omega^{k+1}(M;\frak k)} \& {\Omega^{k+1}(\G;\frak k)} \& {\Omega^{k+1}(\G^{(2)}; \frak k)} \& \cdots \\
	{\Omega^k(M;\frak k)} \& {\Omega^{k}(\G;\frak k)} \& {\Omega^k(\G^{(2)};\frak k )} \& \cdots
	\arrow["{\delta}", from=1-1, to=1-2]
	\arrow["{\delta}", from=1-2, to=1-3]
	\arrow["{\delta}", from=1-3, to=1-4]
	\arrow["{\d{}^\nabla}", from=2-1, to=1-1]
	\arrow["{\delta}", from=2-1, to=2-2]
	\arrow["{\D{}^\omega}", from=2-2, to=1-2]
	\arrow["{\delta}", from=2-2, to=2-3]
	\arrow["{\D{}^\omega}", from=2-3, to=1-3]
	\arrow["{\delta}", from=2-3, to=2-4]
\end{tikzcd}
\end{align}
\subsubsection{Curvature of a MEC} The operator $\D{}^\omega$ in diagram \eqref{eq:bss_ideals} squares to zero if and only if the distribution $E=\ker\omega$ is involutive. As usual, involutivity is controlled by a curvature 2-form.
\begin{definition}
   \label{def:mec_curvature}
   The \textit{curvature} of a MEC $\omega\in \A(\G;\frak k)$ is the form $\Omega^\omega\in\Omega^2_m(\G;\frak k)$, defined as
   \[
   \Omega^\omega=\D{}^\omega\omega,\quad \text{or more explicitly,}\quad \Omega^\omega(X,Y)=(\d{}^{s^*\nabla}\omega)(h(X),h(Y)).
   \]
\end{definition}
By the middle square in diagram \eqref{eq:bss_ideals}, since the form $\omega$ is multiplicative (and hence, a cocycle), the curvature $\Omega^\omega$ is automatically a multiplicative form as well. Additionally, it satisfies the following properties, see \cite{mec}*{Proposition 2.24} or \cite{covariant_derivatives}*{Proposition 5.7}.
\begin{samepage}
\begin{enumerate}
   \item $\Omega^\omega =0$ if and only if $E=\ker\omega$ is involutive.
   \item The structure equation holds:
$
   \Omega^\omega=\d{}^{\nabla}\omega+\frac 12 [\omega,\omega]_{\frak k}.
$
   \item It satisfies the \textit{Bianchi identity}:
$
      \D{}^\omega \Omega^\omega = 0.
$
\end{enumerate}
\end{samepage}

\subsection{Infinitesimal multiplicative Ehresmann connections on Lie algebroids}
\label{sec:background_imecs}
To define IM connections on Lie algebroids, we use the approach with differential forms. Therefore, we first have to know what is the infinitesimal analogue of multiplicative forms from Definition \ref{def:multiplicative}. These abstract objects were found in \cite{spencer} for values in arbitrary representations, which is what we need. The research on infinitesimal multiplicative forms, with real or general coefficients, remains very active \cites{twisted_dirac, linear_mult, weil, im_forms, ve_mein, multiplicative_tensors, vb-valued, local, fat_lie_theory}.

\begin{definition}
\label{def:im_forms}
Given a bundle of ideals  $\frak k$ of a Lie algebroid $A$, a $\frak k$-valued \textit{IM form} (of degree $k$) on $A$ is a pair of maps $(L,l)$, where 
\begin{align*}
  L\colon \Gamma(A)\ra \Omega^k(M;\frak k) \text{ is linear},
\quad\quad
  l\colon \Gamma(A)\ra \Omega^{k-1}(M;\frak k)\text{ is $C^\infty(M)$-linear},
\end{align*}
and the failure of $C^\infty(M)$-linearity of $L$ is captured by the \textit{Leibniz identity},
\[
L(f\alpha)=f L(\alpha)+\d f\wedge l(\alpha).
\]
The pair $(L,l)$ must moreover satisfy the following \textit{IM conditions}, which are viewed as compatibility conditions with the Lie bracket on $A$:
\begin{align}
  L[\alpha,\beta]&=\L^A_{\alpha} L(\beta)-\L^A_{\beta} L(\alpha),\label{eq:c1}\tag{C.1}\\
  l[\alpha,\beta]&=\L^A_{\alpha} l(\beta)-\iota_{\rho(\beta)}L(\alpha),\label{eq:c2}\tag{C.2}\\
  \iota_{\rho(\alpha)} l(\beta)&=-\iota_{\rho(\beta)} l(\alpha).\label{eq:c3}\tag{C.3}
  \end{align}
  \noindent Here, the Lie derivative $\L_\alpha^A$ acting on $\Omega^k(M;\frak k)$ is defined for each $\alpha\in\Gamma(A), \gamma\in\Omega^k(M;\frak k)$ by
\begin{align*}
   (\L_\alpha^A \gamma)(X_i)_i=[\alpha, \gamma(X_i)_i] - \textstyle\sum_i \gamma(X_1,\dots,[\rho(\alpha),X_i],\dots,X_k).
\end{align*}
 Given an IM form $(L,l)$, the map $L$ is called the \textit{principal part}, and the bundle map $l$ its \textit{symbol}. The space of all IM forms is denoted $\Omega^k_{im}(A;\frak k)$, following the notation for forms on groupoids.
\end{definition}
\begin{definition}[\cite{mec}]
   \label{def:im_connection}
An \textit{infinitesimal multiplicative} (IM) \textit{connection} for a bundle of ideals $\frak k\subset A$ is a $\frak k$-valued IM 1-form, usually denoted by $(\C,v)$, whose symbol $v\colon A\ra \frak k$ satisfies $v|_{\frak k}=\id_\frak k.$ In other words, its symbol is a (left) splitting of the SES \eqref{eq:ses_bdl_ideals}, 
\begin{align}
\label{eq:splitting}
  \begin{tikzcd}[ampersand replacement=\&]
	0 \& {\frak k} \& A \& {B} \& 0.
	\arrow[from=1-1, to=1-2]
	\arrow[from=1-2, to=1-3]
	\arrow["v", bend left=40, from=1-3, to=1-2]
	\arrow[from=1-3, to=1-4]
	\arrow[from=1-4, to=1-5]
\end{tikzcd}
\end{align}
The space of all IM connections for a bundle of ideals $\frak k$ is denoted $\A(A;\frak k)$.
\end{definition}

To unpack this definition, following \cite{mec}, note that the splitting $A=\frak k\oplus H$, where $H=\ker v$, enables us to split the information contained within $\C\colon\Gamma(A)\ra\Omega^1(M;\frak k)$, into the following objects:
\begin{itemize}
   \item A linear connection $\nabla$ on the bundle of ideals $\frak k\ra M$, given by $\nabla=\C|_{\Gamma(\frak k)}$,
   \item A tensor $U\in \Gamma(H^*\otimes T^*M\otimes \frak k)$, given by $U=\C|_{\Gamma(H)}$ (see Definition \ref{def:curvature} for its meaning).
\end{itemize}
The IM conditions on $\C$ can then be rephrased entirely in terms of $\nabla$ and $U$, but we shall omit them (see \textit{loc.\,cit.}, Proposition 5.11). Thus, an IM connection can itself be viewed as a triple $(v,\nabla, U)$, which is already similar to connection triples from Definition \ref{def:gauge_triples}. A precise relation will be given in Definition \ref{def:curving}, but we need to first inspect the infinitesimal analogue of the complex from \sec\ref{sec:intro_bss_complex}.
\subsubsection{Complexes of forms}
As in the global picture, there is now a cochain complex in which IM forms are precisely 1-cocycles. It is called the \textit{Weil complex} (of cochains with fixed \textit{degree} $k$), 
\begin{align}
   \label{eq:weil_complex}
   \begin{tikzcd}[ampersand replacement=\&, column sep=large, row sep=large]
   {\Omega^{k}(M;\frak k)} \& {W^{1,k}(A;\frak k)} \& {W^{2,k}(A;\frak k)} \& \cdots
   \arrow["{\delta^0}", from=1-1, to=1-2]
   \arrow["{\delta^1}", from=1-2, to=1-3]  
   \arrow["{\delta^2}", from=1-3, to=1-4]
\end{tikzcd}
\end{align}
where $\delta$ denotes the Koszul differential of Weil cochains. The definition of Weil cochains and the differential is nontrivial, so we refer the reader to \cite{homogeneous}*{\sec 4} and \cite{covariant_derivatives}*{\sec 2.2}. This complex is fundamental to our theory, but we only need its description at zeroth and first level. Namely, the 0-cochains are already defined as $\Omega^k(M;\frak k)$, and 1-cochains are 
\[
W^{1,k}(A;\frak k)=\set{\text{pairs $(L,l)$ as in Definition \ref{def:im_forms}, without IM conditions}}.
\]
The Koszul differential at zeroth level is the infinitesimal analogue of equation \eqref{eq:invariant}. It reads
\begin{align}
   \begin{split}
      \label{eq:delta0}
      \delta^0\colon\Omega^k(M;\frak k)\ra \Omega^k_{im}(A;\frak k)
      ,\quad
      \big(\delta^0\gamma\big)(\alpha)=\big(\L_\alpha^A\gamma,\,\iota_{\rho(\alpha)}\gamma\big).
   \end{split}
\end{align}
\vspace{-1.5em}
\begin{definition}
   \label{def:invariant_forms}
   A $k$-form $\gamma\in \Omega^k(M;\frak k)$ is \textit{$A$-invariant}, if it is a 0-cocycle of the Weil complex $W^{\bullet,k}(A;\frak k)$. We denote the space of invariant $k$-forms by
   $
   \Omega_{\ainv}^k(M;\frak k)=\ker(\delta^0)\cap \Omega^k(M;\frak k).
   $
   Moreover, an IM form $(L,l)\in\Omega^k_{im}(A;\frak k)$ is called \textit{cohomologically trivial} (sometimes also \textit{multiplicatively exact} or \textit{basic}), if it is a 1-coboundary, i.e., if $(L,l)=\delta^0\gamma$ for some $\gamma\in\Omega^k(M;\frak k)$.
\end{definition}
\begin{remark}
   \label{rem:invariant_transversal_central}
   Any invariant form $\gamma$ must necessarily have values in the centre $z(\frak k)\subset \frak k$, and must necessarily be \textit{transversal}, i.e., $\iota_X\gamma=0$ for all $X\in T\F$. This follows directly from \eqref{eq:delta0}.
\end{remark}

For the definition of the Koszul differential at higher levels $\delta^{p\geq 1}$, see e.g., \cite{covariant_derivatives}*{eq.\ 2.9}. For instance, at the first level, $\delta^1$ is defined in such a way that $\ker(\delta^1)=\set{\frak k\text{-valued IM forms on }A}$.

\begin{remark}
   At higher levels, the Weil complex also carries important information. For example, the space $H^2(W^{\bullet,1}(A;\frak k)^\Hor)$ contains the obstruction class to the existence of IM connections, where \enquote{$\Hor$} denotes the so-called horizontal subcomplex of the Weil complex; see \cite{covariant_derivatives}*{\sec 5.1} for details.
\end{remark}

Now, similarly as with MECs on Lie groupoids, if we fix an IM connection $(\C,v)$, it induces a differential operator on the Weil complex, again called the \textit{horizontal exterior covariant derivative}. This operator was discovered in \cite{covariant_derivatives}*{\sec 4.3}. It is the map
\[
\D{}^{(\C,v)}\colon W^{\bullet,k}(A;\frak k)\ra W^{\bullet,k+1}(A;\frak k),
\] 
defined by $\D{}^{(\C,v)}=h^*\circ \d{}^\nabla$ as for groupoids, but now $\d{}^\nabla$ and $h^*$ have highly nontrivial expressions which we omit; see, e.g., formula (4.15) in \textit{loc.\,cit.}\  for a precise definition on 1-cochains. We will not need the formula for $\D{}^{(\C,v)}$, but we will need its properties. The most important property is that the following diagram commutes for all $k\geq 0$ (see Theorem 4.13 in \textit{loc.\,cit.}).
    \begin{align}
    \label{eq:weil_ideals}
    \begin{tikzcd}[ampersand replacement=\&, column sep=large, row sep=large]
        {\Omega^{k+1}(M;\frak k)} \& {W^{1,k+1}(A;\frak k)} \& {W^{2,k+1}(A;\frak k)} \& \cdots \\
        {\Omega^k(M;\frak k)} \& {W^{1,k}(A;\frak k)} \& {W^{2,k}(A;\frak k)} \& \cdots
        \arrow["{\delta}", from=1-1, to=1-2]
        \arrow["{\delta}", from=1-2, to=1-3]
        \arrow["{\delta}", from=1-3, to=1-4]
        \arrow["{\d{}^\nabla}", from=2-1, to=1-1]
        \arrow["{\delta}", from=2-1, to=2-2]
        \arrow["{\D{}^{(\C,v)}}", from=2-2, to=1-2]
        \arrow["{\delta}", from=2-2, to=2-3]
        \arrow["{\D{}^{(\C,v)}}", from=2-3, to=1-3]
        \arrow["{\delta}", from=2-3, to=2-4]
    \end{tikzcd}
    \end{align}
\begin{remark}
   \label{rem:supergeometry}
   In supergeometric terms, $\frak k$-valued Weil cochains are the differential forms on the $Q$-manifold $A[1]$, valued in the $Q$-bundle $\pi^*\frak k[1]\ra A[1]$  \cites{mehta_supergeometry, abc} ($\pi\colon A\ra M$ is the projection). The sections of this $Q$-bundle are $\Omega^\bullet(A;\frak k)=\Gamma(\wedge^\bullet A^*\otimes \frak k)$, which are the spaces $W^{\bullet,0}(A;\frak k)$ in the Weil complex at $k=0$, whereas the 1-forms are $W^{\bullet,1}(A;\frak k)$. By viewing IM connections as differential operators $\D{}\colon\Omega^\bullet(A;\frak k)\ra W^{\bullet, 1}(A;\frak k)$, they can be thought of as connections on this $Q$-vector bundle. In this language, IM conditions translate to compatibility with homological vector fields. 
\end{remark}
\subsubsection{Curvature of an IM connection} As with Lie groupoids, it turns out that the diagram \eqref{eq:weil_ideals} portrays a double complex, i.e., $\D{}^{(\C,v)}$ squares to zero, if and only if the curvature of $(\C,v)$ vanishes; the latter is defined as the following IM 2-form.
\begin{definition}
   \label{def:curvature}
The \textit{curvature} of an IM connection $(\C,v)$ is the IM 2-form $\Omega^{(\C,v)}\in\Omega^2_{im}(A;\frak k)$, 
\[\Omega^{(\C,v)}=\D{}^{(\C,v)}(\C,v).\]
Explicitly, if $h=\id-v\colon A\ra H\subset A$ is the horizontal projection, its principal part and symbol are
\begin{align}
\begin{split}
   \label{eq:im_curv}
   \Omega^{(\C,v)}(\alpha)&=\big(R^\nabla\cdot v\alpha+\d{}^\nabla U(h\alpha),U(h\alpha)\big).
\end{split}
\end{align}
\end{definition} 
The middle square in diagram \eqref{eq:weil_ideals} implies the curvature is indeed an IM 2-form since $(\C,v)$ is itself a 1-cocycle. It satisfies the \textit{infinitesimal Bianchi identity} \cite{covariant_derivatives}*{Theorem 5.12}:
\begin{align}
   \label{eq:bianchi}
   \D{}^{(\C,v)}\Omega^{(\C,v)}=0.
\end{align}
We see that the tensor $U$ appears as the symbol of the curvature IM 2-form. The IM conditions imply the curvature of the splitting $v$, as in \eqref{eq:curvature_splitting}, is described by $U$ as $F^\sigma(\beta_1,\beta_2)=U(\sigma\beta_1)(\rho\beta_2)$. 

Finally, we can relate the connection triples from Definition \ref{def:gauge_triples} with IM connections. 
\begin{definition}[\cite{covariant_derivatives}*{\sec 5.3}]
   \label{def:curving}
   An IM connection $(\C,v)$ is \textit{primitive} if the cohomological class of its curvature vanishes, $[\Omega^{(\C,v)}]=0$, in the Weil cohomology group $H^1(W^{\bullet, 2}(A;\frak k))$. That is, 
   \[
   \Omega^{(\C,v)}=\delta^0 F.
   \]
   for some 2-form $F\in \Omega^2(M;\frak k)$. Any such 2-form $F$ is called a \textit{curving} of the IM connection $(\C,v).$ In other words, an IM connection is primitive if and only if its curvature is multiplicatively exact.
\end{definition}
A triple $((\C,v),F)$ consisting of an IM connection and a curving $F$, is the same as a connection triple from Definition \ref{def:gauge_triples}: the relation is given by $\C(\alpha)=\nabla(v\alpha)+\iota_{\rho(\alpha)}F$. The left square in diagram \eqref{eq:weil_ideals} shows that the Bianchi identity \eqref{eq:bianchi} is captured by the invariance of the \textit{curvature 3-form} $G\coloneq\d{}^\nabla F$, that is, $\delta^0 G=0$. In particular, $G$ must be transversal and $z(\frak k)$-valued, see Remark \ref{rem:invariant_transversal_central}. 
\begin{remark}
   The terminology for \enquote{curvings} comes from bundle gerbes \cites{bundle_gerbes_original, bundle_gerbes, gerbes}. For us, they are simply a means to model the curvature with a form on the base. For instance, on a principal bundle $P$, the curvature can be viewed as an $\ad(P)$-valued 2-form on the base, thus, on principal bundles, curvings always exist and are unique. As we will see in \sec\ref{sec:ym_gerbes}, curvings also always exist for connections on bundle gerbes (but they are not unique). The adjective \enquote{primitive} is not used derogatorily, but rather describes something which is the first known instance of its kind. 
\end{remark}

We note that a connection triple with $G=0$ exists if and only if the Lie algebroid $A$ is of so-called \textit{principal type}; these are the fibred products of an arbitrary Lie algebroid with a transitive one. See \cite{mec}*{\sec 3.5} for the definition and \cite{covariant_derivatives}*{\sec 5.3.1} for the proof of this characterization.



\section{Generalized Yang--Mills theory}
Since IM connections and curvings are more natural to work with than connection triples from Definition \ref{def:gauge_triples}, we begin by denoting the domain of the action functional \eqref{eq:ym_action} by
  \begin{align*}
    \DD(A;\frak k)=\set*{((\C,v),F)\in\A(A;\frak k)\times \Omega^2(M;\frak k)\given 
      \delta^0F=\Omega^{(\C,v)}
    }.
  \end{align*}
We shall refer to any such triple $((\C,v),F)$ as a connection triple as well. 
To prove Theorem \ref{thm:main_theorem}, we need to vary the action. To this end, it is important to realize that the space $\DD(A;\frak k)$ decomposes into affine spaces determined by the cohomological classes of IM connections in the Weil cohomology. These affine spaces are the constraints of our action functional. In the end, we will vary the functional in the directions tangential to these affine spaces, which amounts to finding solutions to our constrained variational problem.
\subsection{Affine deformations of connection triples}
\label{sec:affine_deformations}
The space of all IM connections $\A(A;\frak k)$ is itself clearly an affine space modelled on the vector space of \textit{horizontal} IM 1-forms, denoted $\Omega^1_{im}(A;\frak k)^\Hor$, consisting of pairs $(L,l)\in \Omega^1_{im}(A;\frak k)$ whose symbol satisfies $l|_{\frak k}=0$.  
Notice that any multiplicatively exact 1-form $(L,l)=\delta^0\gamma$ is clearly horizontal by \eqref{eq:delta0}, and the following shows that deforming by them preserves primitivity of IM connections.
\begin{lemma}
   \label{lemma:affine_primitive}
   The deformation of a primitive IM connection by a multiplicatively exact IM 1-form is again a primitive IM connection. More precisely, if $((\C,v),F)$ is a connection triple, then
\begin{align}
   \label{eq:affine_primitive}
   \big((\C,v)+\delta^0 \gamma, F^\gamma\big),\quad F^\gamma\coloneq F+\d{}^\nabla\gamma-\tfrac 12 [\gamma,\gamma]_{\frak k},
\end{align}
   is also a connection triple, for any $\gamma\in \Omega^1(M;\frak k)$. Moreover, under any such affine deformation, the 3-curvature $G=\d{}^\nabla F$ remains unchanged.
\end{lemma}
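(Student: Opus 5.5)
The plan is to work with the concrete description of connection triples from Definition~\ref{def:gauge_triples}, and translate through the dictionary $\C(\alpha)=\nabla(v\alpha)+\iota_{\rho(\alpha)}F$. This avoids the explicit formula for $\D{}^{(\C,v)}$.

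First I identify the data $(\sigma',\nabla')$ underlying $(\C',v')\coloneq(\C,v)+\delta^0\gamma$. By \eqref{eq:delta0}, the symbol becomes $v'(\alpha)=v(\alpha)+\gamma(\rho\alpha)$. Since $\rho|_{\frak k}=0$, we still have $v'|_{\frak k}=\id_{\frak k}$. Moreover $\delta^0\gamma$ is a 1-cocycle (the Weil complex is a complex), so $(\C',v')$ is again an IM connection. The corresponding right splitting is $\sigma'(\beta)=\sigma(\beta)-\gamma(\rho_B\beta)$. For $\xi\in\Gamma(\frak k)$ we have $\rho(\xi)=0$, so $\L^A_\xi\gamma=[\xi,\gamma]$. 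Hence the new connection is $\nabla'=\nabla-[\gamma,-]_{\frak k}$ (up to the sign convention for $\ad$), i.e.\ $\nabla'_X\xi=\nabla_X\xi+[\xi,\gamma(X)]$. I would then check directly that $\C'(\alpha)=\nabla'(v'\alpha)+\iota_{\rho\alpha}F^\gamma$. Splitting $\alpha$ into its $\frak k$- and $H$-components, this reduces to the leafwise identities verified below.

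Next I verify conditions (i)--(v) for $(\sigma',\nabla',F^\gamma)$.
\begin{enumerate}
\item[(i)] This holds because $\ad_{\gamma(X)}$ is a derivation of the bracket on $\frak k$.
\item[(iv)] We have $\nabla'_{\rho_B\beta}=[\sigma\beta,-]-[\gamma(\rho_B\beta),-]=[\sigma'\beta,-]$.
\item[(ii)] Use the standard formula $R^{\nabla-\ad_\gamma}=R^\nabla-\d{}^\nabla(\ad_\gamma)+\tfrac12[\ad_\gamma,\ad_\gamma]$. Since $\nabla$ preserves the bracket, $\d{}^\nabla\ad_\gamma=\ad_{\d{}^\nabla\gamma}$, and by Jacobi $\tfrac12[\ad_\gamma,\ad_\gamma]=\ad_{\frac12[\gamma,\gamma]}$. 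Together with $R^\nabla=[-,F]$ this gives $R^{\nabla'}=[-,F^\gamma]$.
\item[(v)] Expand
\[F^{\sigma'}(\beta_1,\beta_2)=\sigma'[\beta_1,\beta_2]-[\sigma\beta_1-\gamma\rho\beta_1,\ \sigma\beta_2-\gamma\rho\beta_2].\]
Replace $[\sigma\beta_i,-]$ by $\nabla_{\rho\beta_i}$ using (iv), and use that $\rho_B$ preserves brackets. This produces $F^\sigma+\rho_B^*(\d{}^\nabla\gamma-\tfrac12[\gamma,\gamma])$, so (v) follows from (v) for the original triple.
\end{enumerate}

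Finally, for (iii) and the invariance of $G$, compute
\[
\d{}^{\nabla'}F^\gamma=\d{}^\nabla F+\d{}^\nabla\d{}^\nabla\gamma-[\d{}^\nabla\gamma,\gamma]-[\gamma,F]-[\gamma,\d{}^\nabla\gamma]+\tfrac12[\gamma,[\gamma,\gamma]].
\]
Here $\d{}^\nabla\d{}^\nabla\gamma=R^\nabla\wedge\gamma=[\gamma,F]$ (with the appropriate sign) cancels the $[\gamma,F]$ term. The two mixed terms cancel by graded symmetry of the bracket on $\frak k$-valued 1- and 2-forms. The triple bracket vanishes by the graded Jacobi identity. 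Hence $G'=G$, so transversality (iii) is inherited.

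The main obstacle I expect is purely sign bookkeeping: the conventions for $[\cdot,\cdot]_{\frak k}$ on $\frak k$-valued forms, for $R^\nabla=[-,F]$, and for $\L^A_\xi\gamma=[\xi,\gamma]$ must all be fixed consistently. The sign of $\tfrac12[\gamma,\gamma]$ in $F^\gamma$ is precisely what makes (ii), (v) and the cancellation in $G'$ work simultaneously. I would fix conventions by first checking (iv) and (v), which are the most transparent, and then use these conventions in (ii) and in the computation of $G'$.
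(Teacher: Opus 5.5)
Your proof is correct. For the first part it takes a different route from the paper; the computation showing that $G$ is unchanged is the same as the paper's.

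\textbf{The paper's route.} The paper never unpacks the triple. It uses three ingredients:
\begin{enumerate}
\item the expansion \eqref{eq:expansion_inf} of the curvature under a horizontal deformation, quoted from prior work;
\item the identity $\mathbf c_2(\delta^0\gamma)=-\tfrac12\delta^0[\gamma,\gamma]_{\frak k}$;
\item the commutation $\D{}^{(\C,v)}\circ\delta^0=\delta^0\circ\d{}^\nabla$, from the left square of \eqref{eq:weil_ideals}.
\end{enumerate}
Together these give $\Omega^{(\C,v)+\delta^0\gamma}=\delta^0F+\delta^0\d{}^\nabla\gamma-\tfrac12\delta^0[\gamma,\gamma]_{\frak k}=\delta^0F^\gamma$ in one line.

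\textbf{Your route.} You identify the new data explicitly: $v'=v+\gamma\circ\rho$, $\sigma'=\sigma-\gamma\circ\rho_B$, and $\nabla'=\nabla+[\cdot,\gamma]_{\frak k}$. You check (i)--(v) by hand. You then pass back through the dictionary $\C'(\alpha)=\nabla'(v'\alpha)+\iota_{\rho(\alpha)}F^\gamma$, which does reduce to $\nabla_{\rho(\alpha)}=[h\alpha,-]$, that is, to (iv) for the original triple. Your signs in (ii), (v) and the dictionary are all consistent.

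\textbf{Comparison.} The paper's route is shorter. It also explains conceptually where $-\tfrac12[\gamma,\gamma]_{\frak k}$ comes from: it is the quadratic term $\mathbf c_2$, which applies to arbitrary horizontal deformations. However, it rests on formulas from the cited work that are not reproduced here. Yours is self-contained, needing only elementary connection calculus plus the stated equivalence between connection triples and IM connections with curvings.

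\textbf{A possible trim.} You already know $(\C',v')$ is IM. So (i) and (iv) follow from the IM conditions (C.1) and (C.2), and (v) follows from the IM conditions together with the dictionary. The substantive checks are therefore the dictionary, (ii) and (iii).
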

\begin{remark}
   \label{rem:gla}
   Regarding the notation for the bracket of $\frak k$-valued forms: the bracket $[\cdot,\cdot]_{\frak k}$ on $\frak k$ induces a graded Lie algebra structure on $\Omega^\bullet(M;\frak k)$, given by $[\cdot,\cdot]_{\frak k}\colon \Omega^k(M;\frak k)\times \Omega^l(M;\frak k)\ra \Omega^{k+l}(M;\frak k)$,
   \[
   [\gamma_1\otimes \xi_1,\gamma_2\otimes \xi_2]_{\frak k}=\gamma_1\wedge \gamma_2\otimes [\xi_1,\xi_2]_{\frak k} \text{ for simple tensors }\gamma_i\otimes \xi_i\in\Omega^\bullet (M;\frak k).
   \]
\end{remark}
\begin{proof}
   This lemma is proved in \cite{covariant_derivatives}*{Proposition 5.30}; we give an idea of proof here. Under any affine deformation $(\C,v)\ra (\C,v)+(L,l)$, where $(L,l)\in\Omega^1_{im}(A;\frak k)^\Hor$, the curvature changes as
      \begin{align}
      \label{eq:expansion_inf}
      \Omega^{(\C,v)+(L,l)}=\Omega^{(\C,v)}+ \D{}^{(\C,v)}(L,l)+\mathbf c_2(L,l).
      \end{align}
      Here, $\mathbf c_2\colon\Omega^1_{im}(A,\frak k)^\Hor\rightarrow \Omega^2_{im}(A,\frak k)^\Hor$ is a certain map which is homogeneous of degree two; see Theorem 5.19 in \cite{covariant_derivatives} for a proof of \eqref{eq:expansion_inf} and the formula of $\mathbf c_2$. Importantly, it satisfies the equality $\mathbf c_2(\delta^0\gamma)=-\frac 12\delta^0[\gamma,\gamma]$, i.e., we have the following diagram.
      \[
      \begin{tikzcd}[row sep=large]
         {\Omega^1_{im}(A;\frak k)^\Hor} & {\Omega^2_{im}(A;\frak k)^\Hor} \\
         {\Omega^1(M;\frak k)} & {\Omega^2(M;\frak k)}
         \arrow["{\mathbf c_2}", from=1-1, to=1-2]
         \arrow["{\delta^0}", from=2-1, to=1-1]
         \arrow["{\gamma\mapsto-\frac 12[\gamma,\gamma]}"', from=2-1, to=2-2]
         \arrow["{\delta^0}"', from=2-2, to=1-2]
      \end{tikzcd}
      \]
      Together with the left square in diagram \eqref{eq:weil_ideals}, this already shows that $F^\gamma$ is a curving of $(\C,v)+\delta^0\gamma$, proving the first part of the proposition. The remarkable fact that the 3-curvature does not change with this deformation, is a simple calculation which we provide here for completeness:
      \begin{align}
\begin{split}
  \label{eq:3_curvature_invariant}
  G^\gamma&=\d{}^{\nabla^\gamma}F^\gamma=(\d{}^\nabla+[\cdot,\gamma]_{\frak k})\big(F+\d{}^\nabla\gamma-\tfrac 12[\gamma,\gamma]_{\frak k}\big)\\
  &=G+R^\nabla\wedge\gamma-[\d{}^\nabla\gamma,\gamma]_{\frak k}+[F,\gamma]_{\frak k}+[\d{}^\nabla\gamma,\gamma]_{\frak k}-\tfrac 12[[\gamma,\gamma]_{\frak k},\gamma]_{\frak k}=G.
\end{split}
\end{align}
Here, we denoted by $\nabla^\gamma$ the connection on $\frak k$ associated to the IM connection $(\C,v)+\delta^0\gamma$, 
\begin{align}
\label{eq:nabla_gamma}
   \nabla^\gamma=\nabla+[\cdot,\gamma],
\end{align} 
noted that the following identity holds on forms of degree $k$,
\begin{align}
   \label{eq:d_nabla_gamma}
   \d{}^{\nabla^\gamma}=\d{}^\nabla+(-1)^k[\cdot,\gamma]_{\frak k},
\end{align}
and observed that the Jacobi identity for $[\cdot,\cdot]_{\frak k}$ implies $[[\gamma,\gamma]_{\frak k},\gamma]_{\frak k}=0$.
\end{proof}
On the other hand, there also exists another class of affine deformations of connection triples, which comes from non-uniqueness of curvings. Namely, given a connection triple $((\C,v),F)$, we may simply deform the curving $F$ by an invariant 2-form $\beta\in\Omega^2_{\ainv}(M;\frak k)$, and get a connection triple
\begin{align}
   \label{eq:affine_invariant}
   ((\C,v),F+\beta).
\end{align}
One difference with deformations from the previous proposition is that with these deformations, the 3-curvature also deforms: $G\ra G+\d{}^\nabla\beta$. This will be important when varying the functional.


By combining these two distinct types of affine deformations, we now obtain a decomposition of the set $\DD(A;\frak k)$ into affine spaces, determined by cohomological classes of IM connections. More precisely, let us fix a class $\chi$ defined by a primitive IM connection, that is,
\[
\chi\in \arc(A;\frak k)/\im\delta^0\subset H^1(W^{\bullet,1}(A;\frak k)),
\]
where $\arc(A;\frak k)\subset \A(A;\frak k)$ denotes the subset of primitive IM connections. The following lemma shows that the constrained subspace of connection triples 
\[
\DD_\chi(A;\frak k)=\set*{((\C,v),F)\in \A(A;\frak k)\times \Omega^2(M;\frak k)\given \delta^0 F=\Omega^{(\C,v)}, [\C,v]=\chi}
\]
is an affine space.
\begin{lemma}
\label{lemma:D_chi_affine}
  Let $\frak k$ be a bundle of ideals of a Lie algebroid $A$. For $\chi$ as above, $\DD_\chi(A;\frak k)$ is an affine space, modelled on the vector space $Q_\chi$ defined as the quotient
\[\begin{tikzcd}
	0 & {\Omega^1_\ainv(A;\frak k)} & {\Omega^1(M;\frak k)\oplus \Omega^2_\ainv(M;\frak k)} & {Q_\chi} & 0,
	\arrow[from=1-1, to=1-2]
	\arrow["j_\chi", from=1-2, to=1-3]
	\arrow[from=1-3, to=1-4]
	\arrow[from=1-4, to=1-5]
\end{tikzcd}\]
where $j_\chi(\gamma)=(\gamma,-\d{}^\nabla\gamma)$ and
$\nabla$ is the connection induced by an arbitrary representative of $\chi$.
\end{lemma}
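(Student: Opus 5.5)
The plan is to define an action of the vector space $\Omega^1(M;\frak k)\oplus\Omega^2_\ainv(M;\frak k)$ on $\DD_\chi(A;\frak k)$ by combining the two affine deformations above, and then to show that this action is transitive with stabilizer exactly $\im j_\chi$. The stabilizer is independent of the base point, so the action descends to a free and transitive action of $Q_\chi$.

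For $((\C,v),F)\in\DD_\chi(A;\frak k)$ and $(\gamma,\beta)$ I set
\[
(\gamma,\beta)\cdot((\C,v),F)=\big((\C,v)+\delta^0\gamma,\ F+\d{}^\nabla\gamma-\tfrac12[\gamma,\gamma]_{\frak k}+\beta\big).
\]
Lemma \ref{lemma:affine_primitive} together with \eqref{eq:affine_invariant} shows that this is again a connection triple. Its class is still $\chi$, since we only added a coboundary to $(\C,v)$.

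The first difficulty is that this formula is not additive in $\gamma$, because of the quadratic term. I would fix it in one of two ways.
\begin{itemize}
\item First observe that $\nabla$ is the same for every representative of $\chi$. Indeed, the restriction of $\delta^0\gamma$ to $\frak k$ is $\alpha\mapsto[\alpha,\gamma]$, and on $\Gamma(\frak k)$ this vanishes only if $\gamma$ is central. So this is not automatic, and I would handle it as follows.
\item Since $\Omega^2_\ainv$ is central (Remark \ref{rem:invariant_transversal_central}) and $\delta^0$ is linear, the cleanest route is to check the composition law
\[
(\gamma_2,\beta_2)\cdot(\gamma_1,\beta_1)\cdot x=(\gamma_1+\gamma_2,\ \beta_1+\beta_2+c)\cdot x
\]
directly, using \eqref{eq:nabla_gamma} and \eqref{eq:d_nabla_gamma}. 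Then $\d{}^{\nabla^{\gamma_1}}\gamma_2-\tfrac12[\gamma_2,\gamma_2]=\d{}^\nabla\gamma_2+[\gamma_1,\gamma_2]\cdot(\pm)-\tfrac12[\gamma_2,\gamma_2]$, which combines with $-\tfrac12[\gamma_1,\gamma_1]$ into $-\tfrac12[\gamma_1+\gamma_2,\gamma_1+\gamma_2]$, with matching signs. So the action is genuinely additive with $c=0$.
\end{itemize}
This identity is the computational core. I expect the main obstacle to be the sign bookkeeping in it, and in particular confirming that $\delta^0\gamma_2$ computed with respect to the deformed structure agrees with the original one. That agreement holds because $\delta^0$ depends only on $A$.

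Transitivity. Take two elements $((\C,v),F)$ and $((\C',v'),F')$ of $\DD_\chi(A;\frak k)$. Since they have the same class, $(\C',v')=(\C,v)+\delta^0\gamma$ for some $\gamma$. Acting by $(\gamma,0)$ gives a triple with the same IM connection as the second one. Both curvings of $(\C',v')$ then satisfy $\delta^0(\cdot)=\Omega^{(\C',v')}$, so their difference $\beta$ lies in $\ker\delta^0=\Omega^2_\ainv(M;\frak k)$, and acting by $(\gamma,\beta)$ reaches the target.

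Stabilizer. The pair $(\gamma,\beta)$ fixes $x$ iff two conditions hold. The first is $\delta^0\gamma=0$, i.e.\ $\gamma\in\Omega^1_\ainv$. The second is $\beta=-\d{}^\nabla\gamma+\tfrac12[\gamma,\gamma]$. Since invariant forms are central, $[\gamma,\gamma]=0$, so the second condition reads $\beta=-\d{}^\nabla\gamma$. This is exactly $\im j_\chi$.

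Well-definedness of $j_\chi$. I need $\d{}^\nabla\gamma$ to be invariant, which follows from the left square of \eqref{eq:weil_ideals}: $\delta^0\d{}^\nabla\gamma=\D{}^{(\C,v)}\delta^0\gamma=0$. I also need $\d{}^\nabla\gamma$ to be independent of the representative. Changing the representative changes $\nabla$ by $[\cdot,\gamma']$, and this kills central $\gamma$. Finally, $j_\chi$ is injective, so the sequence is exact.
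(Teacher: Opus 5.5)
Your proposal is correct and follows the paper's proof. It uses the same deformation formula to define the affine structure, and the same two facts for well-definedness of $j_\chi$: $\nabla|_{z(\frak k)}$ is independent of the representative, and the left square of \eqref{eq:weil_ideals}. You also carry out the ``straightforward computation'' the paper omits, namely additivity via \eqref{eq:d_nabla_gamma} (the cross term is $-[\gamma_1,\gamma_2]_{\frak k}$), transitivity, and the stabilizer equal to $\im j_\chi$. Delete the opening claim of your first bullet that $\nabla$ is the same for every representative of $\chi$: it is false in general, since only its restriction to $z(\frak k)$ is, and your argument correctly uses $\nabla^{\gamma_1}=\nabla+[\cdot,\gamma_1]$ where it matters.
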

\begin{proof}
  First, note that when restricted to the centre $z(\frak k)$, any linear connection $\nabla$ as above does not depend on the choice of a representative of $\chi$, by identity \eqref{eq:nabla_gamma}. This is important since invariant forms are centre-valued, so $\d{}^\nabla$ appearing in the statement is also independent of such a choice; moreover, $\d{}^\nabla$ preserves invariance of forms by diagram \eqref{eq:weil_ideals}, which altogether shows that the map $j_\chi$ is well-defined and only depends on the class $\chi$. The affine structure on $\DD_\chi(A;\frak k)$ is defined by
\begin{align}
  \label{eq:affine_DD_chi}
    ((\C,v),F)+\llbracket \gamma,\beta\rrbracket=((\C,v)+\delta^0 \gamma, F+\d{}^\nabla\gamma-\tfrac 12[\gamma,\gamma]+\beta)
\end{align}
  for any $\gamma\in\Omega^1(M;\frak k)$ and $\beta\in\Omega^2_\ainv(M;\frak k)$, where the double bracket $\llbracket \cdot,\cdot \rrbracket$ denotes the equivalence class of $(\gamma,\beta)$ in $Q_\chi$. That this is a well-defined affine structure on $\DD_\chi(A;\frak k)$ is a straightforward computation that follows from the definition of the quotient $Q_\chi$.
\end{proof}
\begin{remark}
  \label{rem:decompositions_affine}
  There is another way of looking at this lemma, which offers an explanation why the model space for $\DD_\chi(A;\frak k)$ is a quotient. Note that the set $\DD(A;\frak k)$ carries two equivalence relations:
  \begin{align*}
    ((\tilde\C,\tilde v),\tilde F)\sim_1 ((\C,v),F)&\iff (\tilde \C,\tilde v)-(\C,v)=\delta^0\gamma\text{ and }\tilde F=F^{\gamma}\text{ for some }\gamma \in\Omega^1(M;\frak k),\\
    ((\tilde\C,\tilde v),\tilde F)\sim_2 ((\C,v),F)&\iff (\tilde\C,\tilde v)=(\C,v)\text{ and }\delta^0(\tilde F- F)=0.
  \end{align*}
 The affine deformations \eqref{eq:affine_primitive} and \eqref{eq:affine_invariant} describe the equivalence classes defined by relations $\sim_1$ and $\sim_2$, respectively. The closure of the union of $\sim_1$ and $\sim_2$ yields another equivalence relation:
  \begin{align}
    ((\tilde\C,\tilde v),\tilde F)\sim_3 ((\C,v),F)
    &{\ \iff\ } (\tilde \C,\tilde v)-(\C,v)=\delta^0\gamma\text{ and }\tilde F=F^{\gamma}+\beta, \nonumber\\
    &\phantom{\ \iff\ }\text{for some }\gamma \in\Omega^1(M;\frak k), \beta\in\Omega^2_{\ainv}(M;\frak k),\nonumber\\
    &{\ \iff\ } [\tilde \C,\tilde v]=[\C,v]\in H^{1}(W^{\bullet,1}(A;\frak k)).\label{eq:tilde_3}
  \end{align}
  The quotient $Q_\chi$ appears since that the intersection of $\sim_1$ and $\sim_2$ is not trivial, and the equivalence classes of $\sim_3$ are precisely the affine spaces $\DD_\chi(A;\frak k)$.
\end{remark}
\subsection{Invariance of metric on the bundle of ideals}
Before diving into the proof of Theorem \ref{thm:main_theorem}, we fix some notation and make a few preliminary comments regarding the extra data needed for the construction of the variational problem. 

Since we are fixing a (pseudo) Riemannian metric and orientation on the base $M$ as well as a metric on $\frak k$, we get a pairing on $\frak k$-valued compactly supported differential forms on $M$:
\begin{align*}
  \innerr\cdot\cdot\colon \Omega_c^k(M;\frak k)\times \Omega_c^k
 (M;\frak k)\ra \R,\quad \innerr{\gamma_1}{\gamma_2}=\int_M\inner{\gamma_1}{\gamma_2}_{\frak k}\vol_M.
\end{align*}
This is a non-degenerate pairing since both metrics on $\frak k$ and $M$ are non-degenerate; the proof of this fact amounts to the fundamental lemma of calculus of variations. Given any linear connection $\nabla$ on $\frak k$, the \textit{formal adjoint} to $\d{}^\nabla$ with respect to $\innerr\cdot\cdot$, denoted in the Theorem \ref{thm:main_theorem} by $(\d{}^\nabla)^*$, is the unique map characterized by the equality 
$
\innerr{\d{}^\nabla\gamma_1}{\gamma_2}=\innerr{\gamma_1}{(\d{}^\nabla)^*\gamma_2},
$
for any forms $\gamma_1$ and $\gamma_2$ of degrees $k-1$ and $k$, respectively (see \cite{analysis_on_mflds}*{Proposition 1.2.11} for existence and uniqueness).

Similarly to the case of principal bundles, the metric on $\frak k$ is assumed to be invariant with respect to the adjoint action $A\curvearrowright\frak k$. The main reason for this assumption is gauge invariance (Theorem \ref{thm:gauge_invariance}).
\begin{definition}
   Given a bundle of ideals $\frak k$ of a Lie algebroid $A$, a metric $\inner\cdot\cdot_{\frak k}\in \Gamma(S^2\frak k^*)$ is said to be \textit{$\ad$-invariant}, if it satisfies the following identity, for any $\alpha\in\Gamma(A)$ and $\xi,\eta\in \Gamma (\frak k)$:
\begin{align}
   \label{eq:ad_invariance}
       \rho(\alpha)\inner\xi\eta_{\frak k}=\inner{[\alpha,\xi]}{\eta}_{\frak k}+\inner\xi{[\alpha,\eta]}_{\frak k}.
\end{align}
\end{definition}
A necessary condition for the existence of such a metric is that the typical fibre of $\frak k$ is a Lie algebra of compact type \cite{duistermaat}*{Theorem 3.6.2}. On the other hand, a sufficient condition is that $A$ is integrable by a proper groupoid $\G$. In this case, one first constructs an \textit{$\Ad$-invariant} metric, i.e., \[\inner\xi\eta_{\frak k}=\inner{\Ad_g\xi}{\Ad_g\eta}_{\frak k}, \quad\text{for all $g\in \G$ and $\xi,\eta\in\frak k_{s(g)}$}.\] This is done by means of a standard averaging argument using a Haar system and a cutoff function \cite{measures_on_stacks}, which is a groupoid  version of the construction of an $\Ad$-invariant inner product on the Lie algebra of a compact Lie group. Differentiation then shows that the obtained $\Ad$-invariant metric is also $\ad$-invariant.

Given an IM connection $(\C,v)$ for $\frak k\subset A$, $\ad$-invariance of the metric $\inner\cdot\cdot_{\frak k}$ is, however, not enough to guarantee that the induced linear connection $\nabla=\C|_{\Gamma(\frak k)}$ is a metric connection. This is now only guaranteed in the orbital directions,
\begin{align}
       \rho(\alpha)\inner\xi\eta_{\frak k}=\inner{\nabla_{\rho(\alpha)}\xi}{\eta}_{\frak k}+\inner\xi{\nabla_{\rho(\alpha)}\eta}_{\frak k},
\end{align}
which is not hard to see by combining equation \eqref{eq:ad_invariance} with condition \eqref{eq:c2} of an IM connection, or just looking at Definition \ref{def:gauge_triples} (iv). So, to ensure $\nabla$ is compatible with $\inner\cdot\cdot_{\frak k}$, one needs an additional assumption; we note that this condition is not needed in Theorems \ref{thm:main_theorem} and \ref{thm:gauge_invariance}.
\begin{definition}
   \label{def:transversal_metric_compatibility}
   An IM connection $(\C,v)$ for a bundle of ideals $\frak k\subset A$ is \emph{transversally compatible} with a given $\ad$-invariant metric $\inner{\cdot}{\cdot}_{\frak k}$, if every point $x\in M$ admits tangent vectors $(X_i)_i\subset T_xM$ ($i=1,\dots,\codim_x\F$) which induce a basis of the normal space $\nu_x(\F)=T_xM/\im\rho_x$, satisfying
   \begin{align}
   \label{eq:transversal_compatibility}
      X_i\inner{\xi}{\eta}_{\frak k}=\inner{\nabla_{X_i}\xi}{\eta}_{\frak k} + \inner{\xi}{\nabla_{X_i}\eta}_{\frak k}
   \end{align}
   for any $\xi,\eta\in\Gamma(\frak k)$.
In fact, this is a condition on the class $\chi=[\C,v]$, since ad-invariance and equation \eqref{eq:nabla_gamma} imply that if this holds for one representative IM connection in $\chi$, it holds for all.
\end{definition}

This property is favorable for the following reason. In the case when the connection $\nabla$ on $\frak k$ is compatible with $\inner\cdot\cdot_{\frak k}$, the formal adjoint $(\d{}^\nabla)^*$ has the following well-known expression on $\Omega_c^k(M;\frak k)$:
\begin{align}
\label{eq:delta_nabla}
(\d{}^\nabla)^*=(-1)^k\star^{-1}\!{\d{}^\nabla{\star}},
\end{align}
where $\star$ is the Hodge star operator with respect to the chosen metric and orientation on $M$. For completeness, we give a short proof. Define the map $\wedge\colon\Omega^k(M;\frak k)\times\Omega^l(M;\frak k)\ra \Omega^{k+l}(M)$ given on  simple tensors by $(\delta_1\otimes \xi_1)\wedge (\delta_2\otimes \xi_2)=\delta_1\wedge\delta_2 \inner{\xi_1}{\xi_2}_{\frak k}$. Metric compatibility implies
\[
\d(\gamma_1\wedge\gamma_2)=\d{}^\nabla\gamma_1\wedge \gamma_2+(-1)^{\deg(\gamma_1)}\gamma_1\wedge\d{}^\nabla\gamma_2,
\]
for $\frak k$-valued forms $\gamma_i$ of arbitrary degree. If $\gamma_1$ and $\gamma_2$ are of degrees $k-1$ and $k$ (resp.), taking $\star\gamma_2$ in place of $\gamma_2$, integrating $\d(\gamma_1\wedge {\star}\gamma_2)$ over $M$, and using Stokes' theorem, proves the claim:
\[
\innerr{\d{}^\nabla\gamma_1}{\gamma_2}=\int_M \d{}^\nabla\gamma_1\wedge \star \gamma_2=(-1)^k\int_M\gamma_1\wedge \d{}^\nabla{\star}\,\gamma_2=\innerr{\gamma_1}{(-1)^k\star^{-1}\!\d{}^\nabla{\star}\,\gamma_2}.
\]

\subsection{Euler--Lagrange equation and relation}
\label{sec:euler-lagrange}
We now arrive to the final piece of the proof of Theorem \ref{thm:main_theorem}, the derivation of the Euler--Lagrange equation \eqref{eq:YM.I} and relation \eqref{eq:YM.II}. We will prove it without the assumption of compactness on the base manifold $M$, by instead being explicit where compact support is required. In the notation from previous section, our action functional reads
\begin{align*}
   &\S\colon \DD_c(A;\frak k)\ra \R,\quad \S((\C,v),F)=\innerr{F}{F}+\mu \innerr{G}{G},\\
   &\text{where }\DD_c(A;\frak k)=\set*{((\C,v),F)\in\A(A;\frak k)\times \Omega^2_c(M;\frak k)\given 
      \delta^0F=\Omega^{(\C,v)}
    }.
\end{align*}
\begin{proof}[Proof of Theorem \ref{thm:main_theorem}]
First observe that a similar result as in Lemma \ref{lemma:D_chi_affine} holds for $\DD_c(A;\frak k)$, if we replace the spaces of forms on $M$ with compactly supported ones. Hence, to find the solutions of the constrained variational problem for the constraint $[\C,v]=const.$, we have to expand the expression
\begin{align*}
   \S\big(((\C,v),F)+\lambda\llbracket \gamma,\beta\rrbracket\big)&=\S((\C,v)+\lambda\delta^0\gamma,F^{\lambda\gamma}+\lambda\beta)\\
   &=\innerr{F^{\lambda\gamma}+\lambda\beta}{F^{\lambda\gamma}+\lambda\beta}+\mu\innerr{\d{}^{\nabla^{\lambda\gamma}}(F^{\lambda\gamma}+\lambda\beta)}{\d{}^{\nabla^{\lambda\gamma}}(F^{\lambda\gamma}+\lambda\beta)},
\end{align*}
for any $\gamma\in\Omega^1_c(M;\frak k)$ and $\beta\in\smash{\Omega^2_{\ainv,c}}(M;\frak k)$. Here, the terms read
\[
F^{\lambda\gamma}+\lambda\beta= F+\lambda(\d{}^\nabla\gamma+\beta)-\tfrac{\lambda^2}2[\gamma,\gamma]\quad\text{and}\quad \d{}^{\nabla^{\lambda\gamma}}(F^{\lambda\gamma}+\lambda\beta)=G+\lambda\d{}^\nabla\beta,
\]
where we already used that $\beta$ is centre-valued, and the second part of Lemma \ref{lemma:affine_primitive}. Hence, we obtain the following expansion in the parameter $\lambda$:
\begin{align}
   \label{eq:expansion_lambda_euler_lagrange}
\S\big(((\C,v),F)+\lambda\llbracket \gamma,\beta\rrbracket\big)&=\innerr FF+\mu\innerr GG\\
   &+\lambda\big(2\innerr{F}{\d{}^\nabla\gamma+\beta}+2\mu\innerr G{\d{}^\nabla \beta}\big)\nonumber\\
   &+\lambda^2(\innerr{\d{}^\nabla\gamma+\beta}{\d{}^\nabla\gamma+\beta}-2\innerr F{\tfrac 12[\gamma,\gamma]}+\mu \innerr{\d{}^\nabla\beta}{\d{}^\nabla\beta})\nonumber\\
   &+\O(\lambda^3)\nonumber.
\end{align}
We explicitly wrote out the quadratic term as well, for the later purpose of computing the Hessian and the tangent space to the space of solutions.
Looking at the linear term, we see that the triple $((\C,v),F)$ is critical if and only if for any $\gamma\in\Omega^1_c(M;\frak k)$ and $\beta\in\Omega^2_{\ainv,c}(M;\frak k)$, we have
\begin{align}
   0=\deriv\lambda 0\S\big(((\C,v),F)+\lambda\llbracket \gamma,\beta\rrbracket\big)=2(\innerr{(\d{}^\nabla)^* F}{\gamma}+\innerr{F+\mu(\d{}^\nabla)^* G}{\beta}).
\end{align}
Finally, this is zero for all $\gamma$ and $\beta$ if and only if the following PDE and PDR are fulfilled, 
\begin{align*}
   (\d{}^\nabla)^* F &=0,\\F+\mu\,(\d{}^\nabla)^* G &\perp \Omega^2_{\ainv,c}(M;\frak k).\tag*{\qedhere}
\end{align*}
\end{proof}
\begin{remark}
   The Euler--Lagrange conditions above make sense even when $F$ is not compactly supported, but in that case, the integral in the action \eqref{eq:ym_action} may not converge. In other words, even if a connection triple $((\C,v),F)$ fails to be in the domain of the action functional, its curvature pair can still satisfy the Euler--Lagrange conditions; for instance, see Example \ref{example:self_dual} (i).
\end{remark}

\subsection{Gauge invariance}
This section is devoted to the proof of Theorem \ref{thm:gauge_invariance}. Recall from Definition \ref{def:infinitesimal_gauge_transformations} that a \textit{gauge transformation} is a Lie algebroid automorphism $\Theta$ of $A\Ra M$ covering an orientation-preserving isometry $\vartheta$ of the base $M$, restricting on $\frak k\subset A$ to an isometry for the metric $\inner\cdot\cdot_{\frak k}$. The assumption $\Theta(\frak k)\subset \frak k$ on $\Theta$ is equivalent to being an automorphism of the short exact sequence \eqref{eq:ses_bdl_ideals}. 
\[\begin{tikzcd}[column sep=large, row sep=large]
	0 & {\frak k} & A & {B} & 0 \\
	0 & {\frak k} & A & {B} & 0
	\arrow[from=1-1, to=1-2]
	\arrow[hook, from=1-2, to=1-3]
	\arrow["{\Theta|_{\frak k}}", from=1-2, to=2-2]
	\arrow["\phi", from=1-3, to=1-4]
	\arrow["{\Theta}", from=1-3, to=2-3]
	\arrow[from=1-4, to=1-5]
	\arrow["{\Theta_B}", from=1-4, to=2-4]
	\arrow[from=2-1, to=2-2]
	\arrow[hook, from=2-2, to=2-3]
	\arrow["\phi", from=2-3, to=2-4]
	\arrow[from=2-4, to=2-5]
\end{tikzcd}\]
Here, $\Theta_B$ on $B=A/\frak k$ is defined in the obvious way, namely $\Theta_B[\alpha]=[\Theta(\alpha)].$ These three maps are algebroid morphisms covering a diffeomorphism $\vartheta\colon M\ra M$ on the base.

\begin{example} 
\label{ex:gauge_transformations}   
The most important classes of gauge transformations are the following ones, which automatically satisfy the requirement of being an isometry for any $\ad$-invariant metric $\inner\cdot\cdot_{\frak k}$ on $\frak k$. \
\begin{enumerate}[label={(\roman*)}]
   \item Flows of inner derivations $[\alpha,\cdot]$ of the Lie algebroid $A$, for sections $\alpha\in\Gamma(A)$ such that the flow of the (complete) vector field $\rho(\alpha)$ is an orientation-preserving isometry of $M$. In particular, any isotropy-valued section $\alpha\in\Gamma(\ker\rho)$ satisfies this extra condition since in this case the base map is just $\id_M$. We refer the reader to \eqref{eq:flow_of_derivation} for the definition of the flow of a derivation.
  \item If $A$ is integrable, we can consider algebroid automorphisms of the form $\Ad_b=(I_b)_*$, where $I_b$ is the inner automorphism by a bisection $b\in\mathrm{Bis}(\G)$ of an integrating $s$-connected Lie groupoid $\G$ of $A$, whose base map $\vartheta=s\circ b$ is an orientation-preserving isometry. In particular, this holds for all static bisections, i.e., those with $s\circ b=\id_M$. This relies on Lemma \ref{lem:pairing_A_invariant_implies_G_invariant} to ensure $\Ad_b$ is an isometry on $\frak k$; see the discussion after its proof for the definition of inner automorphisms $I_b$. This recovers the classical definition of gauge transformations on principal bundles, since they correspond precisely to static bisections of the gauge groupoid of a given principal bundle.
\end{enumerate}
\end{example}

To prove Theorem \ref{thm:gauge_invariance}, let us restate it in a more precise way. For this, we must first define the pullback of IM connections, and more generally, IM forms. Observe that on $\Omega^\bullet(M;\frak k)$ the pullback is defined on simple tensors as
$\Theta^*(\gamma\otimes\xi)=\vartheta^*\gamma\otimes (\Theta^{-1})_*\xi$. 
  The pullback of an IM form $(L,l)$ reads
\[
(\Theta^*(L,l)) (\alpha) = \big(\Theta^* (L(\Theta\alpha)), \Theta^* (l(\Theta\alpha))\big).
\]
In particular, we can pull back an IM connection $(\C,v)$ and again obtain an IM connection, denoted $(\C_\Theta,v_\Theta)=\Theta^*(\C,v)$, whose symbol is indeed a splitting since $\Theta(\frak k)\subset\frak k$.
The induced linear connection $\nabla^\Theta\coloneq \C_\Theta|_{\Gamma(\frak k)}$ is simply the pullback connection,
\begin{align*}
  \nabla^\Theta_X\xi=(\Theta^{-1})_*\nabla_{\vartheta_*X}(\Theta_*\xi),
\end{align*}
for any $X\in\vf(M)$ and $\xi\in\Gamma(\frak k)$. With this in mind, it is easy to see the following.
\begin{proposition}
  For a given Lie algebroid $A\Ra M$ with an $\ad$-invariant metric $\inner\cdot\cdot_{\frak k}$ on a bundle of ideals $\frak k$, transversal metric compatibility of IM connections is stable under gauge transformations.
\end{proposition}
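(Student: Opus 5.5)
The plan is to show that if $(\C,v)$ is transversally compatible with $\inner\cdot\cdot_{\frak k}$ at every point, then so is $(\C_\Theta,v_\Theta)=\Theta^*(\C,v)$. The induced connection is the pullback $\nabla^\Theta_X\xi=(\Theta^{-1})_*\nabla_{\vartheta_*X}(\Theta_*\xi)$. Fix $x\in M$ and set $y=\vartheta(x)$. By hypothesis there are vectors $Y_1,\dots,Y_r\in T_yM$, with $r=\codim_y\F$, whose classes form a basis of $\nu_y(\F)$ and which satisfy \eqref{eq:transversal_compatibility}. I will take $X_i\coloneq(\d\vartheta_x)^{-1}Y_i\in T_xM$ as the candidate vectors at $x$.

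First I check that the $X_i$ induce a basis of $\nu_x(\F)$. Since $\Theta$ is a Lie algebroid automorphism covering $\vartheta$, we have $\rho\circ\Theta=\d\vartheta\circ\rho$. Hence $\d\vartheta_x(\im\rho_x)=\im\rho_y$, so $\d\vartheta_x$ induces an isomorphism $\nu_x(\F)\cong\nu_y(\F)$. In particular $\codim_x\F=\codim_y\F$, and the classes of the $X_i$ form a basis.

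Next I verify \eqref{eq:transversal_compatibility} for $\nabla^\Theta$ along $X_i$. Compatibility is a pointwise first-order condition, so I extend $X_i$ to a local vector field $\tilde X_i$ and test against sections $\xi,\eta$ of $\frak k$. The key input is that $\Theta|_{\frak k}$ is a fibrewise isometry covering $\vartheta$, which gives $\inner{\xi}{\eta}_{\frak k}=\vartheta^*\inner{\Theta_*\xi}{\Theta_*\eta}_{\frak k}$. Then
\[
X_i\inner{\xi}{\eta}_{\frak k}=Y_i\inner{\Theta_*\xi}{\Theta_*\eta}_{\frak k}
=\inner{\nabla_{Y_i}\Theta_*\xi}{\Theta_*\eta}_{\frak k}+\inner{\Theta_*\xi}{\nabla_{Y_i}\Theta_*\eta}_{\frak k}.
\]
Applying the isometry $\Theta^{-1}$ on the fibre $\frak k_y\to\frak k_x$ turns the right-hand side into $\inner{\nabla^\Theta_{X_i}\xi}{\eta}_{\frak k}+\inner{\xi}{\nabla^\Theta_{X_i}\eta}_{\frak k}$ at $x$, which is the required identity.

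The main point needing care is bookkeeping: the pushforward $\Theta_*\xi$ is a section over $M$ whose value at $y$ is $\Theta(\xi_x)$, and the formula for $\nabla^\Theta$ must be evaluated consistently at $x$ versus $y$. There is also a well-definedness issue. Definition \ref{def:transversal_metric_compatibility} is phrased as a condition on the class $\chi$, so I should note that $\Theta^*$ maps the class $[\C,v]$ to $[\C_\Theta,v_\Theta]$. This holds because $\Theta^*$ commutes with $\delta^0$, since $\Theta$ preserves brackets and anchors. Strictly, though, this is not needed, since I have argued directly for the representative $\Theta^*(\C,v)$.
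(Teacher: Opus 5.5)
Your proof is correct, but it handles the ``transversal'' part differently from the paper. The paper's argument has two steps. First, since $\Theta|_{\frak k}$ is an isometry, the pullback connection $\nabla^\Theta$ is metric whenever $\nabla$ is. Second, ad-invariance lets one pass between transversal and full compatibility. Implicitly, ad-invariance together with \eqref{eq:c2} (equivalently, condition (iv) of Definition \ref{def:gauge_triples}) makes \eqref{eq:transversal_compatibility} hold automatically along every vector of $\im\rho_x$. The defect $X\mapsto X\inner{\xi}{\eta}_{\frak k}-\inner{\nabla_X\xi}{\eta}_{\frak k}-\inner{\xi}{\nabla_X\eta}_{\frak k}$ is linear in $X\in T_xM$, so transversal compatibility is the same as $\nabla$ being a metric connection. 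Stability then reduces to the standard fact that isometric pullback preserves metric connections.

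You instead transport the transversal frame itself, $X_i=(\d{\vartheta}_x)^{-1}Y_i$. This needs the extra observation $\rho\circ\Theta=\d{\vartheta}\circ\rho$, so that normal spaces correspond. You then verify the identity only along these vectors.

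Your route never uses ad-invariance. It therefore proves the statement, with Definition \ref{def:transversal_metric_compatibility} read literally, for any fibre metric for which $\Theta|_{\frak k}$ is an isometry. The paper's route avoids all bookkeeping with normal spaces and shows that the particular choice of the $X_i$ is immaterial. Your closing remark about classes is correct but, as you note, not needed.
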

\begin{proof}
  Recall any gauge transformation $\Theta$ restricts on $\frak k$ to an isometry. Hence, compatibility with a metric $\inner\cdot\cdot_{\frak k}$ is stable under gauge transformations, i.e., compatibility of $\nabla=\C|_{\Gamma(\frak k)}$ implies compatibility of $\nabla^\Theta= \C_\Theta|_{\Gamma(\frak k)}$. By $\ad$-invariance, so is transversal compatibility (Definition \ref{def:transversal_metric_compatibility}). 
\end{proof} 
\pagebreak

\begin{theorem}
   \label{thm:gauge_invariance_restated}
	Assume the setting of Theorem \ref{thm:main_theorem}.\ The generalized Yang--Mills action functional \eqref{eq:ym_action} is invariant under the pullbacks of infinitesimal gauge transformations $(\Theta, \vartheta)$, 
	\[
	\S(\Theta^*(\C,v),\Theta^* F)=\S((\C,v),F),
	\]
	for any triple $((\C,v),F)\in \DD_c(A;\frak k)$. In particular, the solution space of Euler--Lagrange equation and relation \eqref{eq:YM.I} and \eqref{eq:YM.II} is invariant under gauge transformations.
\end{theorem}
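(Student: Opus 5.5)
The proof has three steps: show that $\Theta^*$ preserves the domain $\DD_c(A;\frak k)$, show that it intertwines the curvature 3-forms, and then use that $\Theta^*$ preserves the pointwise pairing and the volume form.

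\emph{Step 1: the pullback stays in the domain.} We check that $(\Theta^*(\C,v),\Theta^*F)\in\DD_c(A;\frak k)$. Since $\Theta$ is a Lie algebroid automorphism covering $\vartheta$, pullback by $\Theta$ commutes with all the structure entering the Weil complex. Concretely, a direct computation from \eqref{eq:delta0} gives
\[
\Theta^*(\L^A_{\Theta\alpha}\gamma)=\L^A_\alpha\Theta^*\gamma,\qquad \Theta^*(\iota_{\rho(\Theta\alpha)}\gamma)=\iota_{\rho(\alpha)}\Theta^*\gamma,
\]
because $\rho\circ\Theta=\vartheta_*\circ\rho$ and $\Theta$ preserves brackets. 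Hence $\delta^0\Theta^*=\Theta^*\delta^0$. The same naturality shows that $\Theta^*$ maps IM forms to IM forms, maps IM connections to IM connections, and satisfies $\Omega^{\Theta^*(\C,v)}=\Theta^*\Omega^{(\C,v)}$. This last identity is where the explicit formula \eqref{eq:im_curv} is used: $\nabla^\Theta$ is the pullback connection, $h_\Theta=\Theta^{-1}h\Theta$, and $U_\Theta=\Theta^*U$. Pullback connections satisfy $R^{\nabla^\Theta}=\Theta^*R^\nabla$ and $\d{}^{\nabla^\Theta}\Theta^*=\Theta^*\d{}^\nabla$. Combining, $\delta^0\Theta^*F=\Theta^*\delta^0F=\Theta^*\Omega^{(\C,v)}=\Omega^{\Theta^*(\C,v)}$. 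Compact support is preserved because $\vartheta$ is a diffeomorphism.

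\emph{Step 2: the curvature 3-form pulls back.} From $\d{}^{\nabla^\Theta}\Theta^*=\Theta^*\d{}^\nabla$ we get directly
\[
G_\Theta=\d{}^{\nabla^\Theta}\Theta^*F=\Theta^*G .
\]

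\emph{Step 3: the action is invariant.} For $\gamma_1,\gamma_2\in\Omega^k(M;\frak k)$ we have, pointwise,
\[
\inner{\Theta^*\gamma_1}{\Theta^*\gamma_2}_{\frak k}=\vartheta^*\inner{\gamma_1}{\gamma_2}_{\frak k}.
\]
This holds because $\Theta^{-1}|_{\frak k}$ is a fibrewise isometry of $\inner\cdot\cdot_{\frak k}$ covering $\vartheta^{-1}$, and $\vartheta$ is an isometry of the base metric, so it preserves the induced metric on $\Lambda^kT^*M$. Since $\vartheta$ is an orientation-preserving isometry, $\vartheta^*\vol_M=\vol_M$. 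The change of variables formula then gives $\innerr{\Theta^*\gamma_1}{\Theta^*\gamma_2}=\innerr{\gamma_1}{\gamma_2}$. Applying this to $F$ and, via Step 2, to $G$ yields $\S(\Theta^*(\C,v),\Theta^*F)=\S((\C,v),F)$.

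\emph{Step 4: solutions go to solutions.} $\Theta^*$ maps $\DD_\chi$ affinely onto $\DD_{\Theta^*\chi}$. The model space is preserved because $\Theta^*$ preserves $\Omega^2_{\ainv}$ and commutes with $\delta^0$ and $\d{}^\nabla$, and for each $\lambda$ we have
\[
\Theta^*\big(((\C,v),F)+\lambda\llbracket\gamma,\beta\rrbracket\big)=\Theta^*((\C,v),F)+\lambda\llbracket\Theta^*\gamma,\Theta^*\beta\rrbracket .
\]
So $\Theta^*$ takes constrained critical points of $\S$ to constrained critical points. By Theorem \ref{thm:main_theorem}, the solution space of \eqref{eq:YM.I} and \eqref{eq:YM.II} is therefore preserved. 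Alternatively, one can verify directly that $(\d{}^{\nabla^\Theta})^*\Theta^*=\Theta^*(\d{}^\nabla)^*$, which follows from the isometry property of Step 3.

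The main obstacle I expect is Step 1: the naturality $\Omega^{\Theta^*(\C,v)}=\Theta^*\Omega^{(\C,v)}$, together with checking that $\Theta^*$ respects the IM conditions (C.1)–(C.3). I would handle this by verifying naturality of $\L^A$, $\iota_\rho$ and the Lie bracket term by term. Everything else is formal.
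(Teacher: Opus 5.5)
Your proposal is correct and follows essentially the same route as the paper. Naturality of the curvature under $\Theta^*$ shows that $\Theta^*F$ is a curving of $\Theta^*(\C,v)$ with 3-curvature $\Theta^*G$. The integrand is then invariant because $\Theta|_{\frak k}$ and $\vartheta$ are isometries and $\vartheta$ preserves orientation.

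The one real difference is how you get $\Omega^{\Theta^*(\C,v)}=\Theta^*\Omega^{(\C,v)}$. You check it term by term from the explicit formula \eqref{eq:im_curv}, while the paper deduces it from the naturality $\D{}^{\Theta^*(\C,v)}\Theta^*=\Theta^*\D{}^{(\C,v)}$ of the horizontal covariant derivative on the whole Weil complex. You also spell out the passage to solutions via the affine structure on $\DD_\chi$, which the paper leaves implicit.
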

\begin{proof}
  First note that the curvature of the pullback IM connection is the pullback of its curvature, 
\begin{align}
   \label{eq:naturality_curvature}
     \Omega^{\Theta^*(\C,v)}=\Theta^*\Omega^{(\C,v)}.
\end{align}
  This is an immediate consequence of naturality of the horizontal exterior covariant derivative on the Weil complex. More precisely, a Lie algebroid automorphism $(\Theta,\vartheta)$ induces an automorphism of the Weil complex $W^{\bullet,k}(A;\frak k)$ for any fixed $k\geq 0$, by pulling back along $\Theta$, similarly as for IM forms. This pullback, also denoted $\Theta^*$, commutes with the horizontal exterior covariant derivative,
\begin{align}
   \label{eq:naturality_D}
    \D{}^{\Theta^*(\C,v)}\Theta^*=\Theta^*\D{}^{(\C,v)}.
\end{align}
  An argument for why this equality holds goes as follows. Recall that $\smash{\D{}^{(\C,v)}}$ is defined as $\D{}^{(\C,v)}=h^*\circ \d{}^\nabla$, and note $h^*$ and $\d{}^\nabla$ satisfy the following identities (in the notation of \cite{covariant_derivatives}):
  \begin{align}
   \label{eq:naturality_dh}
    \d{}^{\nabla^\Theta}\Theta^*=\Theta^*\d{}^\nabla\,\text{ and }\,h^*_\Theta\Theta^*=\Theta^*h^*,
  \end{align} 
  where $h_\Theta^*$ is the horizontal projection of Weil cochains with respect to the IM connection $\Theta^*(\C,v)$. The first identity here is clear, and the second is most easily seen in the $\vb$-algebroid picture, that is, it follows from $\d\Theta\circ \smash{h^{TA}_\Theta}=h^{TA}\circ\d\Theta$, where $h^{TA}_\Theta\colon TA\ra E_\Theta=\d\Theta^{-1}(E)$ is the horizontal projection with respect to the IM connection $\Theta^*(\C,v)$. Applying \eqref{eq:naturality_D} to $(\C,v)$ yields \eqref{eq:naturality_curvature}.
  
  Now, if $F$ is a curving of $(\C,v)$ with 3-curvature $G$, then $\Theta^*F$ is a curving of $\Theta^*(\C,v)$ with 3-curvature $\Theta^*G$. This follows from the already observed fact that $\Theta^*$ is an automorphism of the Weil cochain complex, and the left equation in \eqref{eq:naturality_dh}. Thus, both the curving and its 3-curvature satisfy
  \begin{align}
    \Theta^*F=(\Theta^{-1})_*\circ \vartheta^*F,\quad \Theta^*G=(\Theta^{-1})_*\circ \vartheta^*G,
  \end{align}
  where  the pullback $\vartheta^*\colon \Omega^\bullet(M;\frak k)\ra \Omega^\bullet(M;\vartheta^*\frak k)$ is defined on simple tensors by pulling back the form and leaving the coefficients alone. This concludes the proof of the theorem, since a gauge transformation preserves both metrics on $\frak k$ and $M$, as well as the orientation of $M$. 
\end{proof}
\begin{remark}
   The proof shows that gauge invariance is, in a certain sense, almost tautological, as it merely reflects the naturality of the structures involved in the framework. An alternative approach would be to work directly with the connection triples from Definition \ref{def:gauge_triples} through explicit computations, which would come at the expense of conceptual clarity.
\end{remark}

\subsection{Instantons and special cases}
We now inspect the equation \eqref{eq:YM.I} and relation \eqref{eq:YM.II} more closely, for some special cases. In this section only, we shall restrict our attention to the connection triples which have the property of transversal metric compatibility with the given ad-invariant metric $\inner\cdot\cdot_{\frak k}$ (Definition \ref{def:transversal_metric_compatibility}), to ensure that the formal adjoint has the favorable expression \eqref{eq:delta_nabla}.
\subsubsection{The case of vanishing invariant forms}
\label{sec:vanishing_invariant_forms}
Since invariant forms (of degree 2 and 3) play an important role in our framework, let us look at what happens when they are trivial:  $\Omega^k_\ainv(M;\frak k)=0$, for $k\geq 2$. For instance, this happens in the following two situations.
\begin{itemize}
   \item The leaves of the orbit foliation $\F$ have codimension at most 1, 
   \item The typical fibre $\frak g$ of $\frak k$ has a vanishing center; this is equivalent to semisimplicity of $\frak g$, since the existence of an $\ad$-invariant metric on $\frak k$ implies the Lie algebra $\frak g$ is of compact type. The curving $F$ is then uniquely determined by $\nabla$ with relation $R^\nabla=[-,F]_{\frak k}$ \cite{covariant_derivatives}*{Prop.\ 5.33}.
\end{itemize}
In this case, the 3-curvature vanishes, and the relation \eqref{eq:YM.II} is vacuously true, so the solutions to the variational problem are precisely the triples whose 2-curvature $F$ satisfies \eqref{eq:YM.I}. As a consequence, we have that $F$ is a harmonic 2-form with respect to the Hodge--Laplacian,
\begin{align}
   \label{eq:harmonic_F}
   \Delta F= \big(\!\d{}^\nabla(\d{}^\nabla)^*+(\d{}^\nabla)^*\d{}^\nabla\big) F=0.
\end{align}

Now, if additionally the base manifold $M$ is 4-dimensional, we can extend the usual notion of (anti) self-dual connections on principal bundles. Namely, a connection triple $(\sigma,\nabla,F)$ is called \textit{(anti) self-dual} if its 2-curvature satisfies
\begin{align}
   \label{eq:instantons_4d}
   F=\pm \star F.
\end{align}
Due to Bianchi identity and equation \eqref{eq:delta_nabla}, such an $F$ is automatically a solution of \eqref{eq:YM.I}. 
\begin{remark}
   \label{rem:self_consistency}
   For the equation \eqref{eq:instantons_4d} to be self-consistent, the index $s$ of the pseudo Riemannian metric on the base must be even (unless we work with the complexified bundle of ideals $\frak k$). This is due to the identity for the squared Hodge star operator, which holds on forms of degree $k$:
   \begin{align}
      \label{eq:starstar}
      \star\star=(-1)^{s+k(n-k)},
   \end{align}
   where $n=\dim M$ and $s$ is the index (the number of negative components) of the pseudo Riemannian metric on $M$. In particular, pseudo Riemannian manifolds with signature $(1,3)$, such as the Minkowski space, cannot admit real (anti) self-dual solutions, whereas they may exist on Riemannian manifolds.
\end{remark}

\subsubsection{Instantons in 5D}
As already mentioned in the introduction, the generalization to the non-transitive setting allows us to introduce the class of  \textit{(anti) self-dual} connection triples over a 5-dimensional base manifold $M$. These are the triples $(\sigma,\nabla,F)$ whose  curvature pair $(F,G)$ satisfies
\begin{align}
   \label{eq:instantons_5d}
   G=\pm\star F.
\end{align}
Note that $\d{}^\nabla G=R^\nabla\wedge F=[F,F]_{\frak k}=0$ always holds, so \eqref{eq:instantons_5d} implies \eqref{eq:YM.I}, and moreover, $\nabla$ is necessarily flat, since \eqref{eq:instantons_5d} implies $F$ is centre-valued. The curvature pair further satisfies \eqref{eq:YM.II} if we choose $\mu=(-1)^s$, where $\mu$ is the constant from Theorem \ref{thm:main_theorem}.\ Indeed, equations \eqref{eq:delta_nabla} and \eqref{eq:instantons_5d} imply that the curvature pair satisfies the PDE 
\begin{align}
   \label{eq:YM.IIe}
   F+\mu\, (\d{}^\nabla)^* G =0,
\end{align}
which is clearly stronger than the PDR \eqref{eq:YM.II}. In contrast with harmonicity that we saw in equation \eqref{eq:harmonic_F} for the 4-dimensional case, we now have
\[
\Delta F= \big(\!\d{}^\nabla(\d{}^\nabla)^*+(\d{}^\nabla)^*\!\d{}^\nabla\big) F=-\tfrac 1\mu F,
\]
that is, $F$ is an eigenform of the Hodge--Laplacian induced by $\nabla$, with eigenvalue $-\frac 1\mu=(-1)^{s+1}$. 

\begin{remark}
   \label{rem:riemannian_contradiction}
   We observe that in the Riemannian case, the Hodge--Laplace operator $\Delta$ can only have non-negative eigenvalues. Indeed, if $\psi\in\Omega^\bullet_c(M;\frak k)$ is its eigenform with eigenvalue $\lambda$, then
     \[
    \lambda\innerr{\psi}{\psi}=\innerr{\Delta \psi}{\psi}=\innerr{\d{}^\nabla\psi}{\d{}^\nabla\psi}+\innerr{(\d{}^\nabla)^*\psi}{(\d{}^\nabla)^*\psi}\geq 0.
  \]
  Therefore, the structure constant must be negative, $\mu< 0$, as a necessary condition for the equation \eqref{eq:YM.IIe} to admit a solution. However, for \eqref{eq:instantons_5d} to imply \eqref{eq:YM.IIe} (and thus \eqref{eq:YM.II}), we must have $\mu=(-1)^s$, which now brings us to a reversed situation compared to Remark \ref{rem:self_consistency}: the Riemannian case does not allow for self-dual connection triples whose 2-curvature is compactly supported (unless we complexify our setting), and the pseudo-Riemannian case in general does. In the following simple example, (i) shows that this restriction does not apply to forms that are not compactly supported, and (ii) shows this restriction indeed applies only to the Riemannian case.
\end{remark}
\begin{example}[Totally intransitive case]
   \label{example:self_dual}
   Suppose $V=\R_M$ is the trivial line bundle over a manifold $M$, with trivial Lie bracket and anchor, and take the whole $\frak k=V$ as the bundle of ideals. We have $\Omega_\ainv^\bullet (M;\frak k)=\Omega^\bullet(M)$, and the connection triples are just $(\sigma=0, \nabla,F)$, where $F\in\Omega^2(M)$ and $\nabla$ is a flat connection on $V$ (with no additional compatibility conditions). In the following two examples of self-dual connection triples on $V$ in 5 dimensions, we let $\nabla$ be the canonical flat connection.
   \begin{enumerate}
   \item Let $M=\R^5$ be equipped with the Euclidean metric and coordinates $(x_0,\dots, x_4)$. We define
   \[
   F=\mathrm e^{\pm x_4}(\d x_0\wedge \d x_1+\d x_2\wedge \d x_3),
   \]
   and now it is easy to see there holds $\d F=\pm\star F$, so $(\sigma=0,\nabla, F)$ defines an (anti)  self-dual connection triple on $V$.
   \item For a compact pseudo-Riemannian example, let $M=(\R/2\pi\Z)^5$ be the flat Lorentzian 5-torus equipped with coordinates $(x_0,\dots,x_4)$ and metric $g=-\d x_0^2+\sum_{i\geq 1}\d x_i^2$. Define 
   \[
   F= \cos(x_4)\alpha -\sin(x_4)\beta,
   \]
   where $\alpha=\d x_0\wedge \d x_1$ and $\beta = \d x_2\wedge \d x_3$. We have $\star \alpha= -\d x_4\wedge \beta$ and $\star \beta =\d x_4\wedge \alpha$, hence 
   \[\star F=-\cos x_4\d x_4\wedge \beta - \sin x_4\d x_4\wedge \alpha = \d F.\]
   Therefore, $(\sigma=0,\nabla,F)$ defines a self-dual connection triple. We remark that it is more naturally expressed with complex forms. Namely, letting $\omega=\alpha+i\beta$, we have $\star \omega = i \d x_4\wedge \omega$, and the form $F_{\mathbb C}=\mathrm e^{i x_4}\omega\in\Omega^2(M;\mathbb C)$ satisfies $\d F_{\mathbb C}=\star F_{\mathbb C}$; the form $F$ is its real part.
   \end{enumerate}
\end{example}
   \begin{remark}
      \noindent This example, as well as equation \eqref{eq:starstar} and Remark \ref{rem:riemannian_contradiction}, indicates it should be interesting to consider \textit{complex} Lie algebroids \cite{complex_algebroids}. An interesting example of a complex Lie algebroid already comes from complexifying a real Lie algebroid (\textit{loc.\,cit.}\ \sec 2).  
      In this context, introducing \textit{complex} connection triples should allow for a self-duality relation $G=\pm i\star F$. Indeed, given any constant $\mu\in\R$, for \eqref{eq:YM.IIe} to hold,
      \[
      G=\pm\tfrac 1{\sqrt{(-1)^s\mu}}\star F,
      \]
      must be fulfilled, hence $\sgn\mu=(-1)^s$ is no longer a requirement if we work in the complex setting. We will, however, not explore this direction further in this paper.
   \end{remark}

\subsection{Tangent space to the space of solutions}
We now obtain the (formal) tangent space to the space of solutions of the constrained variational problem \eqref{eq:ym_action}. For simplicity of notation, we shall work with a compact base manifold $M$, but we note that the tangent space can also be obtained for the non-compact case by restricting to compactly supported forms. As usual, we identify the tangent space of the affine space $\DD_\chi(A;\frak k)$, at any connection triple $(\sigma,\nabla,F)$, with the vector space on which it is modelled, i.e., $Q_\chi$. 

\begin{proposition}
  \label{prop:formal_tangent_space}
  Assume the setting of Theorem \ref{thm:main_theorem}, and let $(\sigma,\nabla,F)\in\DD_\chi(A;\frak k)$ be a solution to the constrained variational problem \eqref{eq:ym_action}. The Hessian at $(\sigma,\nabla,F)$ is given by
  \begin{align}
\begin{split}
      \label{eq:hessian}
     H_{(\sigma,\nabla,F)}\llbracket \gamma,\beta\rrbracket&=\innerr{(\d{}^\nabla)^*(\d{}^\nabla\gamma+\beta)-\widehat F(\gamma)}{\gamma}\\
     &+\innerr{(\d{}^\nabla\gamma+\beta)+\mu(\d{}^\nabla)^*\!\d{}^\nabla\beta}{\beta},
\end{split}
\end{align}
where $\widehat F\colon \Omega^1(M;\frak k)\ra \Omega^1(M;\frak k)$ is given by $\widehat F(\gamma)=\star^{-1}[\gamma,\star F]$.  
The formal tangent space at $(\sigma,\nabla,F)$ to the space  of all solutions in $\DD_\chi(A;\frak k)$ is given by all $\llbracket \gamma,\beta \rrbracket\in Q_\chi$ which satisfy 
  \begin{gather}
    (\d{}^\nabla)^*(\d{}^\nabla\gamma+\beta)=\widehat F(\gamma),\label{eq:fts_1}\\
    (\d{}^\nabla\gamma+\beta)+\mu(\d{}^\nabla)^*\!\d{}^\nabla\beta\perp \Omega^2_\ainv(M;\frak k).\label{eq:fts_2}
  \end{gather}
\end{proposition}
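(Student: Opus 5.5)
The plan is to read off the Hessian directly from the $\lambda$-expansion \eqref{eq:expansion_lambda_euler_lagrange} from the proof of Theorem \ref{thm:main_theorem}. The Hessian, as a quadratic form on $Q_\chi$, is half the second derivative at $\lambda=0$ of $\S\big(((\C,v),F)+\lambda\llbracket\gamma,\beta\rrbracket\big)$, i.e.\ the $\lambda^2$-coefficient
\[
\innerr{\d{}^\nabla\gamma+\beta}{\d{}^\nabla\gamma+\beta}-\innerr F{[\gamma,\gamma]}+\mu\innerr{\d{}^\nabla\beta}{\d{}^\nabla\beta}.
\]
This expansion is exact up to $\O(\lambda^3)$ and was derived only from Lemma \ref{lemma:affine_primitive} and centrality of $\beta$, so no new input is needed there. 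I would then rewrite each term by moving derivatives to one side with the formal adjoint. The first term splits as $\innerr{(\d{}^\nabla)^*(\d{}^\nabla\gamma+\beta)}{\gamma}+\innerr{\d{}^\nabla\gamma+\beta}{\beta}$. The third gives $\mu\innerr{(\d{}^\nabla)^*\d{}^\nabla\beta}{\beta}$. Regrouping these gives the $\gamma$-part and $\beta$-part of \eqref{eq:hessian}, except for the bracket term.

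The bracket term is where the actual computation lies. I would show pointwise that $\inner F{[\gamma,\gamma]}_{\frak k}\vol_M=\inner{\widehat F(\gamma)}{\gamma}_{\frak k}\vol_M$, possibly up to the normalisation $\star^{-1}$ and sign conventions. To do this I would write $\inner{F}{[\gamma,\gamma]}\vol_M=[\gamma,\gamma]\wedge\star F$, using the pairing $\wedge$ defined before \eqref{eq:delta_nabla}. I would then use $\ad$-invariance of the fibre metric (pointwise, $\inner{[\xi,\eta]}{\zeta}=-\inner{\eta}{[\xi,\zeta]}$, obtained from \eqref{eq:ad_invariance} with $\alpha\in\Gamma(\frak k)$) to move one $\gamma$ across: $[\gamma,\gamma]\wedge\star F=\pm\gamma\wedge[\gamma,\star F]$. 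Finally I would rewrite $\gamma\wedge[\gamma,\star F]=\gamma\wedge\star\star^{-1}[\gamma,\star F]=\inner{\gamma}{\star^{-1}[\gamma,\star F]}\vol_M$. This is the step I expect to be the main obstacle: tracking the graded signs of the bracket on forms (Remark \ref{rem:gla}) and the $\star\star$ sign \eqref{eq:starstar} so that the coefficient in front of $\widehat F$ comes out exactly as $-1$ in \eqref{eq:hessian}. Here the factor $2\cdot\tfrac12$ from the expansion cancels against the fact that $[\gamma,\gamma]$ is quadratic. I would fix the signs by checking the identity on simple tensors $\gamma=\theta\otimes\xi$.

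For the formal tangent space, I would linearise the Euler--Lagrange conditions \eqref{eq:YM.I} and \eqref{eq:YM.II} along the curve $\lambda\mapsto((\C,v),F)+\lambda\llbracket\gamma,\beta\rrbracket$. Equivalently, since the Hessian is the second variation, the tangent vectors to the critical set are those in the kernel of the associated symmetric bilinear form. So I would polarise \eqref{eq:hessian}, pair it against arbitrary $\llbracket\gamma',\beta'\rrbracket$ with $\gamma'\in\Omega^1(M;\frak k)$ and $\beta'\in\Omega^2_\ainv(M;\frak k)$, and require vanishing for all of them. Nondegeneracy of $\innerr\cdot\cdot$ turns vanishing for all $\gamma'$ into \eqref{eq:fts_1}. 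Vanishing for all invariant $\beta'$ only yields the orthogonality relation \eqref{eq:fts_2}, exactly as in the proof of Theorem \ref{thm:main_theorem}.

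Two consistency checks remain. First, the conditions must be well defined on the quotient $Q_\chi$. Adding $j_\chi(\epsilon)=(\epsilon,-\d{}^\nabla\epsilon)$ with $\epsilon$ invariant leaves $\d{}^\nabla\gamma+\beta$ unchanged. Since $\epsilon$ is centre-valued, $\widehat F(\epsilon)=0$, and $\d{}^\nabla\d{}^\nabla\epsilon=[\epsilon,F]=0$. Second, the criticality of $(\sigma,\nabla,F)$ is what guarantees that the first-order term vanishes, so that the Hessian is intrinsic on the affine space $\DD_\chi(A;\frak k)$.
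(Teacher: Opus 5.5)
Your proposal is correct and follows the paper's proof essentially step for step. You read off the Hessian as the $\lambda^2$-coefficient of \eqref{eq:expansion_lambda_euler_lagrange}, convert the bracket term via the $\ad$-invariance identity $[\alpha,\beta]\wedge\gamma=\alpha\wedge[\beta,\gamma]$, and identify the formal tangent space with the kernel of the Hessian (linearising \eqref{eq:YM.I}--\eqref{eq:YM.II} is the paper's alternative route). The sign you anticipated is trivial: the identity holds on simple tensors with no sign, and only $\star\star^{-1}=\id$ enters. Your well-definedness check on $Q_\chi$ is a harmless addition that the paper omits.
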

\begin{proof}
We read out the Hessian from the second-order coefficient in expansion \eqref{eq:expansion_lambda_euler_lagrange}:
\begin{align*}
   H_{(\sigma,\nabla,F)}\llbracket\gamma,\beta\rrbracket&=\frac 12\frac{d^2}{d\lambda^2}\Big|_{\lambda=0}\S((\sigma,\nabla,F)+\lambda\llbracket\gamma,\beta\rrbracket)\\
   &=\innerr{\d{}^\nabla\gamma+\beta}{\d{}^\nabla\gamma+\beta}-\innerr F{[\gamma,\gamma]}+\mu \innerr{\d{}^\nabla\beta}{\d{}^\nabla\beta}.
\end{align*}
To bring it into the desired form, note that for any $\gamma_1,\gamma_2\in\Omega^1(M;\frak k)$, there holds
\[
\innerr{\left[\gamma_1,\gamma_2\right]}{F}=\int_M [\gamma_1,\gamma_2]\wedge\star F=\int_M\gamma_1\wedge[\gamma_2,\star F]=\innerr{\gamma_1}{\star^{-1}[\gamma_2,\star F]}=\innerr{\gamma_1}{\widehat F(\gamma_2)},
\]
where we used in the second equality that $\ad$-invariance of $\inner\cdot\cdot_{\frak k}$ implies
\[
[\alpha,\beta]\wedge\gamma=\alpha\wedge[\beta,\gamma],
\]
for any forms $\alpha,\beta,\gamma\in\Omega^\bullet(M;\frak k)$. As an aside, note that the map $\widehat F$ is clearly self-adjoint. Equality \eqref{eq:hessian} now follows, and identities \eqref{eq:fts_1} and \eqref{eq:fts_2} are then a consequence of the general fact that the tangent space to the subspace of critical points of a function is given by the kernel of its Hessian; in our case,  $\ker H_{(\sigma,\nabla,F)}$. We hereby provide an alternative proof, which assumes $\nabla$ is compatible with $\inner\cdot\cdot_{\frak k}$. We want to differentiate \eqref{eq:YM.I}, so by varying $(\sigma,\nabla,F)\rightarrow (\sigma,\nabla,F)+\lambda\llbracket\gamma,\beta\rrbracket$, we get
\begin{align*}
(\d{}^{\nabla^{\lambda\gamma}})^*(F^{\lambda\gamma}+\lambda\beta)&= \star^{-1}(\d{}^\nabla+(-1)^{\dim M-2}\lambda[\cdot,\gamma]\big)\star(F^{\lambda\gamma}+\lambda\beta)\\
&=(\d{}^{\nabla})^*F+\lambda\big(-{\star^{-1}}[\gamma,\star\, F]+(\d{}^\nabla)^* \d{}^\nabla\gamma+(\d{}^\nabla)^*\beta\big).
\end{align*}
where we used equation \eqref{eq:d_nabla_gamma}. Setting this expression to zero and differentiating at $\lambda=0$, we obtain the equation \eqref{eq:fts_1}. 
To differentiate the second identity \eqref{eq:YM.II}, we similarly compute
\begin{align*}
  (F^{\lambda\gamma}+\lambda\beta)+\mu(\d{}^{\nabla^{\lambda\gamma}})^*(G^{\lambda\gamma}+\lambda\d{}^{\nabla^{\lambda\gamma}}\beta)&=(F^{\lambda\gamma}+\lambda\beta)-\mu\star^{-1}(\d{}^\nabla+
\lambda\cancel{[\cdot,\gamma]})\star(G+\lambda\d{}^\nabla\beta),
\end{align*}
where we used Lemma \ref{lemma:affine_primitive} and the fact that invariant forms are centre-valued. Setting this expression orthogonal to $\Omega^2_\ainv(M;\frak k)$, and differentiating at $\lambda=0$, we obtain  \eqref{eq:fts_2}.
\end{proof}
\begin{remark}
   \label{rem:underdetermined}
   From the second part of the proof, we can also see precisely in what way the equation and relation \eqref{eq:YM.I} and \eqref{eq:YM.II} are underdetermined: if we deform a solution as $(\sigma,\nabla,F)\rightarrow (\sigma,\nabla,F)+\llbracket\gamma,\beta\rrbracket$, it remains a solution if and only if $\gamma$ and $\beta$ satisfy \eqref{eq:fts_1} and \eqref{eq:fts_2}.
\end{remark}

\subsection{Yang--Mills theory on groupoids and relation with the van Est map}
\label{sec:global_ym}
We now provide the global analogue of the framework introduced for Lie algebroids, and establish a relationship between the global and infinitesimal Yang--Mills theory. The global setting will also enable us to describe Yang--Mills theory for bundle gerbes.

To begin with, given a bundle of ideals $\frak k$ of a groupoid $\G$ (Definition \ref{def:boi}), the metric $\inner\cdot\cdot_{\frak k}$ on $\frak k$ is now assumed to be $\Ad$-invariant, that is, 
	\begin{align*}
		\inner{\Ad_g\xi}{\Ad_g\eta}_{\frak k}=\inner{\xi}{\eta}_{\frak k}.
	\end{align*}
for any $g\in\G$ and $\xi,\eta\in \frak k_{s(g)}$. This condition implies $\ad$-invariance, and the converse holds if $\G$ is $s$-connected (Lemma \ref{lem:pairing_A_invariant_implies_G_invariant}). Moving on, let us define the global counterparts of connection triples.
\begin{definition}
   Given a bundle of ideals $\frak k$ on a Lie groupoid $\G\rra M$, a MEC $\omega\in\A(\G;\frak k)$ is said to be \textit{primitive}, if its curvature is multiplicatively exact, i.e., $[\Omega^\omega]=0$. In other words,
   \[
   \Omega^\omega =\delta^0_\G F,
   \]
   for some $F\in\Omega^2(M;\frak k)$, where we denote  by $\delta^0_\G$ the differential of the Bott--Shulman--Stasheff complex at level zero, see equation \eqref{eq:invariant}. Such a 2-form $F$ is called a \textit{curving} of MEC $\omega$, and the subset of all primitive MECs is denoted by $\arc(\G;\frak k)\subset \A(\G;\frak k)$.
\end{definition}
Analogously to the infinitesimal case, the domain of the action is now defined by:
\begin{align}
   \label{eq:action_domain_G}
   \DD(\G;\frak k)=\set*{(\omega,F)\in\A(\G;\frak k)\times \Omega^2(M;\frak k)\given 
   \Omega^\omega=\delta^0_\G F
   },
\end{align}
where we again emphasize that we need to restrict to curvings $F$ with compact support, if we do not assume the base manifold $M$ is compact. We now arrive to the groupoid version of Theorem \ref{thm:main_theorem}.

\begin{theorem}
   \label{thm:main_theorem_groupoids}
      Let $\frak k$ be a bundle of ideals of a Lie groupoid $\G$ over a compact, oriented, pseudo-Riemannian manifold $M$, and let $\inner\cdot\cdot_{\frak k}$ be an $\Ad$-invariant bundle metric on $\frak k$. For any pair $(\omega,F)$ consisting of a MEC and its curving, the following statements are equivalent.
      \begin{enumerate}[label={(\roman*)}]
         \item The pair $(\omega,F)$ is a solution to the constrained variational problem for the action functional 
   \begin{align}
      \label{eq:ym_action_G}
       \S_\G\colon\DD(\G;\frak k)\ra \R,\quad \S_\G(\omega,F)=\int_M \inner FF_{\frak k}\vol_M+\mu\int_M \inner GG_{\frak k}\vol_M,\  (\mu\in\R),
   \end{align}
      subject to the constraint that the class $[\omega]$ is constant in the cohomology of the Bott--Shulman--Stasheff complex \eqref{eq:bss} of the Lie groupoid $\G$.
      \item The curvature pair $(F,G=\d{}^\nabla F)$ satisfies the following Euler--Lagrange equation and relation,
      \begin{align}
            (\d{}^\nabla)^* F&=0,\label{eq:YM.Ig}\\
            F+\mu\,(\d{}^\nabla)^* G &\perp \Omega^2_{\ginv}(M;\frak k),\label{eq:YM.IIg}
      \end{align}
      where $\nabla$ is the linear connection on $\frak k$ induced by $\omega$, see e.g., \cite{covariant_derivatives}*{\textit{Proposition 3.9}}, and we denoted by $\Omega_{\ginv}^\bullet(M;\frak k)=\ker(\delta^0_\G)$ the space of $\G$-invariant forms on the base.
      \end{enumerate}
\end{theorem}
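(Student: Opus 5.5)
The plan is to mirror the proof of Theorem \ref{thm:main_theorem} directly on the groupoid, using the Bott--Shulman--Stasheff complex \eqref{eq:bss} and the double complex \eqref{eq:bss_ideals} in place of the Weil complex. Two global ingredients are needed: a description of the constraint set as an affine space, and the fact that $G$ does not change under multiplicatively exact deformations.

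\textbf{Step 1: the global analogue of Lemma \ref{lemma:affine_primitive}.} Let $(\omega,F)\in\DD(\G;\frak k)$ and $\gamma\in\Omega^1(M;\frak k)$. The claim is that $(\omega+\delta^0_\G\gamma,\,F^\gamma)$ again lies in $\DD(\G;\frak k)$, with $F^\gamma=F+\d{}^\nabla\gamma-\frac12[\gamma,\gamma]_{\frak k}$ as before.

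\emph{Horizontality and the induced connection.} First, $\delta^0_\G\gamma$ vanishes on $K$, since $\delta^0_\G\gamma$ restricted to $\frak k$ at units is $\iota_\rho$-type, and $\frak k\subset\ker\rho$. Hence $\omega+\delta^0_\G\gamma$ is again a MEC. Its induced connection should be $\nabla^\gamma=\nabla+[\cdot,\gamma]$, and I would check this from the formula in \cite{covariant_derivatives}*{Proposition 3.9}.

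\emph{The curvature.} I would expand via the structure equation $\Omega^\omega=\d{}^{s^*\nabla}\omega+\frac12[\omega,\omega]$:
\[\Omega^{\omega+\delta^0_\G\gamma}=\Omega^\omega+\D{}^\omega\delta^0_\G\gamma+\text{(quadratic term)}.\]
The linear term equals $\delta^0_\G\d{}^\nabla\gamma$ by the left square of \eqref{eq:bss_ideals}. For the quadratic term I would verify directly that it equals $-\frac12\delta^0_\G[\gamma,\gamma]$, using $\delta^0_\G\gamma=s^*\gamma-\Ad_{g^{-1}}t^*\gamma$ and $\Ad$-equivariance of the bracket.

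\emph{Invariance of $G$.} The computation \eqref{eq:3_curvature_invariant} of $\d{}^{\nabla^\gamma}F^\gamma=G$ takes place purely on $M$, so it transfers verbatim.

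This quadratic-term identity is the step I expect to be the main obstacle. If the direct computation gets messy, I would instead reduce to the infinitesimal statement by applying the van Est map to both sides. However, that is only injective under connectedness hypotheses, so the direct computation is preferable.

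\textbf{Step 2: the global analogue of Lemma \ref{lemma:D_chi_affine}.}

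\emph{The second type of deformation.} Deforming $F$ by $\beta\in\Omega^2_{\ginv}(M;\frak k)$ keeps the pair in $\DD(\G;\frak k)$. As in Remark \ref{rem:invariant_transversal_central}, $\G$-invariant forms are centre-valued, because $\Ad_g$ acts on them trivially at isotropy elements near units, so differentiating gives that $\ad$ acts trivially. Consequently $\d{}^\nabla$ restricted to such forms is independent of the representative, and $\d{}^\nabla$ preserves $\G$-invariance by \eqref{eq:bss_ideals}.

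\emph{The equivalence.} Next I would show that $[\tilde\omega]=[\omega]$ in the BSS cohomology, i.e.\ $\tilde\omega=\omega+\delta^0_\G\gamma$, holds if and only if $(\tilde\omega,\tilde F)=(\omega+\delta^0_\G\gamma,F^\gamma+\beta)$ with $\beta$ $\G$-invariant. The backward direction is immediate. For the forward direction, both $\tilde F$ and $F^\gamma$ are curvings of $\tilde\omega$, so their difference lies in $\ker\delta^0_\G$.

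\emph{Conclusion.} The constrained set is therefore an affine space modelled on a quotient $Q_\chi$ of $\Omega^1(M;\frak k)\oplus\Omega^2_{\ginv}(M;\frak k)$, with the affine action given by \eqref{eq:affine_DD_chi}.

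\textbf{Step 3: the variation.} Since the action depends only on $(F,G)$ and these transform exactly as in the algebroid case, the expansion \eqref{eq:expansion_lambda_euler_lagrange} holds word for word. Its linear term is
\[2\innerr{(\d{}^\nabla)^*F}{\gamma}+2\innerr{F+\mu(\d{}^\nabla)^*G}{\beta}.\]
Requiring this to vanish for all $\gamma\in\Omega^1(M;\frak k)$ gives \eqref{eq:YM.Ig}. Requiring it to vanish for all $\beta\in\Omega^2_{\ginv}(M;\frak k)$ gives \eqref{eq:YM.IIg}. Nondegeneracy of $\innerr\cdot\cdot$ makes both directions of the equivalence hold.

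Notice that the $\Ad$-invariance of the metric is not actually used in Steps 1--3 beyond ensuring the setting matches Theorem \ref{thm:main_theorem}.
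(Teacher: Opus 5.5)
Your proposal is correct and follows the paper's route: the paper also shows that the constrained set $\DD_\chi(\G;\frak k)$ is affine over $\big(\Omega^1(M;\frak k)\oplus\Omega^2_{\ginv}(M;\frak k)\big)/\Omega^1_{\ginv}(M;\frak k)$, arguing as in \sec\ref{sec:affine_deformations} with $\delta^0_\G$ in place of $\delta^0_A$, and then repeats the computation of \sec\ref{sec:euler-lagrange}. You spell out the global analogues of Lemmas \ref{lemma:affine_primitive} and \ref{lemma:D_chi_affine}, which the paper leaves implicit. One point to add is that the identity $R^\nabla=[\cdot,F]_{\frak k}$ used in \eqref{eq:3_curvature_invariant} holds here because $(\ve(\omega),F)$ is an infinitesimal connection triple, and this requires no connectedness of $\G$.
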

\begin{proof}
   The proof of this theorem is almost identical to the infinitesimal case, the only difference being in the use of the map $\delta^0_\G$ in place of $\delta^0_A\colon\Omega^k(M;\frak k)\ra \Omega^k_{im}(A;\frak k)$. Let us be more precise. For a fixed cohomological class $\chi\in\arc(\G;\frak k)/\im\delta^0_\G$, similarly as in \sec\ref{sec:affine_deformations}, the space of constrained pairs
\[
\DD_\chi(\G;\frak k)=\set*{(\omega,F)\in \A(\G;\frak k)\times \Omega^2(M;\frak k)\given \delta^0 F=\Omega^{\omega}, [\omega]=\chi}
\]
is an affine space, modelled on the following vector space (compare with Lemma \ref{lemma:D_chi_affine}):
\[
\big(\Omega^1(M;\frak k)\oplus \Omega^2_{\ginv}(M;\frak k)\big)/\Omega^1_{\ginv}(M;\frak k).
\]
Now, the rest of the proof of the theorem above consists of the same computation as in \sec\ref{sec:euler-lagrange}.
\end{proof}
 In what follows, we will see that the global and infinitesimal framework are equivalent, up to connectedness conditions on $\G$.

\subsubsection{Relation between the global and infinitesimal Yang--Mills theory}

Let $A\Ra M$ now be the Lie algebroid of a Lie groupoid $\G\rra M$. The Bott--Shulman--Stasheff complex \eqref{eq:bss} and the Weil complex \eqref{eq:weil_complex} are related by the so-called \textit{van Est map}; this is a cochain map, which we will denote
\[
\ve \colon \Omega^k(\G^{(\bullet)};\frak k)\ra W^{\bullet,k}(A;\frak k).
\]
See, for instance, \cites{diff_cohomology, weil, homogeneous,simplicial_van_est} for the definition and the proof that it commutes with the differentials. For our purpose, we only need its description on 1-cocycles: given any multiplicative form $\omega\in\Omega_m^k(\G;\frak k)$, the corresponding IM form $(L,l)=\ve(\omega)\in \Omega^k_{im}(A;\frak k)$ reads
\begin{align}
   \label{eq:ve_formula}
\begin{split}
    L(\alpha)_x(X_i)_{i=1}^k=\deriv\lambda 0\Ad_{\phi^{\alpha^L}_\lambda(1_x)}\, \omega\big({\d(\phi^{\cev\alpha}_\lambda)}_{1_{x}} (X_i)\big)_i,\quad l(\alpha)=u^*(\iota_{\alpha^L}\omega).
\end{split}
\end{align}
Since $\ve$ is a cochain map, it indeed maps multiplicative forms on $\G$ to IM forms on $A$, and this restriction will again be denoted $\ve\colon \Omega_m^k(\G;\frak k)\ra \Omega^k_{im}(A;\frak k)$. It satisfies the following important properties that we will need to relate the global and infinitesimal Yang--Mills theory.
\begin{itemize}
   \item If $\G$ is $s$-connected, the $\G$-invariant and $A$-invariant forms on the base manifold $M$ coincide, $\Omega^\bullet_\ginv(M;\frak k)=\Omega^\bullet_\ainv(M;\frak k)$, and the restriction $\ve\colon \Omega_m^\bullet(\G;\frak k)\ra \Omega^\bullet_{im}(A;\frak k)$ is injective.
   \item If $\G$ is $s$-simply connected, the restriction $\ve\colon \Omega_m^\bullet(\G;\frak k)\ra \Omega^\bullet_{im}(A;\frak k)$ is an isomorphism.
\end{itemize}
Focusing on connections, we see that the map $\ve$ restricts to a map between MECs on $\G$ and IM connections on $A$:  the defining condition $\omega|_{\frak k}=\id_{\frak k}$ on a MEC $\omega$ translates via $\ve$ to the condition that the symbol of the IM form $\ve(\omega)$ restricts on $\frak k$ to $\id_{\frak k}$. The following proposition shows that $\ve$ induces a map between the dynamical fields in the global and the infinitesimal setting, and up to connectedness conditions on $\G$, the variational problems of the two settings are the same.
\begin{proposition}
   \label{prop:global_vs_inf}
   Let $\frak k$ be a bundle of ideals of a Lie groupoid $\G$, with Lie algebroid $A$, over a compact, oriented, pseudo-Riemannian manifold $M$, and let $\inner\cdot\cdot_{\frak k}$ be an $\Ad$-invariant bundle metric on $\frak k$. The map 
   $
   \overline{\ve}\colon \DD(\G;\frak k)\ra \DD(A;\frak k)$, $\overline{\ve}(\omega,F)=(\ve(\omega),F)
   $
   has the following properties.
   \begin{enumerate}
      \item If $(\ve(\omega),F)$ is a solution to the variational problem \eqref{eq:ym_action} in the infinitesimal setting, then  $(\omega,F)$ is a solution to the variational problem \eqref{eq:ym_action_G} in the global setting. In other words,
      \[
      \overline{\ve}{}^{-1}(\set{\text{solution space on $A$}})\subset \set{\text{solution space on $\G$}}.
      \]
      \item If $\G$ is $s$-connected, the implication in (i) is an equivalence, and the inclusion is an equality.
      \item If $\G$ is $s$-simply connected, $\overline{\ve}$ is a bijection which restricts to a bijection between the solution spaces of the respective variational problems on $\G$ and $A$.
   \end{enumerate}
\end{proposition}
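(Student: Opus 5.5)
First I will check that $\overline{\ve}$ is well defined, i.e.\ that it maps $\DD(\G;\frak k)$ into $\DD(A;\frak k)$. Since $\ve$ is a cochain map from the Bott--Shulman--Stasheff complex to the Weil complex, it intertwines $\delta^0_\G$ with $\delta^0_A$ and sends MECs to IM connections. It remains to see that it sends curvatures to curvatures, $\ve(\Omega^\omega)=\Omega^{\ve(\omega)}$. My plan is to use the structure equation $\Omega^\omega=\d{}^\nabla\omega+\tfrac12[\omega,\omega]_{\frak k}$ together with compatibility of $\ve$ with $\d{}^\nabla$ and with the horizontal projections $h^*$ (the van Est map is natural for $\D{}^\omega=h^*\circ\d{}^\nabla$, cf.\ \cite{covariant_derivatives}). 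I also need that $\omega$ and $\ve(\omega)$ induce the same connection $\nabla$ on $\frak k$, which is immediate from \eqref{eq:ve_formula} restricted to $\frak k$. Granting these facts, $\Omega^\omega=\delta^0_\G F$ implies
\[
\Omega^{\ve(\omega)}=\ve(\delta^0_\G F)=\delta^0_A F,
\]
so $(\ve(\omega),F)\in\DD(A;\frak k)$, with the same curvature pair $(F,G)$, $G=\d{}^\nabla F$. This step, matching the curvatures under $\ve$, is the one I expect to be the main obstacle. If naturality of $\D{}$ under $\ve$ is not directly citable, I would verify it on 1-cocycles by differentiating the formula \eqref{eq:ve_formula}.

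For (i), I will compare the two sets of Euler--Lagrange conditions from Theorem \ref{thm:main_theorem} and Theorem \ref{thm:main_theorem_groupoids}. Both involve the same $F$, $G$ and $\nabla$, so \eqref{eq:YM.I} and \eqref{eq:YM.Ig} coincide. The relations differ only in the orthogonality spaces $\Omega^2_\ainv$ versus $\Omega^2_\ginv$. Since $\ve\circ\delta^0_\G=\delta^0_A$ and $\ve$ is injective on level zero (it is the identity on $\Omega^k(M;\frak k)$), we get $\Omega^2_\ginv\subset\Omega^2_\ainv$. Indeed, if $\delta^0_\G\beta=0$ then $\delta^0_A\beta=\ve(0)=0$. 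Therefore orthogonality to $\Omega^2_\ainv$ implies orthogonality to $\Omega^2_\ginv$, and (i) follows.

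For (ii), if $\G$ is $s$-connected then $\Omega^\bullet_\ginv=\Omega^\bullet_\ainv$ by the first bullet point above. The two relations then coincide, the implication becomes an equivalence, and the inclusion becomes an equality.

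For (iii), if $\G$ is $s$-simply connected, $\ve$ is an isomorphism $\Omega^1_m(\G;\frak k)\to\Omega^1_{im}(A;\frak k)$. It preserves the normalization $\omega|_{\frak k}=\id_{\frak k}$ in both directions, since the symbol is $l(\alpha)=u^*(\iota_{\alpha^L}\omega)$, so it is a bijection $\A(\G;\frak k)\to\A(A;\frak k)$. To see that $\overline{\ve}$ is surjective, take $((\C,v),F)\in\DD(A;\frak k)$ and write $(\C,v)=\ve(\omega)$. Then
\[
\ve(\Omega^\omega)=\Omega^{(\C,v)}=\delta^0_AF=\ve(\delta^0_\G F),
\]
and injectivity of $\ve$ on multiplicative 2-forms gives $\Omega^\omega=\delta^0_\G F$. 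Injectivity of $\overline{\ve}$ is clear. Since $s$-simply connected implies $s$-connected, (ii) shows that the bijection restricts to a bijection between the two solution spaces.
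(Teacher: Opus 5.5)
Your proposal is correct and follows essentially the same route as the paper. Well-definedness comes from $\ve(\Omega^\omega)=\Omega^{\ve(\omega)}$, which the paper simply cites as Theorem 4.14 of \cite{covariant_derivatives}, together with $\ve\circ\delta^0_\G=\delta^0_A$; (i) and (ii) come from comparing the Euler--Lagrange conditions via $\Omega^\bullet_\ginv(M;\frak k)\subset\Omega^\bullet_\ainv(M;\frak k)$, with equality when $\G$ is $s$-connected; and (iii) comes from the van Est isomorphism, where your surjectivity argument (recovering $\Omega^\omega=\delta^0_\G F$ from injectivity of $\ve$ on multiplicative 2-forms) spells out a step the paper leaves implicit.
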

   \noindent Diagrammatically, the situation of the last proposition is depicted as follows.
\[\begin{tikzcd}
	{(\ve(\omega),F)\text{ is a solution for }\eqref{eq:ym_action}} & {(\omega,F)\text{ is a solution for }\eqref{eq:ym_action_G}}
	\arrow[Rightarrow, from=1-1, to=1-2]
	\arrow["{\text{if }\G\text{ is $s$-connected}}", bend left=12, Rightarrow, from=1-2, to=1-1]
\end{tikzcd}\]
\begin{proof}
First note that the map $\overline \ve$ is well-defined since 
\[
\Omega^{\ve(\omega)}=\ve(\Omega^\omega)=\ve(\delta^0_\G F)=\delta^0_AF,
\]
where we used \cite{covariant_derivatives}*{Theorem 4.14} in the first equality, and the equality 
\begin{align}
   \label{eq:ve_delta}
   \ve\circ\delta_\G^0=\delta^0_A 
\end{align}
in the third equality, which comes from the fact that $\ve$ is a cochain map. The point (i) follows from Theorems \ref{thm:main_theorem} and \ref{thm:main_theorem_groupoids}, together with the fact that any $\G$-invariant form is also $A$-invariant, that is, $\Omega^\bullet_\ginv(M;\frak k)\subset \Omega^\bullet_\ainv(M;\frak k)$, which also comes from \eqref{eq:ve_delta}. The points (ii) and (iii) follow from the properties of the van Est map $\ve$ listed before the proposition.
\end{proof}

We conclude this section with a brief note on gauge invariance. Namely, a global version of Theorem \ref{thm:gauge_invariance_restated} is also possible, and the reformulation is straightforward: in the setting of Lie groupoids, a (global) \textit{gauge transformation} is defined as a Lie groupoid automorphism $\Theta\colon\G\ra \G$ covering an orientation-preserving isometry on the base $M$, whose induced Lie algebroid map $\Theta_*=\d\Theta|_A\colon A\ra A$ restricts on $\frak k$ to an isometry for the metric $\inner\cdot\cdot_{\frak k}$. As before, an important class of examples of gauge transformations comes from  inner automorphisms induced by (static) global bisections, as defined in 
\eqref{eq:inner_automorphism}. Moreover, an analogous naturality argument as in the infinitesimal case shows the following.
\begin{theorem}
   Assume the setting of Theorem \ref{thm:main_theorem_groupoids}. The action functional \eqref{eq:ym_action_G} is invariant under the pullbacks by global gauge transformations. In particular, they preserve the solution space to the Euler--Lagrange equation \eqref{eq:YM.Ig} and relation \eqref{eq:YM.IIg}.
\end{theorem}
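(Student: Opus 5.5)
The plan is to mirror the proof of Theorem \ref{thm:gauge_invariance_restated}, replacing the Weil complex by the Bott--Shulman--Stasheff complex \eqref{eq:bss}. Let $\Theta\colon\G\ra\G$ be a global gauge transformation covering $\vartheta$. First I will define the pullback of $\frak k$-valued forms on the nerve $\G^{(p)}$ by $\Theta^*(\eta\otimes\xi)=(\Theta^{(p)})^*\eta\otimes(\Theta_*^{-1})\xi$. Here $\Theta^{(p)}$ is the induced map on $\G^{(p)}$, and $\Theta_*=\d\Theta|_A$ preserves $\frak k$: being a groupoid automorphism, it preserves the smearing $K$, and we may assume $\Theta_*(\frak k)=\frak k$ as in the definition. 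Since $\Theta$ commutes with $s,t,m$ and all face maps, and $\Theta_*\circ\Ad_g=\Ad_{\Theta(g)}\circ\Theta_*$ (differentiate $\Theta\circ C_g=C_{\Theta(g)}\circ\Theta$), this pullback commutes with the simplicial differential $\delta$. In particular it preserves multiplicative forms and satisfies $\Theta^*\delta^0_\G=\delta^0_\G\Theta^*$. Moreover $\Theta^*\omega$ restricts to $\id_{\frak k}$ on $\frak k$, so $\Theta^*\omega\in\A(\G;\frak k)$, with kernel $E_\Theta=\d\Theta^{-1}(E)$.

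Second, I will show naturality of curvature, $\Omega^{\Theta^*\omega}=\Theta^*\Omega^\omega$. For this I check the two identities $\d{}^{\nabla^\Theta}\Theta^*=\Theta^*\d{}^{\nabla}$ and $h_\Theta^*\Theta^*=\Theta^*h^*$. The first requires that the connection induced by $\Theta^*\omega$ is the pullback connection $\nabla^\Theta$. I plan to verify this from the formula in \cite{covariant_derivatives}*{Proposition 3.9}, or more directly by noting that $\nabla$ is read off from $\omega$ along the unit section and $\Theta$ preserves units. The second identity holds because $\d\Theta$ maps $E_\Theta$ onto $E$ and $K$ onto $K$, so it intertwines the horizontal projections. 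A shortcut is the structure equation $\Omega^\omega=\d{}^\nabla\omega+\tfrac12[\omega,\omega]_{\frak k}$, which is manifestly natural once the first identity is known.

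Third, combining these, $\Theta^*\Omega^\omega=\Theta^*\delta^0_\G F=\delta^0_\G\Theta^*F$. Hence $(\Theta^*\omega,\Theta^*F)\in\DD(\G;\frak k)$, and its 3-curvature is $\d{}^{\nabla^\Theta}\Theta^*F=\Theta^*G$. Because $\Theta^*$ is a cochain automorphism of \eqref{eq:bss}, it also maps each constraint class $\chi$ to a class $\Theta^*\chi$, so it sends the constrained affine spaces $\DD_\chi(\G;\frak k)$ affinely onto $\DD_{\Theta^*\chi}(\G;\frak k)$. Finally, $\inner{\Theta^*F}{\Theta^*F}_{\frak k}=\vartheta^*\inner FF_{\frak k}$ since $\Theta_*|_{\frak k}$ is an isometry, and similarly for $G$. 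Since $\vartheta$ is an orientation-preserving isometry, $\vartheta^*\vol_M=\vol_M$, and the change of variables formula gives $\S_\G(\Theta^*\omega,\Theta^*F)=\S_\G(\omega,F)$. Invariance of the functional together with this affine compatibility of constraints shows that critical points of the constrained problem are mapped to critical points. By Theorem \ref{thm:main_theorem_groupoids}, the solution set of \eqref{eq:YM.Ig}--\eqref{eq:YM.IIg} is therefore preserved.

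I expect the main obstacle to be the identification of the induced connection of $\Theta^*\omega$ with the pullback connection $\nabla^\Theta$. Everything else is formal naturality, but this step depends on the explicit construction of $\nabla$ from $\omega$. I would first attempt it via the van Est map: $\ve$ is natural under groupoid automorphisms, and $\nabla=\C|_{\Gamma(\frak k)}$ for $(\C,v)=\ve(\omega)$. This reduces the claim to the infinitesimal statement already used in the proof of Theorem \ref{thm:gauge_invariance_restated}.
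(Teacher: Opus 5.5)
Your proposal is correct and follows the paper's route. The paper proves this theorem by the same naturality argument as for Theorem~\ref{thm:gauge_invariance_restated}: $\Theta^*$ commutes with $\delta$ and with $\D{}^\omega$ via $\d{}^{\nabla^\Theta}\Theta^*=\Theta^*\d{}^\nabla$ and $h^*_\Theta\Theta^*=\Theta^*h^*$, so $\Theta^*F$ is a curving of $\Theta^*\omega$ with 3-curvature $\Theta^*G$, and the isometry assumptions preserve the integrand. You spell this out in more detail, including the compatibility of $\Theta^*$ with the constraint classes, and your only slip is that $\d{\Theta}(K)=K$ follows from the assumption $\Theta_*(\frak k)=\frak k$ in the definition of a gauge transformation, not from $\Theta$ merely being a groupoid automorphism.
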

 
Finally, recall that Lie's second theorem holds for Lie groupoids \cite{lie2}. Therefore, if $\G$ is $s$-simply connected (in other words, if $\G$ is isomorphic to the Weinstein groupoid of $A$), then the infinitesimal gauge transformations are in bijection with the global gauge transformations. Together with Proposition \ref{prop:global_vs_inf}, this means that in the case when $\G$ is $s$-simply connected, there is no difference between the global and infinitesimal Yang--Mills theory.

\needspace{2cm}
\section{Examples}
A first example of a solution to the Yang--Mills equation and relation \eqref{eq:YM.I} and \eqref{eq:YM.II} has already been given in Example \ref{example:self_dual}, pertaining to the simple case of a totally intransitive algebroid. We now produce more examples, of which bundle gerbes (\sec\ref{sec:ym_gerbes}) are the most important.
\subsection{Almeida--Molino algebroid (transitive, non-integrable case)}
\label{ex:presymplectic}
Let us look at the well-known transitive Lie algebroid of Almeida and Molino \cites{molino, integrability, lectures_on_integrability, integration_transitive}. Historically, this was the first known example of a non-integrable Lie algebroid. To construct it, fix a closed 2-form $\omega\in\Omega^2(M)$ on a manifold $M$, and consider the Whitney sum \[A_\omega=TM\oplus \R_M,\] 
where $\R_M$ denotes the trivial line bundle over $M$. The anchor of $A_\omega$ is given by $\rho=\pr_{TM}$ and the bracket is defined on the sections as
\begin{align*}
[(X,f),(Y,g)]_\omega=([X,Y],Xg-Yf+\omega(X,Y)),
\end{align*}
for any $X,Y\in\vf(M)$ and $f,g\in \Gamma(\R_M)=C^\infty(M)$. The associated short exact sequence reads
\begin{align}
   \label{eq:ses_almeida_molino}
   \begin{tikzcd}[ampersand replacement=\&]
   	0 \& {\R_M} \& A_\omega \& TM \& 0.
   	\arrow[from=1-1, to=1-2]
   	\arrow[from=1-2, to=1-3]
   	\arrow["\rho", from=1-3, to=1-4]
   	\arrow[from=1-4, to=1-5]
   \end{tikzcd}
\end{align}
Any splitting $\sigma$ of this sequence must be of the form
\begin{align*}
\sigma(X)=(X,\theta(X)),
\end{align*}
for some $\theta\in\Omega^1(M)$, so we can identify the space of splittings with $\Omega^1(M)$. Due to transitivity, connection triples are determined by the splitting, so $\DD(A_\omega)\cong\Omega^1(M)$. More precisely, the induced connection $\nabla$ on $\R_M$ is the canonical flat connection,
\begin{align*}
\nabla_X f=[(X,\theta(X)),(0,f)]_\omega=Xf,
\end{align*}
and the curvature of a splitting $\theta\in\Omega^1(M)$ is identified with a form $F^\theta\in\Omega^2(M)$ by computing:
\begin{align*}
F^\sigma(X,Y)&=([X,Y],\theta[X,Y])-([X,Y],X(\theta(Y))-Y(\theta(X))+\omega(X,Y))\\
&=(0,-\d\theta(X,Y)+\omega(X,Y)).
\end{align*}
Hence, $F^\theta=\omega-\d\theta$, so we observe that the curvature of a splitting $\theta$ vanishes if and only if $\omega$ is exact with the form $\theta$ as its primitive.

In order to talk about Yang--Mills theory on $A_\omega$, we also need to fix a metric on $M$ and a fibrewise metric on $\R_M$. It is easy to see that the usual Euclidean inner product on $\R_M$ (i.e., the product of functions) is ad-invariant, which in this case reads out as the product rule $X(fg)=(Xf)g+f(Xg)$. Thus, the action reads
\begin{align}
\label{eq:functional_almeida_molino}
\S\colon\Omega^1(M)\rightarrow \R,\quad \S(\theta)=\int_M\inner{\omega-\d\theta}{\omega-\d\theta}\vol_M.
\end{align}
By Theorem \ref{thm:main_theorem}, a splitting $\theta\in\Omega^1(M)$ is critical for this action if and only if
\begin{align}
\label{eq:ym_example_abelian}
\d{}\star(\omega-\d\theta)=0,
\end{align}
where $\star$ is the Hodge-$\star$ operator associated to the given metric on $M$. Assuming $M$ is compact and Riemannian, we now observe that by Hodge theorem \cite{foundations_manifolds}*{Theorem 6.1}, critical splittings exist. Indeed, since $\omega$ is closed, it defines a cohomology class $[\omega]\in H_{dR}^2(M)$, which in turn admits a unique harmonic representative $\eta\in\Omega^2(M)$, so  $\eta=\omega-\d\theta$ for some $\theta\in\Omega^1(M)$. Furthermore, uniqueness of $\eta$ implies that $\theta$ is only unique up to a closed 1-form; this is just a rephrasing of the fact that the PDE \eqref{eq:ym_example_abelian} is underdetermined, as observed in Remark \ref{rem:underdetermined}. More precisely, for a critical splitting $\theta$, the splitting $\theta+\tau$ is again critical if and only if $\d\tau=0$, so all critical splittings have the same curvature, $\eta$. At last, by rescaling the action functional \eqref{eq:functional_almeida_molino}, we can write it as a difference of a kinetic and a potential term (i.e., a Lagrangian), and conclude the following.
\begin{corollary}
   The harmonic representative of a closed 2-form $\omega\in\Omega^2(M)$ on a compact oriented Riemannian manifold $M$ is given by the curvature $F^\theta$ of a critical splitting $\theta\in\DD(A_\omega)\cong\Omega^1(M)$ of the  Almeida--Molino algebroid $A_\omega$, for the action functional
   \[
\S(\theta)=\int_M\big(\tfrac 12\inner{\d\theta}{\d\theta}-\inner{\omega}{\d\theta}\big)\vol_M.
\]
\end{corollary}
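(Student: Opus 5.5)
The plan is to expand the action $\S(\theta)=\int_M\inner{\omega-\d\theta}{\omega-\d\theta}\vol_M$ from \eqref{eq:functional_almeida_molino} and compare it with the Lagrangian in the statement, then invoke the Hodge-theoretic discussion preceding the corollary. Expanding bilinearly gives
\[
\S(\theta)=\int_M\inner\omega\omega\vol_M-2\int_M\inner\omega{\d\theta}\vol_M+\int_M\inner{\d\theta}{\d\theta}\vol_M=\innerr\omega\omega+2\int_M\big(\tfrac12\inner{\d\theta}{\d\theta}-\inner\omega{\d\theta}\big)\vol_M .
\]
The first term is independent of $\theta$, so the functional in the statement is $\tfrac12(\S-\innerr\omega\omega)$. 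An affine rescaling by a positive constant does not change critical points, and here the domain $\DD(A_\omega)\cong\Omega^1(M)$ is a vector space with no constraint, since $A_\omega$ is transitive. Hence the critical splittings for the two functionals coincide.

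Next, by Theorem \ref{thm:main_theorem}, specialised to this transitive abelian case, a splitting $\theta$ is critical if and only if \eqref{eq:ym_example_abelian} holds, namely $\d\star(\omega-\d\theta)=0$. Equivalently, $\d^*F^\theta=0$, using \eqref{eq:delta_nabla} with $\nabla$ the trivial flat connection, which is compatible with the Euclidean metric. In this case there are no invariant $2$- or $3$-forms to worry about: $G=\d F^\theta=\d\omega=0$, and \eqref{eq:YM.II} is vacuous because $\Omega^2_{\ainv}=0$, as the leaves have codimension $0$ (Section \ref{sec:vanishing_invariant_forms}). As a sanity check, one can also vary directly: the derivative at $\lambda=0$ of $\S(\theta+\lambda\tau)$ is $-2\innerr{\omega-\d\theta}{\d\tau}=-2\innerr{\d^*(\omega-\d\theta)}{\tau}$.

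Finally, $F^\theta=\omega-\d\theta$ is closed and lies in the class $[\omega]\in H^2_{dR}(M)$. By the critical condition it is also coclosed. On a compact oriented Riemannian manifold, closed and coclosed means harmonic, so by uniqueness in the Hodge theorem $F^\theta$ equals the harmonic representative $\eta$ of $[\omega]$. Existence of a critical $\theta$ follows as already noted: writing $\eta=\omega-\d\theta$ via the Hodge decomposition gives one. I expect no genuine obstacle here. The only point needing care is the passage from criticality to harmonicity, which is the equivalence $\Delta F=0\iff \d F=0=\d^*F$. This uses compactness and the positive definiteness of the Riemannian metric, through the identity $\innerr{\Delta F}{F}=\|\d F\|^2+\|\d^*F\|^2$.
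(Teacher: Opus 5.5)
Your proposal is correct and follows essentially the same route as the paper: the critical condition $\d{}\star(\omega-\d\theta)=0$ from Theorem \ref{thm:main_theorem}, existence and uniqueness of the harmonic representative of $[\omega]$ via the Hodge theorem, and the observation that the stated Lagrangian equals $\tfrac12\big(\S(\theta)-\innerr\omega\omega\big)$, so it has the same critical points. One small correction of emphasis: closed and coclosed implies harmonic trivially from $\Delta=\d{}\d{}^*+\d{}^*\d{}$, so compactness and positivity of the metric enter only through the Hodge theorem (in particular the uniqueness of the harmonic representative), not through that implication.
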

Observe that one can also exchange $\R$ for an arbitrary abelian Lie algebra, and the essential features of the resulting example will remain the same as above.

\subsection{Bundle gerbes (non-transitive, integrable case)}
\label{sec:ym_gerbes}
In this section, we show the global framework developed in \sec\ref{sec:global_ym}  yields a Yang--Mills theory for bundle gerbes over manifolds. For simplicity, we will focus only on $S^1$-bundle gerbes over manifolds, which are defined as central $S^1$-extensions of submersion groupoids. Let us recall these notions.

\begin{definition}
   An \emph{$S^1$-extension} of a submersion groupoid is a Lie groupoid $\G\rra M$, together with a surjective submersive Lie groupoid morphism $\Phi\colon \G\ra M\times_\pi M$ onto the submersion groupoid,
   \[
   M\times_\pi M=\set{(y,x)\in M\times M\given \pi(y)=\pi(x)},
   \]
   of a surjective submersion $\pi\colon M\ra N$, such that the kernel of $\Phi$ is the trivial abelian Lie group bundle $S^1_M=M\times S^1$. The situation is portrayed with the short exact sequence of Lie groupoids,
   \begin{align}
      \label{eq:gerbe_sequence}
      \begin{tikzcd}[ampersand replacement=\&]
      	1 \& S^1_M \& \G \& {M\times_\pi M} \& 1.
      	\arrow[from=1-1, to=1-2]
      	\arrow[from=1-2, to=1-3]
      	\arrow["\Phi",from=1-3, to=1-4]
      	\arrow[from=1-4, to=1-5]
      \end{tikzcd}
   \end{align}
   An $S^1$-extension is said to be \emph{central}, if for any $g\in \G$ and $z\in S^1$, there holds 
   \[C_g(s(g),z)=(t(g),z),\]
   where $C_g$ is the conjugation by $g$. A central $S^1$-extension of a submersion groupoid as above (or sometimes, its Morita equivalence class) is also known as an \textit{$S^1$-bundle gerbe} over the manifold $N$.
\end{definition} 
At the level of Lie algebroids, the short exact sequence \eqref{eq:gerbe_sequence} reads
\[\begin{tikzcd}
	0 & \R_M & A & {T\F} & 0,
	\arrow[from=1-1, to=1-2]
	\arrow[from=1-2, to=1-3]
	\arrow["{\Phi_*}", from=1-3, to=1-4]
	\arrow[from=1-4, to=1-5]
\end{tikzcd}\]
and centrality implies that the adjoint representations $\Ad\colon \G\curvearrowright \R_M$ and $\ad\colon A\curvearrowright \R_M$ are  trivial. We note that since $\Phi$ is a groupoid morphism, we must have $\Phi=(t,s)$ and so $\Phi_*=\rho$. Moreover, the kernel $\ker\Phi=S^1_M$ is the isotropy group bundle of $\G$, the trivial line bundle $\R_M$ is the isotropy Lie algebra bundle (and thus a bundle of ideals), and $N$ is the leaf space of $\G$. 

Let us now consider a multiplicative Ehresmann connection $\omega\in\A(\G)$ for the groupoid morphism $\Phi=(t,s)$. By \cite{mec}*{Proposition 4.9}, such a connection exists (since $\G$ must be proper) and moreover, the induced linear connection $\nabla$ on $\R_M$, given by $\nabla=\ve(\omega)|_{\Gamma(\frak k)}$, must be the canonical flat connection. This follows from the fact that the differential of $\exp\colon \R_M\ra S^1_M$ maps the induced linear connection, viewed as a distribution  $E^\nabla\subset T(\R_M)$, to the multiplicative Ehresmann connection $\smash{E^{S^1_M}}=T(S^1_M)\cap E$ where $E=\ker\omega$, see \cite{mec}*{equation (2.3)}. This implies the constant unit section of $\R_M$, $x\mapsto (x,1)$, must be flat, hence all constant sections are flat and thus $\nabla$ is indeed the canonical flat connection. Importantly, this holds due to our choice of integration; in general, an IM connection on $A$ for $\R_M$ can give rise to a different flat connection $\nabla$ on $\R_M$.

An important consequence of this observation is that any multiplicative connection $\omega$ on $\G$ for $\R_M$ must be primitive. In fact, an even more general statement holds.
\begin{lemma}
   \label{lem:gerbes_trivial_bss_cohomology}
   On an $S^1$-bundle gerbe $\Phi\colon\G\ra M\times_\pi M$ over a manifold $N$, horizontal multiplicative differential forms of positive degree are cohomologically trivial. In other words,
   \[
   H^1(\Omega^q(\G^{(\bullet)})^\Hor)\coloneq\Omega^q_m(\G)^\Hor / \im(\delta^0_\G) = 0,\quad\text{for all }q> 0.
   \]
   Here, horizontal forms are the forms $\alpha\in\Omega^q(\G)$ satisfying $\iota_X\alpha=0 $ for all $X\in\ker\d \Phi$.
\end{lemma}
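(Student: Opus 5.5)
The plan is to use the fact that horizontal multiplicative forms on $\G$ descend to the submersion groupoid $M\times_\pi M$, and then to use that submersion groupoids have trivial Bott--Shulman--Stasheff cohomology in positive simplicial degree. The key steps, in order, are as follows.

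First, I claim that a horizontal multiplicative form $\alpha\in\Omega^q_m(\G)^\Hor$ is basic for the surjective submersion $\Phi\colon\G\ra M\times_\pi M$. Horizontality, $\iota_X\alpha=0$ for $X\in\ker\d\Phi$, is given by assumption. For invariance along the fibres, note that the fibres of $\Phi$ are the $S^1$-orbits $g\mapsto g\cdot(s(g),z)$ of the central circle bundle, and these are connected. So it suffices to show $\L_{X}\alpha=0$ for the vector field $X$ generating the action. Here I would use multiplicativity \eqref{eq:multiplicative}: right multiplication by the bisection $x\mapsto (x,z)$ of $S^1_M$ pulls $\alpha$ back to $\alpha+\Ad(\cdots)\,(\text{pullback of }\alpha\text{ along the bisection})$. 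The coefficients are trivial by centrality. The second term vanishes because the bisection takes values in $\ker\Phi$, whose tangent vectors, modulo $TM$-directions, lie in $\ker\d\Phi$.

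The claim gives $\alpha=\Phi^*\bar\alpha$ for a unique $\bar\alpha\in\Omega^q(M\times_\pi M)$. Since $\Phi$ and $\Phi^{(2)}$ are surjective submersions commuting with the face maps, $\bar\alpha$ is again multiplicative, now with trivial coefficients. The multiplicativity identity $m^*\bar\alpha=\pr_1^*\bar\alpha+\pr_2^*\bar\alpha$ for the pair-type groupoid $M\times_\pi M$ yields $\bar\alpha=t^*\gamma-s^*\gamma$, where $\gamma:=u^*\bar\alpha$ is restricted along the units. The computation I would carry out is to set $\tilde\alpha=\bar\alpha-t^*\gamma+s^*\gamma$, which is multiplicative and vanishes on units. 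Then, using that $(y,x)=(y,x)\cdot(x,x)$, and more usefully that $(y,x)=(y,w)(w,x)$ locally for a section $w$ of $\pi$, one shows $\tilde\alpha=0$.

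Cleaner than this local argument is the standard fact that the simplicial manifold of $M\times_\pi M$ is the Čech nerve of $\pi$. Its columns $\Omega^q(M^{[\bullet+1]})$ form an exact augmented complex (the fundamental lemma of bundle gerbes, proved with a partition of unity subordinate to local sections of $\pi$). This gives $\bar\alpha=\delta^0\gamma$ directly, up to the sign convention in \eqref{eq:invariant} with trivial $\Ad$. Pulling back along $\Phi$ and using $s\circ\Phi=s$, $t\circ\Phi=t$ then gives $\alpha=\delta^0_\G\gamma$, as required.

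I expect the main obstacle to be the descent step. Verifying that multiplicativity together with horizontality forces $S^1$-invariance requires care with the tangent vectors of $\ker\Phi$ and with the form degree $q>0$. For $q=0$ the fibre directions give no constraint, so degree zero is genuinely different, which is consistent with the restriction $q>0$ in the statement. The exactness of the Čech nerve complex I would simply cite from the bundle gerbe literature, \cite{bundle_gerbes_original}.
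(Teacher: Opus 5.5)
Your overall route is the paper's: identify $\Omega^q_m(\G)^\Hor$ with $\Omega^q_m(M\times_\pi M)$ via $\Phi^*$, invoke Murray's exactness of the \v{C}ech-nerve complex of $\pi$, and pull back using $s\circ\Phi=s$ and $t\circ\Phi=t$. The second half is fine once the ``local argument'' is dropped (see below). The descent step, however, which you correctly flag as the crux, rests on a justification that does not work.

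\textbf{The descent step.} You need the extra term $s^*(b_z^*\alpha)$ in $R_{b_z}^*\alpha=\alpha+s^*(b_z^*\alpha)$ to vanish, where $b_z(x)=(x,z)$.
\begin{itemize}
\item Your reason fails. $\Phi\circ b_z$ is the unit map of $M\times_\pi M$, so $\d{\Phi}(\d{b_z}(v))=(v,v)\neq 0$. The vectors $\d{b_z}(v)$ are exactly the ``$TM$-directions'' of $S^1_M$, and horizontality says nothing about them.
\item Multiplicativity and horizontality alone cannot suffice. Take the trivial central $\R$-extension $(M\times_\pi M)\times\R$, with coordinate $a$ on the $\R$-factor, and $0\neq\kappa\in\Omega^1(N)$. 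The form $a\,(\pi\circ s)^*\kappa$ is horizontal and multiplicative, because $\pi\circ s=\pi\circ t$. Yet it is not translation invariant and not in $\im\delta^0_\G$.
\item The missing ingredient is compactness of $S^1$. Multiplicativity restricted to the bundle of groups $S^1_M$, where $\Ad$ is trivial, gives $b_{zw}^*\alpha=b_z^*\alpha+b_w^*\alpha$. Hence for fixed $x$ and $v_1,\dots,v_q\in T_xM$, the map $z\mapsto(b_z^*\alpha)_x(v_1,\dots,v_q)$ is a continuous homomorphism $S^1\to\R$, so it is zero.
\item With this, $R_{b_z}^*\alpha=\alpha$, and $\alpha$ is basic for the principal $S^1$-bundle $\Phi$.
\end{itemize}
This is exactly what the paper compresses into the well-definedness of $\ul{\alpha}$: two arrows $x\to y$ over the same point of $M\times_\pi M$ differ by some $b_z(x)$. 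Note that the difficulty has nothing to do with the degree. The same compactness argument and Murray's lemma work for $q=0$, so your remark that degree zero is ``genuinely different'' is off.

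\textbf{The local primitive.} Your first recipe on $M\times_\pi M$ cannot work. Evaluating multiplicativity at $(1_x,1_x)$ shows $u^*\bar\alpha=0$ for every multiplicative $\bar\alpha$. So setting $\gamma:=u^*\bar\alpha$ would force $\bar\alpha=0$. A correct local primitive is $(\id,w\circ\pi)^*\bar\alpha$ for a local section $w$ of $\pi$, up to sign, obtained by writing $(y,x)=\sigma(y)\sigma(x)^{-1}$ with $\sigma=(\id,w\circ\pi)$. These local primitives patch together with a partition of unity pulled back from $N$. That is Murray's lemma, which you then cite, as the paper does.
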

\begin{proof}
First note that there holds $\Omega^q_m(\G)^\Hor\cong \Omega^q_m(M\times_\pi M)$, with the isomorphism given by the pullback $\Phi^*$, whose inverse is $\alpha\mapsto \ul\alpha$, 
$
\ul\alpha_{(y,x)}(X_i)\coloneq\alpha_g(\tilde X_i)_i
$
where $\tilde X_i$ are arbitrary lifts of vectors $X_i\in T_{(y,x)}(M\times_\pi M)$ along $\d \Phi$, and $g\colon x\ra y$ is any arrow in $\G$. Well-definedness of this map is a straightforward consequence of multiplicativity and horizontality of $\alpha$. On the submersion groupoid, the Bott--Shulman--Stasheff complex $\Omega^q((M\times_\pi M)^{(\bullet)})$ is acyclic (see \cite{bundle_gerbes_original}*{\sec 8}); in particular, every multiplicative form $\ul\alpha\in\Omega^q_m(M\times_\pi M)$ is cohomologically trivial, i.e., $\ul\alpha=\pr_2^*\beta-\pr_1^*\beta$ for some $\beta\in\Omega^q(M)$, and thus so is every horizontal multiplicative form on $\G$, since $\alpha=\Phi^*\ul\alpha=s^*\beta-t^*\beta$.
\end{proof}
\noindent Looking at the curvature of a MEC $\omega$, the structure equation  (item (ii) after Definition \ref{def:mec_curvature}) reads
\[
\Omega^\omega= \d\omega \in\Omega^2_m(\G).
\]
Since $\Omega^\omega$ is horizontal and multiplicative, the lemma shows there exists a curving $F\in\Omega^2(M)$, i.e.,
\begin{align*}
   \Omega^\omega=\delta^0_\G F=s^*F-t^*F,
\end{align*}
so every MEC for $\Phi\colon\G\ra M\times_\pi M$ is primitive.
Note that due to centrality of extension, invariant forms $\Omega_\ginv^\bullet(M)$ are precisely the ones in the image of the pullback $\pi^*\colon \Omega^\bullet (N)\ra \Omega^\bullet (M)$, 
\[
\Omega_\ginv^\bullet(M) = \pi^*\Omega^\bullet(N).
\]
Hence, for instance, $F$ is only unique up to a form $\beta=\pi^*\tilde\beta$ for some $\tilde\beta\in\Omega^2(N)$, and the curvature 3-form $G=\d F$ must equal $G=\pi^* \tilde G$ for some unique $\tilde G\in\Omega^3(N)$.

Equipped with this knowledge, we are now ready to formulate and study the desired Yang--Mills theory of MECs for the extension \eqref{eq:gerbe_sequence}. As input data, we take the canonical ($\ad$-invariant) metric on $\R_M$, and fix a pseudo-Riemannian metric $g$ on $M$. 
Importantly, note that Lemma \ref{lem:gerbes_trivial_bss_cohomology} implies that any two MECs must differ by a cohomologically trivial 1-form; in other words, $\A(\G)/\im(\delta^0_\G)$ is a singleton. Therefore, the pairs of bundle gerbe connections and their curvings,
\begin{align*}
   \DD(\G)=\set*{(\omega,F)\in\A(\G)\times \Omega^2(M)\given 
   \Omega^\omega=\delta^0_\G F
   },
\end{align*}
form an affine space modelled on the vector space $(\Omega^1(M)\oplus \pi^*\Omega^2(N))/\pi^*\Omega^1(N)$, and the constraint of the variational problem in Theorem \ref{thm:main_theorem_groupoids} is vacuous. Combining everything, we conclude:

\begin{theorem}
   \label{thm:ym_gerbes}
   Let $\G\ra M\times_\pi M$ be an $S^1$-bundle gerbe over a manifold $N$, where $M$ is a compact, oriented, pseudo-Riemannian manifold. A pair $(\omega,F)\in \DD(\G)$ is a critical point for the action 
   \begin{align}
      \label{eq:ym_action_gerbes}
      \S(\omega,F)=\int_M F\wedge \star F + \mu \int_M G\wedge \star G,\quad (\mu\in\R\text{ fixed}),
   \end{align}
   if and only if its curvature pair $(F,G=\d F)$ satisfies the Euler--Lagrange conditions
   \begin{align}
      \d{}^*F&=0,\label{eq:YM.I_gerbes}\\
      F+\mu\d{}^* G&\perp \pi^*\Omega^2(N),\label{eq:YM.II_gerbes}
   \end{align}
   where the covariant coderivative reads $\d{}^*=(-1)^k\star^{-1}\!{\d{}}\,\star $ on forms of degree $k$.
\end{theorem}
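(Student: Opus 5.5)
The plan is to obtain Theorem \ref{thm:ym_gerbes} as a specialization of Theorem \ref{thm:main_theorem_groupoids} to the extension \eqref{eq:gerbe_sequence}, with $\frak k=\R_M$ and the canonical metric on $\R_M$. This metric is $\Ad$-invariant because centrality makes $\Ad\colon\G\curvearrowright\R_M$ trivial. Three inputs have to be made explicit.

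First, the induced connection $\nabla$ is the canonical flat connection, as shown just before Lemma \ref{lem:gerbes_trivial_bss_cohomology}. Hence $\d{}^\nabla=\d$ and $G=\d F$. Second, the pointwise pairing $\inner FF_{\frak k}\vol_M$ is simply $F\wedge\star F$, since $\frak k$ is the trivial line bundle with the product metric. So the action \eqref{eq:ym_action_G} is exactly \eqref{eq:ym_action_gerbes}. Third, $\nabla$ is flat and compatible with the constant metric, so the formal adjoint is given by \eqref{eq:delta_nabla}. This gives $\d{}^*=(-1)^k\star^{-1}\d\star$ on $k$-forms.

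The next step is to identify the constraint and the invariant forms. By Lemma \ref{lem:gerbes_trivial_bss_cohomology}, any two MECs differ by a horizontal multiplicative $1$-form, which is then multiplicatively exact. Hence $\A(\G)/\im\delta^0_\G$ is a singleton, and every MEC is primitive because $\Omega^\omega=\d\omega$ is horizontal and multiplicative. So $\DD(\G)=\DD_\chi(\G)$ for the unique class $\chi$. The constrained variational problem in Theorem \ref{thm:main_theorem_groupoids} is therefore just the unconstrained critical point problem on the affine space $\DD(\G)$. Using centrality, we also have $\Omega^\bullet_\ginv(M)=\pi^*\Omega^\bullet(N)$. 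Substituting into \eqref{eq:YM.Ig} and \eqref{eq:YM.IIg} gives \eqref{eq:YM.I_gerbes} and \eqref{eq:YM.II_gerbes}.

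The only point needing care is the equality $\Omega^\bullet_\ginv(M)=\pi^*\Omega^\bullet(N)$. I would check it directly from \eqref{eq:invariant}. With trivial $\Ad$, a form $\gamma$ is invariant if and only if $s^*\gamma=t^*\gamma$ on $\G$. Since $\Phi=(t,s)$ is a surjective submersion onto $M\times_\pi M$, this is equivalent to $\pr_2^*\gamma=\pr_1^*\gamma$ on the submersion groupoid. That in turn holds exactly when $\gamma$ is basic for $\pi$, using the standard descent for surjective submersions as in the acyclicity result quoted in the lemma's proof. Connectedness of the $\pi$-fibres is not needed, because the condition is imposed on all pairs of points in the same fibre. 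Once this is settled, the rest is immediate. Compactness of $M$ ensures the integrals are finite and that Theorem \ref{thm:main_theorem_groupoids} applies verbatim.
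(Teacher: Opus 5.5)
Your proposal is correct and follows the paper's own route: specialize Theorem \ref{thm:main_theorem_groupoids} using that the induced connection on $\R_M$ is the canonical flat one, that Lemma \ref{lem:gerbes_trivial_bss_cohomology} makes every MEC primitive and the cohomological constraint vacuous, and that $\Omega^\bullet_\ginv(M)=\pi^*\Omega^\bullet(N)$. Your justification of that last identification is a welcome addition, since the paper only asserts it as a consequence of centrality; the argument uses injectivity of $\Phi^*$ for the surjective submersion $\Phi=(t,s)$ together with exactness of $\Omega^\bullet(N)\to\Omega^\bullet(M)\to\Omega^\bullet(M\times_\pi M)$, with no fibre-connectedness needed.
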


Let us first inspect the case $\dim N= 1$, when the situation is as in \sec\ref{sec:vanishing_invariant_forms}. Let us focus on the case when the manifold $M$ is Riemannian. The second equation \eqref{eq:YM.II_gerbes} is vacuously true, and so the focus is on solving \eqref{eq:YM.I_gerbes}. To show it admits a solution, first take any pair $(\omega,F)$. Note that since the curvature 3-form $G=\d F$ vanishes, Hodge theorem ensures $[F]\in H^2_{dR}(M)$ has a (unique) harmonic representative, i.e., there exists a form $\gamma\in\Omega^1(M)$, unique up to a closed 1-form, such that $\d{}^*(F+\d\gamma)=0$. Since $\R_M$ is abelian,  the following pair is thus a critical point of \eqref{eq:ym_action_gerbes}: \[(\omega+\delta^0_\G\gamma, F+\d\gamma)\in\DD(\G).\]

Let us now discuss the case $\dim N\geq 2$. For intuition, we will first show a simple example of a 2-form $F$, such that the pair $(F,G=\d F)$ satisfies equation \eqref{eq:YM.I_gerbes} and relation \eqref{eq:YM.II_gerbes}, for the cases when the leaf space $N$ has dimensions $\dim N=2,3$. Then, we will produce a nontrivial example of a MEC on a bundle gerbe which admits the constructed form $F$ as its curving.
\begin{example}
   \needspace{2cm}
   Let $M=T^4=(\R/2\pi\Z)^4$ be the  standard flat Riemannian 4-torus with coordinates $(x,y,z,t)$, so the metric is $g=\d x^2+\d y^2 + \d z^2+\d t^2$ and the orientation is $\vol=\d x\wedge\d y\wedge \d z\wedge \d t$.\
   \begin{enumerate}
      \item For a leaf space with dimension $\dim N=2$, take $N=T^2$ and $\pi(x,y,z,t)=(x,y)$. In this case, the constant 2-form $F=\d z\wedge \d t$ clearly satisfies the Bianchi identity $\d F=0$, as well as the Euler--Lagrange equation $\d{}^*F=0$. The relation \eqref{eq:YM.II_gerbes}, which now reads 
      \begin{align}
         \label{eq:dimN=2}
         F\perp \pi^*(\Omega^2(N)),
      \end{align}
      is also clearly satisfied since the basic forms are generated by $\d x\wedge \d y$. Note we could also take $F$ to be any other \textit{vertical} harmonic form, that is, one in which all terms contain either $\d z$ or $\d t$, so any $\R$-linear combination in $\langle\d x\wedge \d z, \d x\wedge \d t, \d y\wedge \d z, \d y\wedge \d t, \d z\wedge\d t\rangle$. Since all harmonic forms on $T^4$ are linear combinations of constant forms, these are all the harmonic 2-forms satisfying the partial differential relation \eqref{eq:dimN=2}, so these are all the solutions to the Euler--Lagrange equation and relation.
      \item For a leaf space with dimension $\dim N=3$, take $N=T^3$ and $\pi(x,y,z,t)=(x,y,z)$. Define $F$ as a sum of a coclosed basic part and a  harmonic vertical part; for instance,
      \[
      F=\cos(x) \d y\wedge \d z + \d t\wedge \d x.
      \]
      Now, the 3-form $G=\d F=-\sin(x)\d x\wedge \d y\wedge \d z$ is basic, and importantly, the basic part of $F$ is moreover chosen so that it is an eigenform of the operator $\d{}^*\!\d{}$, with eigenvalue 1. Therefore,
      \[
      F-\d{}^*G=\d t\wedge \d x \perp \pi^*(\Omega^2(N)),
      \]
      hence the pair $(F,G)$ is a solution to \eqref{eq:YM.I_gerbes} and \eqref{eq:YM.II_gerbes} for the constant $\mu=-1$.

      We now show that the constructed 2-form $F$ is indeed an example of a curving of a MEC. To this end, we consider the trivial\footnote{Since $G$ is an exact 3-form on $N$, the only $S^1$-bundle gerbe possibly admitting a MEC with curving $F$ is the trivial one, since this forces the Dixmier--Douady class to vanish.} bundle gerbe over $N=T^3$. In the language of groupoids, a trivial bundle gerbe is an $S^1$-bundle gerbe which is moreover a groupoid of \textit{principal type} \cite{mec}*{\sec 3.5} over the submersion groupoid. That is, $\G$ is the fibred product of the submersion groupoid with a gauge groupoid $\G(P)$ of a principal $S^1$-bundle $p\colon P\ra M$, 
      \begin{align*}
         \qquad\ \G=\G(P)\times_M (M\times_\pi M)=\set{([u_2,u_1],(y,x))\in \G(P)\times (M\times_\pi M)\given p(u_1)=x,p(u_2)=y}.
      \end{align*}
      Let $P=M\times S^1$ be the  trivial principal bundle, so $\G(P)\cong M\times S^1\times M$. The Lie groupoid extension \eqref{eq:gerbe_sequence} is then the following SES of tori over $M=T^4$,
   \begin{align}
      \begin{tikzcd}[ampersand replacement=\&]
      	1 \& T^5 \& T^6 \& T^5 \& 1.
      	\arrow[from=1-1, to=1-2]
      	\arrow[from=1-2, to=1-3]
      	\arrow[from=1-3, to=1-4]
      	\arrow[from=1-4, to=1-5]
      \end{tikzcd}
   \end{align}
   Here, the structure of $T^6$ is the obvious one and so are the maps.
   In accord with the notation above, let us denote the coordinates on $T^6$ by $(x,y,z,t_2,t_1,\varphi)$.  It is not hard to see that 
   \[
   \omega= \d \varphi + (t_1-t_2)\d x\in \Omega^1(\G)
   \]
   defines a multiplicative Ehresmann connection on the bundle gerbe $T^6\ra T^5$. Its abstract curvature 2-form reads  $\Omega^\omega = \d\omega = \d(t_1-t_2)\wedge \d x$, which is precisely $s^*F-t^*F=\delta^0_\G F$. We  conclude $(\omega,F)$ is a critical point of the action functional \eqref{eq:ym_action_G}. 
\end{enumerate}

\end{example}

\begin{remark}
   In the action functional \eqref{eq:ym_action_gerbes} of Theorem \ref{thm:ym_gerbes}, what happens if we omit the 2-curvature term, keeping only the 3-curvature term? In this case, when $\mu\neq 0$, the Euler--Lagrange equation just reads $\d{}^*G\perp \pi^*\Omega^2(N)$. If the metric on  $M$ is chosen $\pi$-invariant (i.e., it descends to a metric on the leaf space $N$), and if  $N$ is also oriented, then $\d{}^*$ preserves pullback forms. Therefore, the Euler--Lagrange equation boils down to 
   \[
   \d{}^* G=0,
   \]
   so the resulting variational problem is about finding pairs $(\omega,F)\in\DD(\G)$ with \textit{harmonic 3-curvature}. At last, for any MEC $\omega$ on $\G$, Hodge's theorem implies there is a curving $F$ with harmonic $G$.
\end{remark}

\subsection{Riemannian manifolds with harmonic curvature}
In this section, we show how the theory of harmonic Riemannian manifolds \cites{harmonic_riemannian_curvature, harmonic_riemannian_dim_4, jost_riemannian_geometry} can  be viewed in the developed framework. Roughly, we will see it corresponds to Yang--Mills theory for the general linear algebroid. Since it is a transitive and integrable example, it has a classical interpretation: it corresponds to Yang--Mills theory for the (orthogonal) frame bundle; the purpose here is to expose the simplicity of classical Yang--Mills theory when expressed with Lie algebroids.

Suppose $V\ra M$ is a vector bundle with a Riemannian fibrewise metric $\kappa=\inner\cdot\cdot_V$, and consider the general linear algebroid $\frak{gl}(V)$. Let $\frak o(V)$ denote its subalgebroid consisting of all derivations compatible with the metric $\kappa$, as defined in \eqref{eq:metric_derivation}. 
The associated short exact Atiyah  sequence reads
\[\begin{tikzcd}
	0 & {\End_\kappa(V)} & {\frak{o}(V)} & TM & 0,
	\arrow[from=1-1, to=1-2]
	\arrow[from=1-2, to=1-3]
	\arrow[from=1-3, to=1-4]
	\arrow[from=1-4, to=1-5]
\end{tikzcd}\]
where $\End_\kappa(V)$ denotes the Lie algebra bundle of endomorphisms of the fibres of $V$ which are skew-symmetric with respect to the metric $\kappa$. A splitting of this sequence is just a metric linear connection on $V$, and the space of metric connections is an affine space over the space $\Omega^1(M;\End_\kappa (V))$ of skew-symmetric endomorphism-valued 1-forms on $M$. For any splitting $\sigma\colon TM\ra \frak{o}(V)$, let us accordingly denote $\nabla_X=\sigma(X)$ for any $X\in TM$. The induced linear connection $\nabla^\sigma$ on $\End_\kappa(V)$ is then simply the restriction of the induced connection $\nabla^{\End V}$ on $\End(V)$, which we will denote by the same symbol, $\nabla^\sigma=\nabla^{\End V}$. The curvature of a splitting $\sigma$ is just the curvature tensor of $\nabla$:
\[
	F^\sigma\in\Omega^2(M;\End V),\quad F^\sigma(X,Y)\xi=\nabla_{[X,Y]}\xi-[\nabla_X,\nabla_Y]\xi=-R^\nabla(X,Y)\xi,
\]
which also has values in skew-symmetric endomorphisms of $V$. Now, the metric $\kappa$ induces a metric $\inner\cdot\cdot_{\End V}$ on $\End V$, whose restriction to the skew-symmetric endomorphisms $\End_{\kappa}(V)$ is ad-invariant. 
To construct a Yang--Mills theory, we need to also fix a metric and an orientation on the base $M$.
The action functional then reads
\[
\S(\nabla)=\int_M\innersmall{R^\nabla}{R^\nabla}_{\End V}\vol_M.
\]
Given any metric connection $\nabla$, the induced connection $\nabla^{\End V}$ is compatible with $\inner\cdot\cdot_{\End V}$, hence $\nabla$ is critical if and only if its curvature satisfies
\[
\d{}^{\nabla^{\End V}}\!\star R^\nabla=0.
\]
Since the Bianchi identity $\d{}^{\nabla^{\End V}}R^\nabla=0$ always holds, this is equivalent to saying that $R^\nabla$ is harmonic. If we take $V=TM$, this amounts to the vanishing divergence of $R^\nabla$, and examples of such Riemannian manifolds include 4-dimensional Einstein manifolds and conformally flat 4-manifolds of constant scalar curvature. We refer the reader to \cites{harmonic_riemannian_curvature, harmonic_riemannian_dim_4} for more details and interesting properties of Riemannian manifolds with harmonic curvature.

\section{Related generalizations}
This section is devoted to a brief and somewhat speculative discussion about the relationship of our generalization of Yang--Mills theory with other generalizations which have already been developed \cites{higher_ym, fischer}. We suspect that the different generalizations mentioned below, as well as ours, are particular cases within the umbrella framework of so-called \textit{$\PB$-groupoids}; these are Lie groupoid objects in the category of principal bundles. A general notion of connections, as well as a Yang--Mills theory, are yet to be developed for these geometric objects.

\SkipTocEntry\subsection*{Yang--Mills theory on principal 2-bundles}
  In \cite{higher_ym}, Baez develops a Yang--Mills theory for (trivial) principal 2-bundles. Roughly, these are higher-geometric analogues of principal bundles, where the structure Lie group is replaced with a structure Lie 2-group \cite{higher_gauge}. In this setting, connections provide a means for parallel transport over surfaces instead of only paths; from a physics perspective, this is interpreted as parallel transport of strings along branes. The Euler--Lagrange equations found in \cite{higher_ym}*{Theorem 21} are strikingly similar to our equations \eqref{eq:YM.I} and \eqref{eq:YM.IIe}. The similarity is already apparent at the level of action functionals, if one compares \cite{higher_ym}*{equation (9)} with our action  \eqref{eq:ym_action}. 
   Despite the similarities, the framework of principal 2-bundles is conceptually very different from ours, and there is no clear way to view one theory as an instance of the other. Instead, as mentioned, we believe the similarities come from the fact that the two (global) frameworks are particular cases within a more general theory on \textit{$\PB$-groupoids}.

\SkipTocEntry\subsection*{Yang--Mills theory on principal LGB-bundles}
In \cites{fischer_massless, fischer}, Fischer develops a Yang--Mills theory for principal LGB-bundles. Roughly, a principal LGB-bundle is a generalization of a principal bundle, where the structure Lie group is replaced with a Lie group bundle (LGB). Infinitesimally, this means the structure constants of the Lie algebra become functions. The notion of a connection in \textit{loc.\,cit.}\ depends on a fixed choice of a Cartan connection on the structure Lie group bundle, and the dynamical fields are the connections on the principal bundle, which are compatible with the fixed Cartan connection. What the author there calls a field redefinition, appears  related to our Lemma \ref{lemma:affine_primitive}. 
However, as with principal 2-bundles, the framework of principal LGB-bundles is conceptually entirely different from ours, and there is no clear overlap between the two theories.

\vspace{0.5em}

It appears that the similarities and differences between the various settings are a manifestation of the fact that they are merely particular cases of a more general framework of $\PB$-groupoids. Roughly, a $\PB$-groupoid is a surjective submersion of Lie groupoids, with a principal (right) action of a Lie 2-groupoid $H_2\rra H_1\rra H_0$. The situation is depicted with the following diagram, where the curved arrows are the moment maps of the actions by the groupoids $H_2$ and $H_1$. 
\begin{align}
   \label{eq:pb}
   \begin{tikzcd}[ampersand replacement=\&,row sep=tiny]
   	\& H_2 \&[-2.4em] \curvearrowright \&[-2.4em] {P_G} \& G \\
      H_0 \& \& \& \& \\
   	\& H_1 \& \curvearrowright \& {P_M} \& M
   	\arrow[from=1-4, to=1-5]
   	\arrow[shift left, from=1-4, to=3-4]
   	\arrow[shift right, from=1-4, to=3-4]
   	\arrow[shift left, from=1-5, to=3-5]
   	\arrow[shift right, from=1-5, to=3-5]
   	\arrow[shift left, from=1-2, to=3-2]
   	\arrow[shift right, from=1-2, to=3-2]
   	\arrow[shift left, from=1-2, to=2-1]
   	\arrow[shift right, from=1-2, to=2-1]
   	\arrow[shift left, from=3-2, to=2-1]
   	\arrow[shift right, from=3-2, to=2-1]
   	\arrow[from=3-4, to=3-5]
   	\arrow[from=1-4, to=2-1, bend left=-45]
   	\arrow[from=3-4, to=2-1, bend left=45]
   \end{tikzcd}
\end{align}
It is already known that $\PB$-groupoids simultaneously generalize ordinary principal bundles, principal 2-bundles, principal groupoid-bundles (in particular, principal LGB-bundles), and Lie groupoid extensions. They also provide a natural setting for constructing the frame bundle of a $\vb$-groupoid \cites{pb_groupoids, pb_vb_groupoids}. We speculate that the various notions of connections in the mentioned settings above are just particular instances of connections on particular cases of $\PB$-groupoids. 

\appendix

\section{Ad-invariance for Lie groupoids versus Lie algebroids}
Let $\G$ be a Lie groupoid integrating a Lie algebroid $A$. This appendix is dedicated to showing that if $\G$ is $s$-connected,  $\ad$-invariance of a metric $\inner\cdot\cdot_{\frak k}$ on a bundle of ideals $\frak k$ of $A$ is equivalent to its $\Ad$-invariance. Preliminarily, we must note that any bundle of ideals on a Lie algebroid $A$ is necessarily also a bundle of ideals on $\G$, provided $\G$ has connected source fibres, so we automatically get a representation $\Ad\colon \G\curvearrowright\frak k$ from $\ad\colon A\curvearrowright \frak k$. This fact is shown in \cite{rigidity_poisson_submanifolds}*{Appendix B}, and our desired result relies on the method of proof therein.

\begin{lemma}
	\label{lem:pairing_A_invariant_implies_G_invariant}
	Let $\inner\cdot\cdot_{\frak k}$ be an $\ad$-invariant metric on a bundle of ideals $\frak k$ of a Lie algebroid $A$, and suppose a Lie groupoid $\G$ integrates $A$. If $\G$ has connected $s$-fibres, then $\inner{\cdot}{\cdot}_{\frak k}$ is also $\Ad$-invariant, that is, for any $g\in \G$ and $\xi,\eta\in\frak k_{s(g)}$, there holds
	\begin{align}
		\label{eq:Ad_invariance}
		\inner{\Ad_g\xi}{\Ad_g\eta}_{\frak k}=\inner{\xi}{\eta}_{\frak k}.
	\end{align}
	Conversely, $\Ad$-invariance implies $\ad$-invariance regardless of connectivity of $s$-fibres of $\G$.
\end{lemma}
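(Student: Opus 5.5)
We start with the converse, which is the easy direction. Suppose $\inner\cdot\cdot_{\frak k}$ is $\Ad$-invariant. Fix $\alpha\in\Gamma(A)$ and let $\phi^{\alpha^L}_\lambda$ be the flow of the left-invariant vector field $\alpha^L$. Set $g_\lambda=\phi^{\alpha^L}_\lambda(1_x)$, which is defined for small $\lambda$. The derivative of $\Ad$ along such curves gives the adjoint action: for $\xi\in\Gamma(\frak k)$,
\[
\deriv\lambda 0 \Ad_{g_\lambda^{-1}}\xi_{t(g_\lambda)}=[\alpha,\xi]_x .
\]
This is the standard relation between the representations $\Ad$ and $\ad$; it is the same mechanism behind formula \eqref{eq:ve_formula}. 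We then differentiate
\[
\inner{\Ad_{g_\lambda^{-1}}\xi_{t(g_\lambda)}}{\Ad_{g_\lambda^{-1}}\eta_{t(g_\lambda)}}_{\frak k}=\inner{\xi}{\eta}_{\frak k}(t(g_\lambda))
\]
at $\lambda=0$. The right-hand side gives $\rho(\alpha)\inner\xi\eta_{\frak k}$, since $t\circ g_\lambda$ is an integral curve of $\rho(\alpha)$ (up to sign conventions). The left-hand side gives $\inner{[\alpha,\xi]}{\eta}_{\frak k}+\inner\xi{[\alpha,\eta]}_{\frak k}$. Together these yield \eqref{eq:ad_invariance}, and no connectedness of $\G$ is used.

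For the main direction, we view $\inner\cdot\cdot_{\frak k}$ as a section $\kappa\in\Gamma(S^2\frak k^*)$. The representation $\Ad\colon\G\curvearrowright\frak k$, which exists by \cite{rigidity_poisson_submanifolds} since $\G$ is $s$-connected, induces a representation of $\G$ on $S^2\frak k^*$. Its infinitesimal counterpart is the $A$-representation on $S^2\frak k^*$ induced by $\ad$. In this language, \eqref{eq:ad_invariance} says exactly that $\kappa$ is $A$-parallel, i.e.\ $\nabla^A_\alpha\kappa=0$ for all $\alpha$. The claim \eqref{eq:Ad_invariance} says that $\kappa$ is $\G$-invariant, i.e.\ $g\cdot\kappa_{s(g)}=\kappa_{t(g)}$.

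The key step is the following general fact, proved by the method of \cite{rigidity_poisson_submanifolds}: \emph{for an $s$-connected groupoid, an $A$-invariant section of a $\G$-representation is $\G$-invariant}. To prove it, fix $x\in M$ and consider on the connected fibre $s^{-1}(x)$ the function
\[
f(g)=g^{-1}\cdot\kappa_{t(g)}-\kappa_x\in S^2\frak k_x^* .
\]
We compute the derivative of $f$ along right-invariant vector fields $\alpha^R$, which are tangent to $s$-fibres and span them. The flow of $\alpha^R$ is left multiplication by the flow bisection, so the derivative reduces via the group law to the infinitesimal action: $\mathcal L_{\alpha^R}f\,(g)=g^{-1}\cdot(\nabla^A_\alpha\kappa)_{t(g)}=0$. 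Hence $df=0$ on the connected manifold $s^{-1}(x)$. Since $f(1_x)=0$, it follows that $f\equiv0$, and this is \eqref{eq:Ad_invariance}.

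The main obstacle is making this derivative computation rigorous, in particular identifying $\deriv\lambda0 (\phi^{\alpha^R}_\lambda(g))^{-1}\cdot\kappa$ with $g^{-1}\cdot\nabla^A_\alpha\kappa$. The identification needs the identity $\phi^{\alpha^R}_\lambda(g)=\phi^{\alpha^R}_\lambda(1_{t(g)})\,g$ together with the definition of the $A$-action as the derivative of the $\G$-action along these curves. Once this is checked, the rest is the connectedness argument above.
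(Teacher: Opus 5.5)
Your proof is correct, and it handles the passage from a neighbourhood of the units to all of $\G$ differently from the paper.

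\textbf{The paper's route.} It views $([\alpha,\cdot],\rho(\alpha))$ as a derivation of $\frak k$. By $\ad$-invariance this derivation is metric-compatible, so its flow acts by isometries. Using the identity $\Phi^{[\alpha,\cdot]}_\lambda=\Ad_{\exp(\lambda\alpha)}$ from \cite{rigidity_poisson_submanifolds} (assuming $\rho(\alpha)$ complete for simplicity), it gets $\Ad$-invariance on the elements $\phi^{\alpha^L}_\lambda(1_x)$. It then extends to all of $\G$ because a neighbourhood of the units generates an $s$-connected groupoid.

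\textbf{Your route.} You treat the metric as a section $\kappa$ of the $\G$-representation $S^2\frak k^*$. You then prove a general principle: on an $s$-connected groupoid, an $A$-parallel section of a $\G$-representation is $\G$-invariant. The mechanism is that $f(g)=g^{-1}\cdot\kappa_{t(g)}-\kappa_x$ has zero derivative along the right-invariant fields spanning $T s^{-1}(x)$, so it is constant on the connected fibre.

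\textbf{What each approach buys.}
\begin{itemize}
\item Your argument only needs the infinitesimal fact that $\Ad$ differentiates to $\ad$, plus its compatibility with tensor constructions.
\item It works with local flows at $\lambda=0$, so no completeness assumption is needed.
\item It avoids both the generation result and the isometry property of flows of metric derivations.
\item It applies verbatim to any representation and any invariant tensor.
\item The paper's route instead shows explicitly that the inner automorphisms $\Ad_{\exp(\lambda\alpha)}$ act isometrically on $\frak k$. These are the gauge transformations of Example~\ref{ex:gauge_transformations}, and this fact is what the paper invokes there.
\end{itemize}

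\textbf{A minor convention point.} In your converse you mix conventions. With the paper's $\alpha^L$, which is tangent to $t$-fibres, $t(g_\lambda)=x$ is fixed and $s(g_\lambda)$ moves. So you should differentiate $\inner{\Ad_{g_\lambda}\xi_{s(g_\lambda)}}{\Ad_{g_\lambda}\eta_{s(g_\lambda)}}_{\frak k}=\inner{\xi}{\eta}_{\frak k}(s(g_\lambda))$. Your main step, by contrast, uses right-invariant fields tangent to $s$-fibres. Once you fix a single convention the computation goes through unchanged.
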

\begin{proof}
	This is shown by a standard trick, namely, we first show that the equality above holds for all $g$ in a neighborhood of the units $u(M)\subset \G$. Since any neighborhood of the units generates $\G$ by $s$-connectedness (for instance, see \cite{mackenzie}*{Proposition 1.5.8}), the equality \eqref{eq:Ad_invariance} then holds for all $g\in \G$. Hence, it is enough to show that for any $\xi,\eta\in\Gamma(\frak k)$, there holds
	\begin{align}
		\label{eq:ad_implies_Ad_intermediate}
		\inner{\Ad_{g_\lambda}(\xi_{s(g_\lambda)})}{\Ad_{g_\lambda}(\eta_{s(g_\lambda)})}_{\frak k}=\inner{\xi_{s(g_\lambda)}}{\eta_{s(g_\lambda)}}_{\frak k},\ \text{where}\ g_\lambda\coloneqq\phi^{\alpha^L}_\lambda(1_x),
	\end{align}
	 for any  $\alpha\in\Gamma(A)$ and all times $\lambda$ for which the integral path of $\alpha^L$ through $1_x$ is defined. 

	 To show this, we utilize the notion of \textit{derivations}, also known as \textit{covariant differential operators} on a vector bundle $V\ra M$, which are the sections of the general linear algebroid $\frak{gl}(V)\Ra M$,
    \[
    \Gamma(\frak{gl}(V))=\set*{\big(D\colon\Gamma(V)\xrightarrow{\R\text{-lin.}} \Gamma(V), X\in\vf(M)\big)\given  D(f\sigma)=fD(\sigma)+X(f)\sigma}.
    \] 
    We recall that the \emph{flow} of a derivation $(D,X)\in\Gamma(\frak{gl}(V))$ is defined as the vector bundle map $\Phi^D_\lambda\colon V\ra V$ covering the flow $\phi^{X}_\lambda$ of $X\in\vf(M)$, denoted $\Phi^D_\lambda(x)\colon V_x\ra V_{\phi^X_\lambda(x)}$, which is the solution to the following differential equation.
	 \begin{equation}
		\label{eq:flow_of_derivation}
		\begin{aligned}
			\frac{d}{d\lambda}(\Phi^D_\lambda)^*(\xi)&=(\Phi^D_\lambda)^*(D\xi),\\ 
			\Phi^D_0&=\id_V.
		\end{aligned}
		\qquad
		\vcenter{\hbox{
			\begin{tikzcd}
				V & V \\
				M & M
				\arrow["{\Phi^{D}_\lambda}", from=1-1, to=1-2]
				\arrow[from=1-1, to=2-1]
				\arrow[from=1-2, to=2-2]
				\arrow["{\phi^{X}_\lambda}"', from=2-1, to=2-2]
			\end{tikzcd}
		}}
	\end{equation}
	Here, we have denoted $(\Phi^D_\lambda)^*(\xi)_x=\Phi^D_{-\lambda}(\phi^X_\lambda(x))\xi_{\phi^X_\lambda(x)}$. Given a metric $\inner\cdot\cdot_V$ on $V$, a derivation $(D,X)$ is said to be \emph{compatible} with the metric, if there holds
\begin{align}
	\label{eq:metric_derivation}
		X\inner{\xi}{\eta}_V=\inner{D\xi}{\eta}_V+\inner{\xi}{D\eta}_V\, \text{ for all }\,\xi,\eta\in\Gamma(V).
\end{align}
	Now, this is equivalent to saying that the flow $\Phi^D_\lambda$ acts on $V$ by isometries --- proving this amounts to proving the standard fact that parallel transport of a metric connection acts on $V$ by isometries.
	In our case, we take the vector bundle $V=\frak k$. Any section  $\alpha\in\Gamma(A)$ defines a derivation 
	$([\alpha,\cdot],\rho(\alpha))\in\Gamma(\frak{gl}(\frak k))$, and we may assume for simplicity that $\rho(\alpha)\in\vf(M)$ is complete, so that $\alpha^L$ is also complete by \cite{mackenzie}*{Theorem 3.6.4}. It is shown in \cite{rigidity_poisson_submanifolds}*{Equation (43)} that its flow equals 
	\[
	\Phi_\lambda^{[\alpha,\cdot]}=\Ad_{\exp(\lambda\alpha)},
	\]
	that is, $\smash{\Phi^{[\alpha,\cdot]}_\lambda}$ equals the induced Lie algebroid map associated to the inner automorphism $I_{\exp(\lambda\alpha)}$ of $\G$, induced by the bisection $\exp(\lambda\alpha)$; see the discussion after the proof for notation. Using ad-invariance of $\inner\cdot\cdot_{\frak k}$, we conclude $\Ad_{\exp(\lambda\alpha)}$ acts on $\frak k$ by isometries. Since for any $\xi\in\frak k_x$, there holds $\Ad_{\exp(\lambda\alpha)}(\xi)=\Ad_{g_\lambda^{-1}}(\xi)$ where $g_\lambda$ is as in equation \eqref{eq:ad_implies_Ad_intermediate}, we are done.
\end{proof}
In the proof above, a \textit{bisection} $b\in\mathrm{Bis}(\G)$ of the groupoid $\G$ is a map
\begin{align}
\label{eq:bisection_gauge}
	b\colon M\ra \G,\quad t\circ b=\id_M,\quad s\circ b=\varphi,
\end{align}
where $\varphi\colon M\ra M$ is a diffeomorphism. It is called a \textit{static} bisection, if $\varphi=\id_M$, in which case $b$ maps into the isotropy of $\G$. The associated inner automorphism is defined as
\begin{equation}
\label{eq:inner_automorphism}
\begin{aligned}
	&I_b\colon \G\ra \G,\\ 
	&I_b(g)=b(t(g))^{-1}g b(s(g)).
\end{aligned}
\qquad
\vcenter{\hbox{
	\begin{tikzcd}
		\G & \G \\
		M & M
		\arrow["{I_b}", from=1-1, to=1-2]
		\arrow[shift left, from=1-1, to=2-1]
		\arrow[shift right, from=1-1, to=2-1]
		\arrow[shift left, from=1-2, to=2-2]
		\arrow[shift right, from=1-2, to=2-2]
		\arrow["\varphi", from=2-1, to=2-2]
	\end{tikzcd}
}}
\end{equation}
As portrayed in the diagram, $I_b$ is a Lie groupoid automorphism covering $\varphi$ on the base. The map $\Ad_b\coloneq(I_b)_*\colon A\ra A$ denotes the induced Lie algebroid morphism over $\varphi$, and $\exp(\lambda\alpha)\colon M\ra \G$ denotes the bisection $\exp(\lambda\alpha)(x)=\phi^{\smash{\alpha^L}}_\lambda(1_x)$.

\Addresses

\end{document}